\newtheorem{Thm}{Theorem}[section]
\newtheorem{Lem}[Thm]{Lemma}
\newtheorem{Prop}[Thm]{Proposition}
\newtheorem{Cons}[Thm]{Construction}
\newtheorem{Ass}[Thm]{Assumption}
\newtheorem{Alg}[Thm]{Algorithm}
\newtheorem{Cor}[Thm]{Corollary}
\theoremstyle{definition}
\newtheorem{Def}[Thm]{Definition}
\newtheorem{Rem}[Thm]{Remark}
\newtheorem{Ex}[Thm]{Example}
\numberwithin{equation}{section}
\def\into{\hookrightarrow}
\def\longinto{\lhook\joinrel\longrightarrow}
\newbox\mybox
\def\ontoover#1{\mathrel{
       \setbox\mybox=\hbox spread 1.4em{\hfil$\scriptstyle#1$\hfil}
       \vbox{\offinterlineskip\copy\mybox
             \hbox to\wd\mybox{\rightarrowfill\hskip-2.8mm
                               $\rightarrow$}}}}
\newcommand{\BA}{{\mathbb{A}}}
\newcommand{\BF}{{\mathbb{F}}\,\!{}}
\newcommand{\BN}{{\mathbb{N}}}
\newcommand{\BP}{{\mathbb{P}}}
\newcommand{\BQ}{{\mathbb{Q}}}
\newcommand{\BZ}{{\mathbb{Z}}}
\newcommand{\Fn}{{\mathfrak{n}}}
\newcommand{\Fp}{{\mathfrak{p}}}
\newcommand{\Fr}{{\mathfrak{r}}}
\newcommand{\MCD}{{\mathcal{D}}}
\newcommand{\MCG}{{\mathcal{G}}}
\newcommand{\MCO}{{\mathcal{O}}}
\newcommand{\MCS}{{\mathcal{S}}}
\newcommand{\MCT}{{\mathcal{T}}}
\newcommand{\MCX}{{\mathcal{X}}}
\newcommand{\MCY}{{\mathcal{Y}}}
\DeclareMathOperator{\disc}{disc}
\DeclareMathOperator{\rdisc}{rdisc}
\DeclareMathOperator{\Ver}{V}
\DeclareMathOperator{\Edg}{E}
\DeclareMathOperator{\Nbs}{Nbs}
\DeclareMathOperator{\degree}{deg}
\DeclareMathOperator{\trd}{trd}
\DeclareMathOperator{\nrd}{nrd}
\DeclareMathOperator{\odd}{odd}
\DeclareMathOperator{\one}{one}
\DeclareMathOperator{\diam}{diam}
\DeclareMathOperator{\sgn}{sgn}
\newcommand{\GL}{{\rm GL}}
\newcommand{\SL}{{\rm SL}}
\newcommand{\PGL}{{\rm PGL}}
\newcommand{\CT}{\mathcal{T}}
\newcommand{\CX}{\mathcal{X}}
\newcommand{\CY}{\mathcal{Y}}
\newcommand{\CS}{\mathcal{S}}
\newcommand{\CC}{\mathcal{C}}
\newcommand{\CG}{\mathcal{G}}
\newcommand{\CO}{\mathcal{O}}
\newcommand{\eps}{\varepsilon}
\newcommand{\Aa}{A}
\newcommand{\Stab}{\mathop{\rm Stab}\nolimits}
\newcommand{\End}{\mathop{\rm End}\nolimits}
\newcommand{\Hom}{\mathop{\rm Hom}\nolimits}
\def\Leg#1#2{\left(\frac{#1}{#2}\right)} 				
\def\QuatAlg#1#2#3{\left(\frac{#1,#2}{#3}\right)}		
\newcommand{\BFq}{{\BF_q}}
\newcommand{\Ulambda}{\underline{\lambda}}
\newcommand{\Ux}{\underline{x}}
\DeclareMathOperator{\EP}{PE}
\DeclareMathOperator{\Frob}{Frob}
\DeclareMathOperator{\vnf}{vnf}
\begin{document}

\title{On computing quaternion quotient graphs for function fields}
 
\author{\sc Gebhard B\"ockle, Ralf Butenuth}

\maketitle

\section*{R\'{e}sum\'{e}}
Soit $\Lambda$ un  $\BFq[T]$-ordre maximal d'un corps de quaternions sur $\BFq(T)$ non-ramifi\'e \`a la place $\infty$. Cet article donne un algorithme pour calculer un domaine fondamental de l'action  de groupe des unit\'es $\Lambda^*$ sur l'abre de Bruhat-Tits $\CT$ associ\'ee \`a $\PGL_2(\BFq((1/T)))$, l'action qui est un analogue en corps de functions de l'action d'un groupe cocompact Fuchsian sur le demi-plan superieur. L'algorithme donne \'egalement une pr\'esentation explicit du groupe $\Lambda^*$ par g\'en\'erateurs et relations. En outre nous trouvons une borne sup\'erieure pour le temps de calcul en utilisant que le graphe quotient $\Lambda^*\backslash\CT$ est {\em pr\`esque} Ramanujan.

\section*{Abstract}
Let $\Lambda$ be a $\BFq[T]$-maximal order in a division quaternion algebra over $\BFq(T)$ which is split at the place $\infty$.  The present article gives an algorithm to compute a fundamental domain for the action of the group of units $\Lambda^*$ on the Bruhat-Tits tree $\CT$ associated to $\PGL_2(\BFq((1/T)))$. This action is a function field analog of the action of a co-compact Fuchsian group on the upper half plane. The algorithm also yields an explicit presentation of the group $\Lambda^*$ in terms of generators and relations. Moreover we determine an upper bound for its running time using that $\Lambda^*\backslash\CT$ is {\em almost} Ramanujan.

\bigskip

\section{Introduction}
\label{Sec-Intro}

A major recent theme in explicit arithmetic geometry over $\BQ$ or more general number fields has been the development and implementation of algorithms to compute automorphic forms \cite{Cremona,Dembele,GreenbergVoight,GunnelsYasaki,Stein}. More precisely, these algorithms compute the Hecke action on spaces of modular forms for a given level and weight. Typically these algorithms proceed in three steps: (i) a combinatorial or geometric model is provided in which one can compute the Hecke action; (ii) on the model one performs some precomputations such as the computations of ideal classes of a maximal order in a quaternion division algebra, or the computation of a fundamental domain; (iii) on the data provided by~(ii) one implements the Hecke action.

The present article is concerned with an analogous algorithm over function fields whose ultimate goal is the computation of Drinfeld modular forms as well as automorphic forms. For $\GL_2$ over function fields, (i) and (ii) were solved in \cite{Teitelbaum} and \cite{Teitelbaum2,GekelerNonnengard}, respectively. Here we will be concerned with inner forms of $\GL_2$ that correspond to the unit group of a quaternion division algebra split at $\infty$. In this setting an extension of \cite{Teitelbaum} is part of \cite{ButenuthThesis}. The sought-for combinatorial description of the forms to be computed is given in terms of harmonic cocycles on  the Bruhat-Tits tree which are invariant under the action of an arithmetic subgroup $\Gamma$ defined from the division algebra. The main precomputation that makes up step (ii) is that of a fundamental domain of $\CT$ under the action of $\Gamma$. This can be thought of as an analog of \cite{Voight}. Due to the different underlying {\em geometry} the methods employed are completely different. 

To describe the output of our algorithm and some consequences note first that in our setting of a quaternion division algebra split at $\infty$, the quotient $\Gamma\backslash\CT$ is a finite graph. The fundamental domain with an edge pairing 
that we compute consists of the following data: 
\begin{enumerate}
\item  a finite subtree $Y\subset \CT$ whose image $\overline Y$ in $\Gamma\backslash\CT$ is a maximal spanning tree, i.e., $\overline Y$ is a tree such that adding any edge of $\Gamma\backslash\CT$ to it will create a cycle. 
\item \label{GluedEdges}for any edge $\bar e$ of $\Gamma\backslash\CT\setminus \overline Y$, an edge $e$ of $\CT$ connected to $Y$ that maps to $\bar e$ and a the gluing datum that connects the loose vertex of this edge via the action of $\Gamma$ to a vertex of~$Y$.
\end{enumerate}
Compared to output of \cite{Voight}, what we compute is the analog of fundamental domain together with a side pairing. As explained in \cite[\S~I.4]{Serre1}, this data yields a presentation of the group $\Gamma$ in terms of explicit generators and relations. Moreover the fundamental domain data computed provides an efficient reduction algorithm on the tree: to any edge it computes its representative in $Y'$, by which we mean the union of $Y$ with the edges in~(\ref{GluedEdges}). Reinterpreted in terms of group theory, a fundamental domain with an edge pairing yields an efficient algorithm to solve the word problem for~$\Gamma$.

Observing that a finite cover of $\Gamma\backslash\CT$ is a Ramanujan graph, yields a bound on the diameter of $\Gamma\backslash\CT$. This in turn we use to bound the complexity of our algorithm, to bound the size of $Y'$, and to bound the size of the representatives of $Y'$ in terms of a natural height on the $2\times 2$-matrices over the function field completed at $\infty$. Our main result is therefore the existence of an effective algorithm together with precise complexity bounds. An implementation can be obtained on request from the second author. 

A theoretical result on the size of a minimal generating set for $\Gamma$ and the (logarithmic) height of its generators was obtained by different methods in \cite{Papikian2}. The bound on the height here is better by a factor of $2$. The results in \cite{Papikian2} also prove the existence of an algorithm to compute a fundamental domain. An implementation or a detailed analysis of that algorithm have not yet been carried out. Both \cite{Papikian2} and the present article rely heavily on \cite{Papikian} by Papikian.

\smallskip

We conclude by a short overview of the article: in Sections~\ref{Sec-GraphTheory}, \ref{Sec-BT-Tree} and \ref{Sec-QuatAlg}, we recall basic notions and results from graph theory, on the Bruhat-Tits tree and from the theory of quaternion algebras. Section~\ref{Sec-QuatGraph} introduces the main object of this article, the action of $\Gamma$ on the Bruhat-Tits tree $\CT$, and states relevant results on the resulting quaternion graph $\Gamma\backslash\CT$. Our basic algorithm is presented in Section~\ref{Sec-Algo}, except that we use one unproved subroutine which is the content of the following Section~\ref{SecCompHom}. The final Sections~\ref{Sec-Presentations} and~\ref{Sec-Complex} present on the one hand the applications of the algorithm to the presentation of $\Gamma$ in terms of generators and relations and to the word problem, and on the other hand the complexity analysis of the algorithm based on the fact that $\Gamma\backslash\CT$ has a finite cover that is Ramanujan.

\medskip

{\bf Acknowledgments:} For several useful discussions we wish to thank Mihran Papikian and John Voight. We also want to heartily thank the anonymous referee whose many comments and suggestions greatly improved the readability of the present article.  During this work, the authors were supported by the Sonderforschungsbereich/Transregio 45 {\em Periods, Moduli Spaces and Arithmetic of Algebraic Varieties} and by the  DFG priority project SPP 1489. The implementation of the algorithm is based on the computer algebra system Magma \cite{Bosma}. 


\subsubsection*{Notation}
Throughout this article $K = \BFq(T)$ will denote the rational function field over $\BFq$. As usual, the infinite valuation $v_\infty$ on $K$ is defined by $v_\infty(\frac{f}{g}) = \deg(g) - \deg(f)$ for $f, g \in \Aa=\BF_q[T], g \neq 0$ and $v_\infty(0) = \infty$. 
Then $\pi=1/T$ is a uniformizer for $v_\infty$, the corresponding completion of $K$ is $K_\infty = \BFq((\pi))$ and we write $\CO_\infty$ for its ring of integers.

\begin{Rem}
The restriction that $q$ be odd is for the sake of simplicity of exposition. To treat the case that $q$ is even, one needs to redo all of Section~\ref{Sec-QuatAlg} with very little overlap with the odd case. Some changes are also necessary in Section~\ref{SecCompHom}. All other results hold independently of $q$ being even or odd. We have implemented our algorithm also for even $q$. Theoretical discussions on $q$ even are sketched in~\cite{ButenuthThesis}.
\end{Rem}

\section{Notions from graph theory}
\label{Sec-GraphTheory}

\begin{Def}\label{DefDirMulGraph}
\begin{enumerate}
\item A {\em (directed multi-)graph} $\MCG$ is a pair $(\Ver(\MCG), \Edg(\MCG))$ where $\Ver(\MCG)$ is a (possibly infinite) set and $\Edg(\MCG)$ is a subset of $\Ver(\CG)\times\Ver(\CG)\times\BZ_{\ge0}$ such that 
\begin{enumerate}
\item if $e = (v, v^\prime, i)$ lies in $\Edg(\MCG)$, then so does its opposite $e^\star = (v^\prime, v, i)$,
\item for any $(v,v')\in \Ver(\CG)\times\Ver(\CG)$, the set $\{i\in\BZ_{\ge0}\mid (v,v^\prime,i)\in\Edg(\CG)\}$ is a finite initial segment of $\BZ_{\ge0}$ of cardinality denoted by $n_{v,v'}$,
\item for any $v \in \Ver(\CG)$, the set $\Nbs(v) := \{ v^\prime \in \Ver(\MCG) \mid (v, v^\prime, 0) \in \Edg(\MCG) \}$ is finite.
\end{enumerate}
\end{enumerate}
\end{Def}

An element $v \in \Ver(\MCG)$ is called a {\em vertex}, an element $e \in \Edg(\MCG)$ is called an {\em (oriented) edge} and an element in $\Ver(\MCG) \sqcup \Edg(\MCG)$ is called a {\em simplex}. For each edge $e = (v, v^\prime, i) \in \Edg(\MCG)$ we call $o(e) := v$ the {\em origin} of $e$ and $t(e) := v^\prime$ the {\em target} of $e$. If there is only one edge between vertices $v$, $v^\prime$ of $\MCG$
we simply write $(v, v')$ instead of $(v, v', 0)$. Two vertices $v, v^\prime$ are called {\em adjacent}, if $\lbrace v, v^\prime \rbrace = \lbrace o(e), t(e) \rbrace$ for some edge~$e$.

\begin{Def}
An edge $e$ with $o(e) = t(e)$ is called a {\em loop}. A vertex $v$ with $\#\Nbs(v)=1$ is called {\em terminal}.
\end{Def}

Let $v, v^\prime \in \Ver(\MCG)$. A {\em path} from $v$ to $v^\prime$ is a finite subset $\lbrace e_1, \dots, e_k \rbrace$ of $\Edg(\MCG)$ such that $t(e_i) = o(e_{i + 1})$ for all $i = 1, \dots, k - 1$ and $o(e_1) = v, t(e_k) = v^\prime$. The integer $k$ is called the {\em length} of the path $\lbrace e_1, \dots, e_k \rbrace$. The {\em distance} from $v$ to $v^\prime$, denoted $d(v, v^\prime)$, is the minimal length among all paths from $v$ to $v^\prime$ (or $\infty$ if no such path exists). A path $\lbrace e_1, \dots, e_k \rbrace$ from $v$ to  $v^\prime$ without backtracking, i.e., such that for no $i$ we have $e_i=e_{i-1}^\star$, is called a {\em geodesic}. Note that the length of a geodesic need not be $d(v,v')$ but that $d(v,v')$ is attained for a geodesic.

A graph $\MCG$ is {\em connected} if for any two vertices $v, v^\prime \in \Ver(\MCG)$ there is a path from $v$ to $v^\prime$. A {\em cycle} of $\MCG$ is a geodesic from some vertex $v$ to itself. Therefore a loop is a cycle of length one. A graph $\MCG$ is {\em cycle-free} if it contains no cycles. A {\em tree} is a connected, cycle-free graph. If $\MCG$ is a tree, then any two vertices of $\CG$ are connected by a unique geodesic. 

A {\em subgraph} $\MCG^\prime \subseteq \MCG$ is a graph $\CG'$ such that $\Ver(\MCG^\prime) \subseteq \Ver(\MCG)$ and $\Edg(\MCG^\prime) \subseteq \Edg(\MCG)$. Any subgraph $\MCS \subseteq \MCG$ which is a tree is called {\em subtree}. A {\em maximal subtree} is a subtree which is maximal under inclusion among all subtrees of~$\CG$.

The {\em degree} of $v \in \Ver(\MCG)$ is 
$$\degree(v):=\degree_\CG(v) := \sum_{e\in\Edg(\CG):o(e)=v} n_{v,t(e)}.$$
Thus $v$ is terminal precisely if $\degree(v)=1$. A graph $\MCG$ is called {\em $k$-regular} if for all vertices $v \in \Ver(\MCG)$ we have $\degree(v) = k$. 

A graph $\MCG$ is {\em finite}, if $\# \Ver(\MCG) < \infty$. Then also $\#\Edg(\CG)<\infty$, since $\degree(v)$ is finite for all $v\in\Ver(\CG)$. The {\em diameter} of a (finite) graph $\MCG$ is 
$$\diam(\MCG) := \max_{v, v^\prime \in \Ver(G)} d(v, v^\prime).$$

\begin{Def}
The {\em first Betti number $h_1(\MCG)$} of a finite connected graph is
	$$h_1(\MCG) := \frac{\#\Edg(\MCG)}{2} - \#\Ver(\MCG) + 1.$$
\end{Def}

A graph $\MCG$ defines an abstract simplicial set. Its geometric realization is a topological space $|\MCG|$. For finite graphs one has $h_1(\MCG)=\dim_\BQ H_1(|\MCG|, \BQ)$, i.e., the Betti number counts the number of independent cycles of~$\MCG$.

\section{The Bruhat-Tits tree}
\label{Sec-BT-Tree}

In this section, we recall the definition of the Bruhat-Tits tree for the group $\PGL_2(K_\infty)$. It is an important combinatorial object for the arithmetic of $K$. The material can be found in \cite{Serre1}.

\smallskip

One defines a graph $(\Ver(\CT),\Edg(\CT))$ as follows: Two $\CO_\infty$-lattices $L,L'\subset K_\infty^2$ are called equivalent if there is a $\lambda \in K_\infty^*$ with $L'=\lambda L$. The set $\Ver(\CT)$ is the set of equivalence classes $[L]$ of such lattices. The set $\Edg(\CT)$ is the set of triples $([L],[L'])$ such that $L,L'\subset K_\infty^*$ are $\CO_\infty$-lattices with $\pi L\subsetneq L'\subsetneq L$, or equivalently such that $L'\subsetneq L$ and $L/L'\cong\BF_q$ as $\CO_\infty$-modules. In particular there is at most one edge between any two vertices. 

By \cite[\S~II.1]{Serre1} the graph $\CT=(\Ver(\CT),\Edg(\CT))$ is a $q+1$-regular tree -- recall that $q$ is the cardinality of the residue field of $K_\infty$. The group $\PGL_2(K_\infty)$ acts naturally on lattice classes by left multiplication $(g,[L])\mapsto [gL]$. This induces an action on $\CT$. Because the definitions are a special case of a general construction, one calls $\CT$ the {\em Bruhat-Tits tree for $\PGL_2(K_\infty)$}.

Let $e_1 = (1, 0)^t$ and $e_2 = (0, 1)^t$ be the standard basis of $K_\infty^2$, thought of as column vectors. Write $\CO_\infty^2$ for $\CO_\infty e_1\oplus\CO_\infty e_2$. The following result is well-known and straightforward, using the class equation and the transitive action of $\GL_2(K_\infty)$ on bases of $K_\infty^2$.
\begin{Prop}\label{PropBTTreeVertices}
The map 
\begin{eqnarray}
\phi: \GL_2(K_\infty) / \GL_2(\CO_\infty) K_\infty^* &\to& \Ver(\MCT) \nonumber \\
 A &\mapsto& \left[ A \CO_\infty^2 \right]   \nonumber
\end{eqnarray}
is a bijection of left $\GL_2(K_\infty)$-sets.
\end{Prop}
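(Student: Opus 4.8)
The plan is to check in turn that $\phi$ is well defined, $\GL_2(K_\infty)$-equivariant, surjective, and injective; the only structural input needed is that $\CO_\infty$ is a complete discrete valuation ring, hence a principal ideal domain. First, for well-definedness I would note that any $g\in\GL_2(\CO_\infty)$ is an $\CO_\infty$-linear automorphism of the free module $\CO_\infty^2$, so $g\CO_\infty^2=\CO_\infty^2$, while for a scalar $\lambda\in K_\infty^*$ the lattice $\lambda\CO_\infty^2$ is by definition equivalent to $\CO_\infty^2$; hence $Ag\lambda\,\CO_\infty^2=\lambda A\CO_\infty^2$ represents the same class as $A\CO_\infty^2$, and $\phi$ factors through the quotient by $\GL_2(\CO_\infty)K_\infty^*$ (recall $K_\infty^*$ sits inside $\GL_2(K_\infty)$ as the central scalar matrices). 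Equivariance is then immediate from the definition of the $\PGL_2(K_\infty)$-action on $\CT$: $\phi(hA)=[hA\CO_\infty^2]=h\cdot[A\CO_\infty^2]$ for $h\in\GL_2(K_\infty)$.

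For surjectivity I would fix a vertex $[L]$ and use that the lattice $L\subset K_\infty^2$ is a finitely generated torsion-free $\CO_\infty$-module, hence free, and of rank $2$ because $L$ spans $K_\infty^2$ over $K_\infty$; writing an $\CO_\infty$-basis of $L$ as the columns of a matrix $A$, one has $A\in\GL_2(K_\infty)$ and $A\CO_\infty^2=L$, so $\phi(A)=[L]$. For injectivity, suppose $[A\CO_\infty^2]=[A'\CO_\infty^2]$, say $A'\CO_\infty^2=\lambda A\CO_\infty^2$ with $\lambda\in K_\infty^*$. Since $\lambda$ is central, $g:=\lambda^{-1}A^{-1}A'$ satisfies $g\CO_\infty^2=\CO_\infty^2$; evaluating at the standard basis vectors shows the columns of $g$ lie in $\CO_\infty^2$, and likewise for $g^{-1}$ using $g^{-1}\CO_\infty^2=\CO_\infty^2$, whence $g\in\GL_2(\CO_\infty)$. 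Then $A'=A\cdot(\lambda g)\in A\,\GL_2(\CO_\infty)K_\infty^*$, so $A$ and $A'$ define the same coset; combined with equivariance this gives the asserted bijection of left $\GL_2(K_\infty)$-sets.

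I do not expect a genuine obstacle; the one step worth spelling out is the freeness of $L$ in the surjectivity argument, which is the structure theorem for finitely generated modules over the principal ideal domain $\CO_\infty$ (equivalently, the elementary divisor theorem applied to the inclusion $L\subset\CO_\infty^2$ after rescaling $L$ into $\CO_\infty^2$).
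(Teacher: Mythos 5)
Your proof is correct and follows exactly the route the paper indicates (it gives no detailed proof, only noting the result follows from the transitive action of $\GL_2(K_\infty)$ on bases of $K_\infty^2$ and the orbit--stabilizer argument): your surjectivity step is that transitivity via freeness of lattices over the discrete valuation ring $\CO_\infty$, and your injectivity step is the identification of the stabilizer of $[\CO_\infty^2]$ with $\GL_2(\CO_\infty)K_\infty^*$. Nothing is missing.
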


The map $\phi$ of the above proposition allows us to represent vertices of $\CT$ by elements of $\GL_2(K_\infty)$. Row-reduction to the echelon form of a matrix yields a standard representative in $\GL_2(K_\infty)$ as expressed by the following result.
\begin{Lem}\label{LemVertexNF}
Every class of $\GL_2(K_\infty) / \GL_2(\CO_\infty) K_\infty^*$ has a unique representative of the
form $$\begin{pmatrix} \pi^n & g \\ 0 & 1 \end{pmatrix}$$
with $n \in \BZ$ and $g \in K_\infty / \pi^n \CO_\infty$,  called its \em{vertex normal form}.
\end{Lem}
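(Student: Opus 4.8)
The plan is to obtain both existence and uniqueness of the normal form by elementary column reduction over the discrete valuation ring $\CO_\infty$. The organizing observation is that moving within a coset of $\GL_2(\CO_\infty)K_\infty^*$ amounts to performing column operations with coefficients in $\CO_\infty$ (right multiplication by an element of $\GL_2(\CO_\infty)$) together with rescaling by an element of the central subgroup $K_\infty^*$.

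For existence, I would start from an arbitrary $A\in\GL_2(K_\infty)$, whose bottom row $(c,d)$ is nonzero since $A$ is invertible. After possibly swapping the two columns --- a permutation matrix lies in $\GL_2(\CO_\infty)$ --- I may assume $v_\infty(d)\le v_\infty(c)$, so that $c/d\in\CO_\infty$ and the column operation $C_1\mapsto C_1-(c/d)C_2$ clears the $(2,1)$-entry. Rescaling by $d^{-1}\in K_\infty^*$ normalizes the $(2,2)$-entry to $1$, producing $\begin{pmatrix} a' & g \\ 0 & 1\end{pmatrix}$ with $a'\neq 0$ because $\det A\neq 0$. Writing $a'=u\pi^n$ with $u\in\CO_\infty^*$ and $n\in\BZ$ and multiplying the first column by $u^{-1}$ makes the $(1,1)$-entry equal to $\pi^n$. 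Finally, since $C_2\mapsto C_2+\gamma C_1$ adds $\gamma\pi^n$ to the $(1,2)$-entry for any $\gamma\in\CO_\infty$, I may replace $g$ by any chosen representative of its class in $K_\infty/\pi^n\CO_\infty$; this exhibits a representative of the required shape in the coset of $A$.

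For uniqueness, I would suppose $\begin{pmatrix}\pi^n & g\\ 0 & 1\end{pmatrix}$ and $\begin{pmatrix}\pi^{n'} & g'\\ 0 & 1\end{pmatrix}$ lie in the same coset. Either by comparing the two matrices directly, or (via Proposition~\ref{PropBTTreeVertices}) by noting that the associated lattices differ by a scalar $\mu\in K_\infty^*$, I would first observe that both lattices surject onto $\CO_\infty$ under projection to the second coordinate, which forces $\mu\CO_\infty=\CO_\infty$, i.e.\ $\mu\in\CO_\infty^*$. Intersecting each lattice with $K_\infty\times\{0\}$ yields $\pi^n\CO_\infty\times\{0\}$ and $\pi^{n'}\CO_\infty\times\{0\}$ respectively, and since $\mu$ is a unit these coincide, so $n=n'$. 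Expressing $(g',1)^t$ as an $\CO_\infty$-linear combination of $\mu\cdot(\pi^n,0)^t$ and $\mu\cdot(g,1)^t$ and comparing coordinates then gives $g'-g\in\pi^n\CO_\infty$, so the two matrices agree once $g$ is viewed in $K_\infty/\pi^n\CO_\infty$.

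I expect no serious obstacle: the argument is entirely routine. The only points that require care are that $g$ is pinned down \emph{only} modulo $\pi^n\CO_\infty$ --- matching exactly the residual column operation $C_2\mapsto C_2+\gamma C_1$ with $\gamma\in\CO_\infty$ --- and that the scalar relating two normal forms in one coset is forced to be a unit of $\CO_\infty$, which is precisely what rigidifies both the integer $n$ and the class of $g$.
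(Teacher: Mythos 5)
Your argument is correct and is exactly the reduction the paper has in mind: the lemma is stated there without a written proof, being attributed to ``row-reduction to the echelon form,'' and your column-operation version (right multiplication by $\GL_2(\CO_\infty)$ plus scaling by $K_\infty^*$) is the precise form of that reduction for these right cosets. Both the existence step and the lattice-theoretic uniqueness step (pinning down $\mu\in\CO_\infty^*$, then $n$, then $g$ modulo $\pi^n\CO_\infty$) are sound and match the intended routine verification.
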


We also need a criterion for adjacency for matrices in vertex normal form.
\begin{Lem}\label{LemBTTreeVertMatr}
Consider the two matrices in vertex normal form
$$A := \begin{pmatrix} \pi^n & g \\ 0 & 1 \end{pmatrix}, 
B := \begin{pmatrix} \pi^{n + 1} & g + \alpha \pi^n \\ 0 & 1 \end{pmatrix}$$
with $n \in \BZ, \alpha \in \BFq, g \in K_\infty / \pi^n \CO_\infty$ and let $L_1$ and $L_2$ be the 
two lattices $$L_1 :=A\CO_\infty^2, L_2 := B\CO_\infty^2.$$
Then $L_1 \supset L_2$ and $L_1 / L_2 \cong \BFq$.
\end{Lem}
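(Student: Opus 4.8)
The plan is to reduce the statement to a single local computation over $\CO_\infty$. First I would write $L_1 = A\CO_\infty^2$ and, using $B = A\cdot(A^{-1}B)$, note that $L_2 = A\cdot(A^{-1}B)\CO_\infty^2$. Since $A^{-1} = \begin{pmatrix}\pi^{-n} & -\pi^{-n}g\\ 0 & 1\end{pmatrix}$, a direct multiplication gives
\[
A^{-1}B \;=\; \begin{pmatrix}\pi & \alpha\\ 0 & 1\end{pmatrix} \;=:\; M,
\]
so that $L_2 = A\,M\CO_\infty^2$. Now $A\in\GL_2(K_\infty)$ is in particular an $\CO_\infty$-linear automorphism of $K_\infty^2$, hence it restricts to an isomorphism of $\CO_\infty$-modules from $\CO_\infty^2$ onto $L_1$ carrying $M\CO_\infty^2$ onto $L_2$, and thus induces an isomorphism $\CO_\infty^2/M\CO_\infty^2 \isoto L_1/L_2$. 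Therefore it suffices to prove that $M\CO_\infty^2\subseteq\CO_\infty^2$ and $\CO_\infty^2/M\CO_\infty^2\cong\BFq$.

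Next I would carry out this local computation, which is elementary. The inclusion $M\CO_\infty^2\subseteq\CO_\infty^2$ holds because every entry of $M$ lies in $\CO_\infty$. For the quotient, observe that $M\CO_\infty^2$ is the $\CO_\infty$-span of $M e_1 = \pi e_1$ and $M e_2 = \alpha e_1 + e_2$. Hence in $\CO_\infty^2/M\CO_\infty^2$ the class of $e_2$ equals $-\alpha$ times the class of $e_1$, so the quotient is cyclic, generated by the image of $e_1$, which is annihilated by $\pi$; thus it is a quotient of $\CO_\infty/\pi\CO_\infty\cong\BFq$. It equals $\BFq$ because $\det M = \pi\notin\CO_\infty^*$ forces $M\CO_\infty^2\neq\CO_\infty^2$, so the quotient is nonzero (equivalently, its order is $q^{v_\infty(\det M)} = q$). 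Transporting back through $A$ yields $L_2\subseteq L_1$ and $L_1/L_2\cong\BFq$, as desired.

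The argument is mostly bookkeeping; the one point that deserves care is that $A$ need not lie in $\GL_2(\CO_\infty)$, so one may \emph{not} conclude $L_1 = \CO_\infty^2$. What is actually used is only the relative statement about the pair $(L_1, L_2)$, and for this the $\CO_\infty$-linear automorphism $A$ of $K_\infty^2$ suffices. If one prefers to avoid computing $A^{-1}B$, an equivalent route is to check directly that both columns of $B$ lie in $L_1$ — namely $\pi^{n+1}e_1 = \pi\,(\pi^n e_1)$ and $(g+\alpha\pi^n,1)^t = (g,1)^t + \alpha\,(\pi^n e_1)$ — together with $\pi L_1\subseteq L_2$, whence $L_1/L_2$ is a quotient $\BFq$-vector space of $L_1/\pi L_1\cong\BFq^2$ whose dimension is then pinned down to $1$ by the same index count. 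Either way no genuine obstacle arises.
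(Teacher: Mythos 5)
Your argument is correct. Note that the paper itself gives no proof of this lemma: like Proposition~\ref{PropBTTreeVertices} and Lemma~\ref{LemVertexNF}, it is stated as part of the standard, well-known description of the Bruhat-Tits tree (with \cite{Serre1} as the background reference), so there is no paper proof to compare against. Your reduction via $A^{-1}B=\left(\begin{smallmatrix}\pi&\alpha\\0&1\end{smallmatrix}\right)$, transporting the pair $(L_1,L_2)$ through the $\CO_\infty$-linear automorphism $A$ of $K_\infty^2$ and then reading off that $\CO_\infty^2/M\CO_\infty^2$ is cyclic, killed by $\pi$, and nonzero (by $\det M=\pi\notin\CO_\infty^*$, or by the elementary-divisor index count $[\CO_\infty^2:M\CO_\infty^2]=q^{v_\infty(\det M)}$), is exactly the elementary verification the authors intend, and your correctly flagged caveat that $A\notin\GL_2(\CO_\infty)$ in general is the only point where care is needed. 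The alternative route you sketch (checking the columns of $B$ lie in $L_1$ and that $\pi L_1\subseteq L_2$, then pinning the dimension of $L_1/L_2$ inside $L_1/\pi L_1\cong\BF_q^2$) is equally valid and in fact matches the description of edges of $\CT$ given at the start of Section~\ref{Sec-BT-Tree}.
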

\begin{Rem}
Lemma~\ref{LemBTTreeVertMatr} only displays $q$ vertices adjacent to $[L_1]$. The missing one is the class of $\begin{pmatrix} \pi^{n-1} & g \\ 0 & 1 \end{pmatrix}\CO_\infty^2.$ with $g$ now being replaced its class in $K_\infty / \pi^{n-1} \CO_\infty$. 
\end{Rem}

Figure~\ref{fig:BTTreeVertMatr} below illustrates the tree together with the matrices in normal form
corresponding to vertices. The identification is clear from the previous lemma. 
Note that each line in the picture symbolizes a whole fan expanding to the 
right. The elements $\alpha \in \BF_q^*$, $\beta \in \BFq$ agree on each fan.

\setlength{\unitlength}{0.75cm}

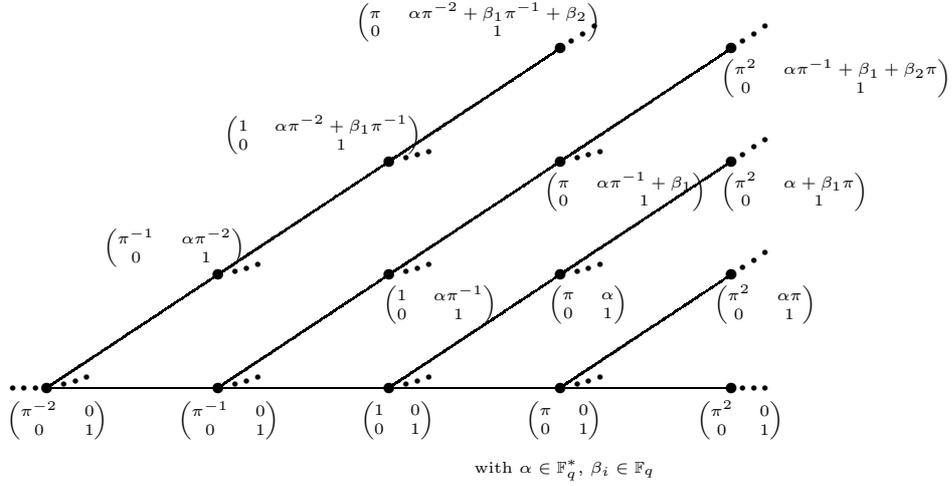
\begin{figure}[ht]
\centering
\begin{picture}(15,9)
\put(0.8,2.06){\circle*{0.08}}
\put(1,2.12){\circle*{0.08}}
\put(1.2,2.18){\circle*{0.08}}

\put(3.8,2.06){\circle*{0.08}}
\put(4,2.12){\circle*{0.08}}
\put(4.2,2.18){\circle*{0.08}}

\put(6.8,2.06){\circle*{0.08}}
\put(7,2.12){\circle*{0.08}}
\put(7.2,2.18){\circle*{0.08}}

\put(9.8,2.06){\circle*{0.08}}
\put(10,2.12){\circle*{0.08}}
\put(10.2,2.18){\circle*{0.08}}

\put(3.8,4.06){\circle*{0.08}}
\put(4,4.12){\circle*{0.08}}
\put(4.2,4.18){\circle*{0.08}}

\put(6.8,4.06){\circle*{0.08}}
\put(7,4.12){\circle*{0.08}}
\put(7.2,4.18){\circle*{0.08}}

\put(9.8,4.06){\circle*{0.08}}
\put(10,4.12){\circle*{0.08}}
\put(10.2,4.18){\circle*{0.08}}

\put(6.8,6.06){\circle*{0.08}}
\put(7,6.12){\circle*{0.08}}
\put(7.2,6.18){\circle*{0.08}}

\put(9.8,6.06){\circle*{0.08}}
\put(10,6.12){\circle*{0.08}}
\put(10.2,6.18){\circle*{0.08}}

\put(0.5,2){\circle*{0.2}}
\put(3.5,2){\circle*{0.2}}
\put(6.5,2){\circle*{0.2}}
\put(9.5,2){\circle*{0.2}}
\put(12.5,2){\circle*{0.2}}
\put(12.7,2){\circle*{0.08}}
\put(13.1,2){\circle*{0.08}}
\put(12.9,2){\circle*{0.08}}

\put(3.5,4){\circle*{0.2}}
\put(6.5,4){\circle*{0.2}}
\put(9.5,4){\circle*{0.2}}
\put(12.5,4){\circle*{0.2}}

\qbezier(0.5,2)(0.5,2)(3.5,4)
\qbezier(3.5,2)(3.5,2)(6.5,4)
\qbezier(6.5,2)(6.5,2)(9.5,4)
\qbezier(9.5,2)(9.5,2)(12.5,4)

\qbezier(3.5,4)(3.5,4)(6.5,6)
\qbezier(6.5,4)(6.5,4)(9.5,6)
\qbezier(9.5,4)(9.5,4)(12.5,6)

\qbezier(6.5,6)(6.5,6)(9.5,8)
\qbezier(9.5,6)(9.5,6)(12.5,8)


\put(6.5,6){\circle*{0.2}}
\put(9.5,6){\circle*{0.2}}
\put(12.5,6){\circle*{0.2}}

\put(9.5,8){\circle*{0.2}}
\put(12.5,8){\circle*{0.2}}

\put(12.7,8.13){\circle*{0.08}}
\put(12.9,8.26){\circle*{0.08}}
\put(13.1,8.39){\circle*{0.08}}

\put(12.7,6.13){\circle*{0.08}}
\put(12.9,6.26){\circle*{0.08}}
\put(13.1,6.39){\circle*{0.08}}

\put(12.7,4.13){\circle*{0.08}}
\put(12.9,4.26){\circle*{0.08}}
\put(13.1,4.39){\circle*{0.08}}

\put(9.7,8.13){\circle*{0.08}}
\put(9.9,8.26){\circle*{0.08}}
\put(10.1,8.39){\circle*{0.08}}



\put(0.3,2){\circle*{0.08}}
\put(0.1,2){\circle*{0.08}}
\put(-0.1,2){\circle*{0.08}}

\qbezier(0.5,2)(0.5,2)(3.5,2)
\qbezier(3.5,2)(3.5,2)(6.5,2)
\qbezier(6.5,2)(6.5,2)(9.5,2)
\qbezier(9.5,2)(9.5,2)(12.5,2)
\put(-0.2,1.35){\begin{tiny}$\begin{pmatrix} \pi^{-2} & 0 \\ 0 & 1 \end{pmatrix}$\end{tiny}}
\put(2.8,1.35){\begin{tiny}$\begin{pmatrix} \pi^{-1} & 0 \\ 0 & 1 \end{pmatrix}$\end{tiny}}
\put(6,1.35){\begin{tiny}$\begin{pmatrix} 1 & 0 \\ 0 & 1 \end{pmatrix}$\end{tiny}}
\put(8.9,1.35){\begin{tiny}$\begin{pmatrix} \pi & 0 \\ 0 & 1 \end{pmatrix}$\end{tiny}}
\put(11.9,1.35){\begin{tiny}$\begin{pmatrix} \pi^2 & 0 \\ 0 & 1 \end{pmatrix}$\end{tiny}}
\put(12.2,5.4){\begin{tiny}
$\begin{pmatrix} \pi^{2} & \alpha + \beta_1 \pi \\ 0 & 1 \end{pmatrix}$\end{tiny}}
\put(12.2,7.4){\begin{tiny}
$\begin{pmatrix} \pi^{2} & \alpha \pi^{-1} + \beta_1 + \beta_2 \pi \\ 0 & 1 \end{pmatrix}$\end{tiny}}

\put(9.1,5.4){\begin{tiny}
$\begin{pmatrix} \pi & \alpha \pi^{-1} + \beta_1 \\ 0 & 1 \end{pmatrix}$\end{tiny}}
\put(3.5,6.4){\begin{tiny}
$\begin{pmatrix} 1 & \alpha \pi^{-2} + \beta_1 \pi^{-1} \\ 0 & 1 \end{pmatrix}$\end{tiny}}
\put(5.8,8.4){\begin{tiny}
$\begin{pmatrix} \pi & \alpha \pi^{-2} + \beta_1 \pi^{-1} + \beta_2 \\ 0 & 1 \end{pmatrix}$\end{tiny}}

\put(12.2,3.4){\begin{tiny}$\begin{pmatrix} \pi^{2} & \alpha \pi \\ 0 & 1 \end{pmatrix}$\end{tiny}}
\put(9.3,3.4){\begin{tiny}$\begin{pmatrix} \pi & \alpha \\ 0 & 1 \end{pmatrix}$\end{tiny}}
\put(6.4,3.4){\begin{tiny}$\begin{pmatrix} 1 & \alpha \pi^{-1} \\ 0 & 1 \end{pmatrix}$\end{tiny}}
\put(1.5,4.4){\begin{tiny}$\begin{pmatrix} \pi^{-1} & \alpha \pi^{-2} \\ 0 & 1 \end{pmatrix}$\end{tiny}}

\put(8,0.5){\begin{tiny}with $\alpha \in \BF_q^*$, $\beta_i \in \BFq$ \end{tiny}} 
\end{picture}
\caption{The tree $\MCT$ with the corresponding matrices}
\label{fig:BTTreeVertMatr}

\end{figure}

Write $L(n, g)$ for the $\CO_\infty$-lattice $\langle v_1, v_2 \rangle_{\CO_\infty}$ 
where $v_1 = \begin{pmatrix} \pi^n \\ 0 \end{pmatrix}$ and $v_2 = \begin{pmatrix} g \\ 1 \end{pmatrix}$. 
Note that $L(n, g) = L(n, g^\prime)$ if and only if $g \equiv g^\prime \pmod{ \pi^n \CO_\infty}$.

\begin{Rem}\label{RemGeodesivToStandVertex}
 For $n \in \BZ$, $g \in K_\infty$ we define $$\delta:=\deg_n(g) := \min \lbrace i \in \BZ_{\ge0} \mid g \in \pi^{n - i} \CO_\infty \rbrace.$$
Then the path from $L(n, g)$ to $L(0,0)$ in $\MCT$ is 
%
%
%
 $$L(n, g)  \text{ ----- } L(n - 1, g) \text{ ----- } \dots  \text{ ----- }
L(n - \delta, g)= L(n - \delta, 0)\text{ --- }  $$ 
$$\text{ ----- } L(\sgn(n - \delta)\cdot (| n - \delta | - 1), 0)
\text{ ----- }\dots \text{ ----- } L(\sgn(n - \delta)\cdot 1, 0) \text{ ----- } L(0, 0)$$
In particular the distance between $L(n, g)$ and $L(0,0)$ is $\deg_n(g) + | n - \deg_n(g) |$.
\end{Rem}

\section{Quaternion algebras}
\label{Sec-QuatAlg}

We recall standard facts on quaternion algebras over $K=\BF_q(T)$ and over completions of $K$, and on orders over $\Aa=\BF_q[T]$.  We assume throughout that $q$ is odd. Our basic references are \cite[Kap.~IX]{Jantzen-Schwermer} and \cite{Vigneras}. Many results stated are true more generally. However, we confine ourselves to the case at hand.

\medskip

A quaternion algebra over a field $F$ is a central simple algebra of dimension $4$ over $F$. It is either  isomorphic to $M_2(F)$ or 
a division algebra. One has the following well-known construction of quaternion algebras.

\begin{Cons}\label{ConsQuatAlg}
For $a, b \in F^*$ one defines $\QuatAlg{a}{b}{F}$ as the $K$-algebra with $F$-basis $1, i, j, ij$ and relations
$i^2 = a, j^2 = b, ij=-ji$.
\end{Cons}
The relations can be expanded to a $4\times 4$ multiplication table for the given $F$-basis of $\QuatAlg{a}{b}{F}$. One shows that $\QuatAlg{a}{b}{F}$ defines a quaternion algebra over $F$, and that conversely any quaternion algebra over $F$ can be obtained via this construction for a suitable choice of $a,b\in F^*$.

Among other things, a quaternion algebra $D$ over $F$ carries a {\em reduced norm map} $\nrd\colon D\to F$ which defines a quadratic form on $D$. For $D= \QuatAlg{a}{b}{F}$ the reduced norm has the explicit expression
$$\nrd(\gamma) \ = \ \gamma\cdot\bar\gamma\,\,\,=\,\,\,\gamma_1^2-a^2\gamma_2^2-b^2\gamma_3^2+ab\gamma_4^2$$
for any $\gamma=\gamma_1+\gamma_2i+\gamma_3j+\gamma_4ij\in D$ with $(\gamma_1,\gamma_2,\gamma_3,\gamma_4)\in F^4$. If $D$ is isomorphic to $M_2(F)$, then $\nrd$ is simple the determinant map $\det\colon M_2(F)\to F$.

\medskip

Let now $D$ denote a quaternion algebra over $K$. Then $D_\Fp:=D\otimes_KK_\Fp$ is a quaternion algebra over the completion $K_\Fp$  for any place $\Fp$ of~$K$.

\begin{Def}\label{DefRamification}
$D$ is {\em ramified} at $\Fp$ if and only if $D_\Fp$ is a division algebra.  
\end{Def}

\begin{Def}\label{DefHilbertSymbol} The {\em Hilbert symbol} of a pair $(a, b) \in K^2$ at a place $\Fp$ is
$$(a,b)_{K_{\Fp}} := \begin{cases}+1 & \QuatAlg{a}{b}{K} \text{ is unramified at } 
\Fp\\ -1 & \QuatAlg{a}{b}{K} \text{ is ramified at } \Fp. \end{cases}$$
\end{Def}

\begin{Def}\label{DefLegrendeSymbol}
For $a\in{\Aa}$ and $\varpi$ an irreducible element of ${\Aa}$, the {\em Legendre symbol of $a$ at $\varpi$} is
$$\Leg{a}{\varpi} := \begin{cases} \phantom{-}1 & a \notin \varpi \Aa \text{ and } a \text{ is a square modulo } \varpi \\
				   -1 & a \text{ is a non-square modulo } \varpi \\
				     \phantom{-}0 & a\in \varpi\Aa.
                     \end{cases}
$$
\end{Def}

By adaptating to the function-field situation the proof of \cite[Ch.~III, Thm.~1]{Serre2}, the following result is straightforward.
\begin{Prop}\label{PropComputationHilbertSymbol}
Suppose $q$ is odd. Write $\Fp = (\varpi)$ and let $a = \varpi^{\alpha} u, b = \varpi^{\beta} v$ 
with $u, v \in O_{K_{\Fp}}^*, \alpha, \beta \in \BZ$ 
and let $\epsilon(\Fp) := \frac{q - 1}{2} \deg(\varpi) \pmod{2}$. Then
$$(a, b)_{K_\Fp} = (-1)^{\alpha \beta \epsilon(\Fp)} \Leg{u}{\varpi}^\beta \Leg{v}{\varpi}^\alpha.$$ 
\end{Prop}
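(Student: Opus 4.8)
The plan is to reduce the computation of the Hilbert symbol $(a,b)_{K_\Fp}$ to a finite list of cases according to the parities of the valuations $\alpha,\beta$, and in each case to match the sign predicted by the formula against a direct analysis of whether the quaternion algebra $\QuatAlg{a}{b}{K_\Fp}$ splits. First I would use bimultiplicativity and symmetry of the Hilbert symbol in its two arguments — standard properties of the norm form of a quaternion algebra, valid over any field of characteristic not $2$ — together with the fact that multiplying $a$ or $b$ by a square in $K_\Fp^*$ does not change the algebra. Since $K_\Fp^*/(K_\Fp^*)^2$ is generated by $\varpi$ and any fixed non-square unit, it suffices to verify the formula when each of $a,b$ is either a unit or $\varpi$ times a unit; the general case then follows formally because both sides of the asserted identity are bimultiplicative in $(a,b)$ modulo squares (one checks the right-hand side is bimultiplicative using that $\Leg{\cdot}{\varpi}$ is multiplicative and that $\epsilon(\Fp)$ absorbs the cross terms correctly).

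The core of the argument is then the unramified (residue characteristic odd) local computation, for which I would follow the proof of \cite[Ch.~III, Thm.~1]{Serre2} verbatim, substituting $\BF_q$ for the prime residue field. Concretely: (1) if $\alpha=\beta=0$, i.e. $a,b$ are units, then $\QuatAlg{a}{b}{K_\Fp}$ is always split — one shows the norm form $\langle 1,-a,-b,ab\rangle$ is isotropic over $K_\Fp$ by first solving $\bar a x^2 + \bar b y^2 = 1$ over the residue field $\BF_q$ (possible by a counting argument: each side takes $(q+1)/2$ values, so the two sets meet) and then lifting by Hensel's lemma; this matches the formula, whose right-hand side is $+1$. (2) If exactly one valuation is odd, say $a=u$ a unit and $b=\varpi v$, then the algebra splits iff $u$ is a square modulo $\varpi$, i.e. iff $\Leg{u}{\varpi}=1$; again this is the classical computation (the form $\langle 1,-u\rangle$ represents $b$ up to the norm group exactly when $u$ is a square residue), and it matches $\Leg{u}{\varpi}^\beta\Leg{v}{\varpi}^\alpha = \Leg{u}{\varpi}$ since $\alpha=0,\beta=1$, while $(-1)^{\alpha\beta\epsilon(\Fp)}=1$. (3) If $\alpha=\beta=1$, write $b = \varpi v$, and use $(a, b)_{K_\Fp} = (a, -a)_{K_\Fp}\cdot(a, -ab^{-1})_{K_\Fp}$ — more cleanly, use the identity $(\varpi u, \varpi v)_{K_\Fp} = (\varpi u, -uv)_{K_\Fp}\cdot(\varpi u,\varpi u)_{K_\Fp}$ together with $(x,x)=(x,-1)$ — to reduce to the case where the second argument is a unit $-uv$, giving $\Leg{-uv}{\varpi}$; then one expands $\Leg{-uv}{\varpi} = \Leg{-1}{\varpi}\Leg{u}{\varpi}\Leg{v}{\varpi}$ and observes that $\Leg{-1}{\varpi} = (-1)^{\frac{q-1}{2}\deg\varpi} = (-1)^{\epsilon(\Fp)}$ because $-1$ is a square in $\BF_{q^{\deg\varpi}}$ iff $q^{\deg\varpi}\equiv 1 \bmod 4$. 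This last identity is exactly the analogue of $\Leg{-1}{p}=(-1)^{(p-1)/2}$ and is the one genuinely function-field-flavoured input.

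I expect the main obstacle to be purely bookkeeping: organizing the reduction so that the exponents $\alpha\beta\epsilon(\Fp)$, $\beta$ on $\Leg{u}{\varpi}$, and $\alpha$ on $\Leg{v}{\varpi}$ come out with the correct parity in the mixed case (3), where one has to be careful that $\Leg{u}{\varpi}$ may be $0$ when $u\in\varpi\Aa$ — but this cannot happen here since $u,v$ are units at $\Fp$, so the Legendre symbols are $\pm1$ and the formula is unambiguous. The only subtlety requiring thought, rather than routine checking, is the evaluation $\Leg{-1}{\varpi}=(-1)^{\epsilon(\Fp)}$: one notes $-1$ is a square mod $\varpi$ exactly when the multiplicative group $\BF_{q^{\deg\varpi}}^*$, of order $q^{\deg\varpi}-1$, has even $2$-adic valuation $\geq 2$... more simply, iff $4 \mid q^{\deg\varpi}-1$, and a short congruence computation shows this is equivalent to $\frac{q-1}{2}\deg\varpi$ being even when $q$ is odd. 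Everything else is a faithful transcription of Serre's number-field proof, so I would not reproduce it in full but cite \cite[Ch.~III, Thm.~1]{Serre2} and indicate only the substitutions ($\BZ_p \rightsquigarrow \CO_{K_\Fp}$, residue field $\BF_p \rightsquigarrow \BF_{q^{\deg\varpi}}$, and the above computation of $\Leg{-1}{\varpi}$).
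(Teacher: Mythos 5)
Your proposal is correct and is essentially the paper's own proof: the paper disposes of this proposition with the single remark that one adapts the proof of \cite[Ch.~III, Thm.~1]{Serre2} to the function-field setting, and your case analysis (unit--unit split by counting plus Hensel, mixed case via the unramified quadratic extension, both-odd case reduced by $(a,b)=(a,-ab)$) together with the evaluation $\Leg{-1}{\varpi}=(-1)^{\frac{q-1}{2}\deg\varpi}$, equivalently $4\mid q^{\deg\varpi}-1$, is exactly that adaptation. One small caveat: bimultiplicativity of the Hilbert symbol is a local-field fact rather than a property valid over arbitrary fields of odd characteristic, but since you work over $K_\Fp$ (and since the reduction really only needs invariance modulo squares together with the identity $(a,b)=(a,-ab)$), this does not affect the argument.
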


Let $R$ denote the set of all ramified places of~$D$. Then \cite[Lem.~III.3.1 and Thme.~III.3.1]{Vigneras} yields the following.
\begin{Prop}\label{PropRamifiedFiniteEven}
The cardinality of $R$ is finite and even and $D$ is up to isomorphism uniquely determined by $R$. The set $R$ is empty if and only if $D\cong M_2(K)$.
\end{Prop}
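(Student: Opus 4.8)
The plan is to reduce the global statement to the combination of three inputs: the local classification of quaternion algebras (every $D_\Fp$ is either $M_2(K_\Fp)$ or the unique division quaternion algebra over $K_\Fp$, so being ramified at $\Fp$ is a dichotomy), Hilbert reciprocity for $K=\BF_q(T)$, and the injectivity of the localization map on the $2$\nobreakdash-torsion of the Brauer group (the Albert--Brauer--Hasse--Noether theorem for the global field $K$). Concretely, I would fix an isomorphism $D\cong\QuatAlg{a}{b}{K}$ with $a,b\in K^*$, which exists by Construction~\ref{ConsQuatAlg}, and read off the set $R$ from the Hilbert symbols $(a,b)_{K_\Fp}$.

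\emph{Finiteness and evenness of $R$.} First I would observe that at any place $\Fp$ at which both $a$ and $b$ are units, Proposition~\ref{PropComputationHilbertSymbol} (applied at finite places, and its evident analog at $\infty$ with uniformizer $\pi=1/T$) gives $\alpha=\beta=0$, hence $(a,b)_{K_\Fp}=+1$, so $\Fp\notin R$. Since $a$ and $b$ have only finitely many zeros and poles among the places of $K$, this leaves only finitely many candidate places, proving $\#R<\infty$. For the parity I would invoke Hilbert reciprocity, $\prod_\Fp (a,b)_{K_\Fp}=+1$, the product over all places of $K$ including $\infty$ (finite by the previous remark); as $(a,b)_{K_\Fp}=-1$ exactly for $\Fp\in R$, the product equals $(-1)^{\#R}$, so $\#R$ is even. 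In the function-field setting this reciprocity law can itself be proved by feeding the degree relation $\sum_\Fp\deg(\Fp)\,v_\Fp(f)=0$ into Proposition~\ref{PropComputationHilbertSymbol}, adapting \cite[Ch.~III]{Serre2}; alternatively one simply cites \cite[Ch.~III]{Vigneras}.

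\emph{Uniqueness and the characterization of $M_2(K)$.} A quaternion algebra over $K$ represents a class of order dividing $2$ in $\mathrm{Br}(K)$, whose local invariant at $\Fp$ is $0$ if $\Fp\notin R$ and $\tfrac12$ if $\Fp\in R$. If $D$ and $D'$ are quaternion algebras with the same ramification set $R$, their classes have the same image in $\bigoplus_\Fp\mathrm{Br}(K_\Fp)$; by the injectivity of $\mathrm{Br}(K)\to\bigoplus_\Fp\mathrm{Br}(K_\Fp)$ they are Brauer equivalent, and being both central simple of dimension $4$ over $K$ they are isomorphic. This gives the asserted uniqueness (and the converse, that every finite even set of places is realized, follows from the surjectivity of $\bigoplus_\Fp\mathrm{Br}(K_\Fp)\to\BQ/\BZ$ restricted to $2$\nobreakdash-torsion, though it is not needed for the statement). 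Finally, if $D\cong M_2(K)$ then $D_\Fp\cong M_2(K_\Fp)$ for all $\Fp$, so $R=\emptyset$; conversely, if $R=\emptyset$ then $D$ has trivial local invariant everywhere, hence trivial class in $\mathrm{Br}(K)$ by the same injectivity, and a $4$\nobreakdash-dimensional central simple $K$-algebra with trivial Brauer class is $M_2(K)$.

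\emph{Main obstacle.} All the real content sits in the two global facts used as black boxes: Hilbert reciprocity over $\BF_q(T)$ and the Hasse principle for central simple algebras over $K$ (injectivity, and for the existence part surjectivity, of the localization map on Brauer groups). I would not reprove these here but cite them from \cite{Vigneras} (resp.\ adapt \cite{Serre2}); everything else is bookkeeping with Proposition~\ref{PropComputationHilbertSymbol} and the local dichotomy ``$M_2$ or division''.
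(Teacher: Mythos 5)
Your proposal is correct and rests on exactly the machinery the paper itself relies on: the paper gives no argument for this proposition but simply cites \cite[Lem.~III.3.1 and Thme.~III.3.1]{Vigneras}, and those results are established by the same combination of the local dichotomy, Hilbert reciprocity and the Hasse principle for the Brauer group that you invoke (and likewise defer to the literature). So your write-up is a faithful unpacking of the cited classification rather than a genuinely different route.
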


The ideal $\Fr := \prod_{\Fp \in R} \Fp$ of ${\Aa}$ is called the {\em discriminant} of $D$. Let $r\in{\Aa}$ be the monic generator of $\Fr$.

\begin{Ass}\label{MainAss} For the remainder of this article, we assume that $D$ is a division quaternion algebra which is unramified at $\infty$, , i.e., that $D$ is an indefinite quaternion algebra over ${\Aa}$. We also fix an isomorphism $D_\infty\cong M_2(K_\infty)$.
\end{Ass}

Let $\Lambda$ be an order of $D$ over ${\Aa}$. It is free over ${\Aa}$ of rank $4$ and so we may choose a basis $f_1,\ldots,f_4$. The ideal generated by 
$$\disc(f_1, \dots, f_4) := \det(\trd(f_i f_j))_{i, j = 1, \dots, 4}$$
is independent of the chosen basis. By \cite[Lem.~I.4.7]{Vigneras}, this ideal is a square and one defines the {\em reduced discriminant} $\rdisc(\Lambda)$ of $\Lambda$ as the square root of this ideal. One deduces a criterion for an order to be maximal, see \cite[Cor.~III.5.3]{Vigneras}.
\begin{Prop}\label{DiscCritForMaxOrder}
An ${\Aa}$-order $\Lambda$ is maximal in $D$ if and only if $\rdisc(\Lambda)=\Fr$.
\end{Prop}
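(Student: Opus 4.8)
The plan is to reduce this global statement to a purely local computation at each finite place of $K$. Both sides behave well under localization: $\Lambda$ is maximal in $D$ if and only if $\Lambda_\Fp := \Lambda\otimes_{\Aa}\Aa_\Fp$ is a maximal order in $D_\Fp$ over the complete discrete valuation ring $\Aa_\Fp$ for every finite place $\Fp$, and $\rdisc(\Lambda)\Aa_\Fp=\rdisc(\Lambda_\Fp)$ because an $\Aa$-basis of $\Lambda$ is also an $\Aa_\Fp$-basis of $\Lambda_\Fp$ and the determinant $\disc(f_1,\dots,f_4)$ defining the reduced discriminant is unchanged by base change. Since $\Fr\Aa_\Fp=\Fp\Aa_\Fp$ for $\Fp\in R$ and $\Fr\Aa_\Fp=\Aa_\Fp$ for $\Fp\notin R$, it suffices to prove the following local statement: an $\Aa_\Fp$-order $\Lambda_\Fp\subset D_\Fp$ is maximal if and only if $\rdisc(\Lambda_\Fp)$ equals $\Fp\Aa_\Fp$ when $D_\Fp$ is a division algebra, and equals $\Aa_\Fp$ when $D_\Fp\cong M_2(K_\Fp)$.

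First I would record the monotonicity of the reduced discriminant. If $\Lambda_\Fp\subseteq\Lambda_\Fp'$ are $\Aa_\Fp$-orders, writing a basis of $\Lambda_\Fp$ in terms of one of $\Lambda_\Fp'$ by a matrix $M\in M_4(\Aa_\Fp)$ gives $G_{\Lambda_\Fp}=M^{t}\,G_{\Lambda_\Fp'}\,M$ for the two Gram matrices $G_\bullet=(\trd(f_if_j))$, hence $\disc(\Lambda_\Fp)=(\det M)^2\disc(\Lambda_\Fp')$; taking square roots of ideals in the discrete valuation ring $\Aa_\Fp$ gives $\rdisc(\Lambda_\Fp)=[\Lambda_\Fp':\Lambda_\Fp]\cdot\rdisc(\Lambda_\Fp')$, where $[\Lambda_\Fp':\Lambda_\Fp]=(\det M)$ is the module index. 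Combined with the fact that every $\Aa_\Fp$-order is contained in a maximal one (see \cite{Vigneras}), this reduces the local statement to computing $\rdisc$ of a single maximal $\Aa_\Fp$-order in each of the two cases. Indeed, once that value $\Fd_\Fp$ is known: a maximal order coincides with every order containing it, so its reduced discriminant is $\Fd_\Fp$; conversely, if $\rdisc(\Lambda_\Fp)=\Fd_\Fp$ and $\Lambda_\Fp\subseteq\Lambda_\Fp'$ with $\Lambda_\Fp'$ maximal, the displayed formula forces $[\Lambda_\Fp':\Lambda_\Fp]=\Aa_\Fp$, so $\Lambda_\Fp=\Lambda_\Fp'$ is maximal.

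It remains to carry out the two computations. If $D_\Fp\cong M_2(K_\Fp)$, then $M_2(\Aa_\Fp)$ is a maximal $\Aa_\Fp$-order and every maximal order is $\GL_2(K_\Fp)$-conjugate to it; using the basis $e_{11},e_{12},e_{21},e_{22}$ and $\trd=\operatorname{tr}$, the Gram matrix has determinant $-1$, so $\disc(M_2(\Aa_\Fp))=\Aa_\Fp$ and $\rdisc(M_2(\Aa_\Fp))=\Aa_\Fp$. If $D_\Fp$ is a division algebra, write $D_\Fp=L\oplus L\Pi$, where $L/K_\Fp$ is the unramified quadratic extension, $\Pi^2=\pi$ is a uniformizer of $K_\Fp$, and $\Pi x=\bar x\Pi$ for $x\in L$; then $\CO_{D_\Fp}:=\CO_L\oplus\CO_L\Pi$ is the unique maximal $\Aa_\Fp$-order. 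Taking the $\Aa_\Fp$-basis $1,u,\Pi,u\Pi$ with $\CO_L=\Aa_\Fp[u]$, $X^2-tX+n$ the minimal polynomial of $u$, and using that $\trd$ vanishes on $L\Pi$, the Gram matrix is block diagonal with blocks $\begin{pmatrix}2 & t\\ t & t^2-2n\end{pmatrix}$ and $\pi\begin{pmatrix}2 & t\\ t & 2n\end{pmatrix}$, so $\disc(\CO_{D_\Fp})$ is generated by $(t^2-4n)^2\pi^2$; since $t^2-4n=\disc(\CO_L/\Aa_\Fp)$ is a unit ($L/K_\Fp$ unramified, and here $q$ odd is convenient), we get $\disc(\CO_{D_\Fp})=\pi^2\Aa_\Fp$ and $\rdisc(\CO_{D_\Fp})=\Fp\Aa_\Fp$. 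Assembling the local reduced discriminants over all finite $\Fp$ then gives $\rdisc(\Lambda)=\Fr$ exactly when $\Lambda_\Fp$ is maximal for every $\Fp$, i.e.\ exactly when $\Lambda$ is maximal.

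The main obstacle is not any individual computation but assembling the local structure theory of quaternion algebras over $K_\Fp$: that over the complete discrete valuation ring $\Aa_\Fp$ every order is contained in a maximal one; that in the split case the maximal orders are the $\GL_2(K_\Fp)$-conjugates of $M_2(\Aa_\Fp)$; and that a division quaternion algebra over $K_\Fp$ has $\CO_L\oplus\CO_L\Pi$ as its unique maximal order. These inputs would be imported from \cite{Vigneras} rather than reproved here; the reduced-discriminant bookkeeping and the two Gram-matrix computations are then routine.
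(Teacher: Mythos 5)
Your proof is correct. Note that the paper does not actually prove this proposition: it simply quotes it as \cite[Cor.~III.5.3]{Vigneras}, so there is no ``paper proof'' to compare against step by step. What you have written is essentially the standard argument underlying that citation: reduce maximality and the reduced discriminant to the completions $\Aa_\Fp$, use the index formula $\disc(\Lambda_\Fp)=[\Lambda_\Fp':\Lambda_\Fp]^2\disc(\Lambda_\Fp')$ together with the fact that every local order sits inside a maximal one, and then compute $\rdisc$ of one maximal order in each local case ($M_2(\Aa_\Fp)$ in the split case, $\CO_L\oplus\CO_L\Pi$ in the ramified case). Both Gram-matrix computations check out (determinant $-1$ in the split case, $-(t^2-4n)^2\pi^2$ in the division case), and the inputs you import from \cite{Vigneras} --- existence of maximal orders, conjugacy of maximal orders in $M_2(K_\Fp)$, uniqueness of the maximal order in the local division algebra --- are exactly the structure theory the cited corollary rests on, so nothing is circular. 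Two minor remarks: the hypothesis that $q$ is odd is not actually needed for $t^2-4n$ to be a unit (the discriminant of the monogenic maximal order of an unramified extension is always a unit), though it is harmless under the paper's standing assumption; and when you pass between localizations and completions you should say once that maximality and $\rdisc$ are insensitive to this, which is standard.
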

Since $D$ is split at infinity and $K$ has class number $1$, \cite[Cor.~III.5.7]{Vigneras} yields:
\begin{Prop}
All maximal ${\Aa}$-orders $\Lambda$ in~$D$ are conjugate under $D^*$.
\end{Prop}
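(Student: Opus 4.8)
**Proof strategy for Proposition 3.? (All maximal $\Aa$-orders $\Lambda$ in $D$ are conjugate under $D^*$).**

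The plan is to reduce the global statement to a local-global principle together with the class number computation, exactly as in \cite[Cor.~III.5.7]{Vigneras}, but to spell out the ingredients in the function-field setting. Let $\Lambda$ and $\Lambda'$ be two maximal $\Aa$-orders in $D$. For each finite place $\Fp$ of $K$ one forms the completions $\Lambda_\Fp := \Lambda \otimes_\Aa \Aa_\Fp$ and $\Lambda'_\Fp$ inside $D_\Fp = D \otimes_K K_\Fp$; these are maximal $\Aa_\Fp$-orders. First I would invoke the local theory: over the complete discrete valuation ring $\Aa_\Fp$, any two maximal orders in the quaternion algebra $D_\Fp$ are conjugate by an element of $D_\Fp^*$. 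This is standard — when $D_\Fp \cong M_2(K_\Fp)$ the maximal orders are the stabilizers of lattices and $\GL_2(K_\Fp)$ acts transitively on lattices up to homothety, while when $D_\Fp$ is a division algebra the unique maximal order is $\{x : \nrd(x) \in \Aa_\Fp\}$, so there is literally nothing to conjugate. Thus for every $\Fp$ there is $g_\Fp \in D_\Fp^*$ with $g_\Fp \Lambda_\Fp g_\Fp^{-1} = \Lambda'_\Fp$; moreover at all but finitely many $\Fp$ we have $\Lambda_\Fp = \Lambda'_\Fp$ (both equal the unique maximal order containing the common image of a fixed global basis), so we may take $g_\Fp \in \Lambda_\Fp^*$ there.

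Next I would package the local data $(g_\Fp)_\Fp$ as an idèle of $D^*$ and pass to the idèle class set, i.e.\ to the set of (left or right) ideal classes of $\Lambda$. The obstruction to conjugating $\Lambda$ to $\Lambda'$ globally by a single element of $D^*$ is measured by an element of this finite pointed set, the \emph{class set} of $\Lambda$ in $D$. The key input is that this class set is trivial: by the analogue of \cite[Cor.~III.5.7]{Vigneras}, since $D$ is split at $\infty$ (Assumption~\ref{MainAss}) the relevant norm group $\nrd(D^*)$ contains all idèles that are positive — here "positive" is vacuous in the function-field case — at the ramified places, and strong approximation for $\mathrm{SL}_1(D)$ away from $\infty$ applies precisely because $D_\infty^* \cong \GL_2(K_\infty)$ is non-compact modulo centre. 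Combined with the fact that $K = \BF_q(T)$ has class number $1$, so that $\Aa = \BF_q[T]$ is a PID and there is no contribution from the ideal class group of $K$, one concludes that $\Lambda$ has exactly one ideal class. Hence the class set is a singleton and the local conjugators $g_\Fp$ can be adjusted by $\Lambda_\Fp^*$ (at all $\Fp$) and replaced, via strong approximation, by a single global $g \in D^*$ with $g \Lambda g^{-1} = \Lambda'$.

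The main obstacle is the strong approximation / class number input: everything else is either purely local and elementary, or a direct citation. Concretely one must check that the hypotheses of strong approximation for $\mathrm{SL}_1(D)$ are met — $D$ is a division algebra, the place $\infty$ is split so $\mathrm{SL}_1(D)(K_\infty) \cong \SL_2(K_\infty)$ is not compact — and that $\nrd \colon D^* \to K^*$ has image all of $K^*$ locally everywhere (true since $D$ is split at $\infty$ and at $\infty$ the reduced norm is surjective, while at finite ramified places the local norm is also surjective onto $K_\Fp^*$). Given these, the triviality of the class set follows as in \cite[Ch.~III]{Vigneras}, and since $\BF_q[T]$ is a PID there are no further idelic obstructions. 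I would present the argument at the level of "combine the local conjugacy of maximal orders with the vanishing of the class set, which holds by strong approximation since $D$ is split at $\infty$ and $K$ has class number $1$," referring to \cite[Cor.~III.5.7]{Vigneras} for the case-by-case details, exactly as the paper does.
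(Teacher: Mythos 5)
Your proposal is correct and takes essentially the same route as the paper, which simply invokes \cite[Cor.~III.5.7]{Vigneras} under the two hypotheses that $D$ is split at $\infty$ and that $K$ has class number $1$; your write-up merely unpacks the standard argument behind that citation (local conjugacy of maximal orders, Eichler's norm theorem with the positivity condition vacuous over function fields, strong approximation for $\mathrm{SL}_1(D)$ using non-compactness at $\infty$, and triviality of the class set since $\Aa=\BF_q[T]$ is a PID), and you defer to the same reference for details. The only slight imprecision is that conjugacy of maximal orders is governed by the type set, a quotient of the class set, but since the class set is trivial here this does not affect the argument.
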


Let $\Gamma := \Lambda^*$ be the group of units of of a maximal order $\Lambda$. By what we have said so far, $\Gamma$ depends uniquely up to conjugation on $D$, i.e., on $K$ and~$R$. 

\smallskip

From global to local compatibilities and explicit local results, one deduces the following assertions.
\begin{Lem}\label{EmbeddingAbstract}
\begin{enumerate}
\item The reduced norm $\nrd$ maps $\Lambda$ to $\Aa$.
\item $\Gamma=\{\gamma\in\Lambda\mid \nrd(\gamma)\in\BF_q^*\}.$
\item The embedding $\iota\colon\! D\into D_\infty\cong M_2(K_\infty)$ restricts to a group monomorphism
$$\Gamma \longinto  \SL_2(K_\infty) \begin{pmatrix} \BF_q^* & 0 \\ 0 & 1 \end{pmatrix} \subset \GL_2(K_\infty).$$
\end{enumerate}
\end{Lem}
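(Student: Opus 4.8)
The plan is to prove the three assertions of Lemma~\ref{EmbeddingAbstract} in order, deducing each from standard local-global principles for quaternion algebras together with explicit local computations.

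\textbf{Part (1).} First I would show that $\nrd(\Lambda)\subseteq\Aa$. The reduced norm of any $\gamma\in D$ is a root of its reduced characteristic polynomial $X^2-\trd(\gamma)X+\nrd(\gamma)$, which has coefficients $\trd(\gamma),\nrd(\gamma)\in K$. If $\gamma\in\Lambda$, then $\gamma$ is integral over $\Aa$, hence the coefficients of its minimal polynomial over $K$ lie in the integral closure of $\Aa$ in $K$, which is $\Aa$ itself since $\Aa$ is integrally closed. (If $\gamma\in K$ the minimal polynomial is linear, but then $\nrd(\gamma)=\gamma^2\in\Aa$ as well; otherwise the reduced characteristic polynomial is the minimal polynomial.) Thus $\nrd(\gamma)\in\Aa$.

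\textbf{Part (2).} Set $\Gamma'=\{\gamma\in\Lambda\mid\nrd(\gamma)\in\BF_q^*\}$. For the inclusion $\Gamma'\subseteq\Gamma=\Lambda^*$: if $\gamma\in\Lambda$ with $\nrd(\gamma)=\gamma\bar\gamma\in\BF_q^*$, then $\gamma^{-1}=\nrd(\gamma)^{-1}\bar\gamma$; since $\bar\gamma\in\Lambda$ (the conjugation $\gamma\mapsto\bar\gamma=\trd(\gamma)-\gamma$ preserves $\Lambda$ because $\trd(\gamma)\in\Aa$ by the argument in Part~(1)) and $\nrd(\gamma)^{-1}\in\BF_q^*\subset\Aa$, we get $\gamma^{-1}\in\Lambda$, so $\gamma\in\Lambda^*$. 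Conversely, if $\gamma\in\Lambda^*$, then $\nrd(\gamma)$ and $\nrd(\gamma^{-1})=\nrd(\gamma)^{-1}$ both lie in $\Aa$ by Part~(1); an element of $\Aa$ whose inverse also lies in $\Aa$ is a unit of $\Aa$, i.e. lies in $\BF_q^*$. Hence $\nrd(\gamma)\in\BF_q^*$ and $\gamma\in\Gamma'$.

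\textbf{Part (3).} The embedding $\iota\colon D\into D_\infty\cong M_2(K_\infty)$ sends $\nrd$ to $\det$, so for $\gamma\in\Gamma$ we have $\det(\iota(\gamma))=\nrd(\gamma)\in\BF_q^*$. It remains to see that $\iota(\gamma)$ lands in the subgroup $\SL_2(K_\infty)\left(\begin{smallmatrix}\BF_q^*&0\\0&1\end{smallmatrix}\right)$; this is immediate, since any $A\in\GL_2(K_\infty)$ with $\det A=c\in\BF_q^*$ can be written $A=\big(A\left(\begin{smallmatrix}c^{-1}&0\\0&1\end{smallmatrix}\right)\big)\left(\begin{smallmatrix}c&0\\0&1\end{smallmatrix}\right)$ with the first factor of determinant $1$. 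Injectivity of $\iota$ follows because $D$ is a division algebra, so $D\to D_\infty$ is injective, and $\iota$ is a ring homomorphism, hence restricts to a group monomorphism on $\Gamma$. That $\Gamma$ is a subgroup of $D^*$ is clear, and that the image is a \emph{subgroup} of the indicated set follows since the latter is a subgroup of $\GL_2(K_\infty)$ (being the preimage of the finite subgroup $\BF_q^*$ under $\det$, intersected appropriately) — more directly, it is closed under multiplication and inversion as one checks on determinants.

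The only genuinely non-formal input is the identity $\nrd=\det$ under the chosen local isomorphism at $\infty$, which is part of Assumption~\ref{MainAss} and the general theory (the reduced norm is intrinsic and agrees with $\det$ on $M_2$); everything else is a short manipulation with integrality over the PID $\Aa$ and its unit group $\BF_q^*$. I expect no real obstacle: the main point to handle carefully is the degenerate case $\gamma\in K\subset D$ when invoking the reduced characteristic polynomial, and making sure the conjugation $\gamma\mapsto\bar\gamma$ preserves $\Lambda$, both of which follow from $\trd(\Lambda)\subseteq\Aa$.
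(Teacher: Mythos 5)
Your proof is correct. Note, though, that the paper does not actually write out a proof of this lemma: it simply remarks that the assertions follow ``from global to local compatibilities and explicit local results,'' i.e.\ the intended route is to use that $\Lambda_\Fp$ is an order in $D_\Fp$ for every finite place $\Fp$, that reduced norms of local orders land in the local rings of integers, and then to intersect: $\nrd(\Lambda)\subseteq K\cap\bigcap_\Fp\CO_\Fp=\Aa$, with the unit statement deduced from the corresponding local statements. Your argument is instead purely global: every $\gamma\in\Lambda$ is integral over $\Aa$, so (separating the degenerate case $\gamma\in K$) the reduced characteristic polynomial $X^2-\trd(\gamma)X+\nrd(\gamma)$ is the minimal polynomial and has coefficients in the integrally closed ring $\Aa$; then $\Aa^*=\BF_q^*$ and multiplicativity of $\nrd$ give (2), and the identification of $\nrd$ with $\det$ after base change to $K_\infty$, together with the factorization of any matrix of determinant $c\in\BF_q^*$ as an element of $\SL_2(K_\infty)$ times $\Diag(c,1)$, gives (3). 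This is more elementary and self-contained than the local-global sketch the paper gestures at (no local analysis needed, only that $\Aa=\BF_q[T]$ is integrally closed with unit group $\BF_q^*$); the local-global viewpoint, on the other hand, is the one that survives in the more general setting the authors mention in Remark~\ref{RemOnGenOfAlgQuotGraph}, where $\Aa$ need not be a PID and one really does argue place by place. Your handling of the two delicate points — the case $\gamma\in K$ and the fact that $\gamma\mapsto\bar\gamma=\trd(\gamma)-\gamma$ preserves $\Lambda$ because $\trd(\Lambda)\subseteq\Aa\subseteq\Lambda$ — is exactly what is needed, so there is no gap.
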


The following result is well-known. In lack of an explicit reference, we shall give a proof.
\begin{Prop}\label{PropImageDiscrete}
Via $\iota$ the group $\Gamma$ is a discrete subgroup of $\GL_2(K_\infty)$.
\end{Prop}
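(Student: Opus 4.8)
The key point is that $\Gamma$ consists of elements $\gamma\in\Lambda$ with $\nrd(\gamma)\in\BF_q^*$ (Lemma~\ref{EmbeddingAbstract}(2)), and that $\Lambda$ is a finitely generated $\Aa=\BF_q[T]$-module, free of rank~$4$. I would show that $\iota(\Gamma)$ is discrete by exhibiting, for each point of $\iota(\Gamma)$, a neighbourhood in $\GL_2(K_\infty)$ meeting $\iota(\Gamma)$ in only that point. Since $\iota$ is a group homomorphism and left translation by $\iota(\gamma)$ is a homeomorphism of $\GL_2(K_\infty)$, it suffices to do this at the identity, i.e., to find an open neighbourhood $U$ of $\mathrm{id}$ in $\GL_2(K_\infty)$ with $\iota(\Gamma)\cap U=\{\mathrm{id}\}$.

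First I would set things up as follows. Fix an $\Aa$-basis $f_1,\dots,f_4$ of $\Lambda$, and consider the induced $K_\infty$-basis of $D_\infty\cong M_2(K_\infty)$; this identifies $\iota(\Lambda)$ with the $\Aa$-lattice $\Aa f_1\oplus\cdots\oplus\Aa f_4$ sitting inside the $K_\infty$-vector space $M_2(K_\infty)\cong K_\infty^4$. Under the coordinates given by this basis, the $\CO_\infty$-span $\Lambda_\infty:=\CO_\infty f_1\oplus\cdots\oplus\CO_\infty f_4$ is a compact open $\CO_\infty$-submodule of $M_2(K_\infty)$, and for each real $c$ the set $W_c:=\{x=\sum x_i f_i : v_\infty(x_i)\ge c\ \forall i\}$ is an open neighbourhood of $0$. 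The crucial observation is the \emph{discreteness of $\Aa$ in $K_\infty$}: since $v_\infty(a)=-\deg(a)\le 0$ for every nonzero $a\in\Aa$, we have $\Aa\cap W_c=\{0\}$ for any $c>0$. Consequently $\iota(\Lambda)\cap W_c=\{0\}$ for $c>0$, because an element $\sum a_i f_i$ with all $a_i\in\Aa$ lies in $W_c$ only if each $a_i\in\Aa\cap W_c=\{0\}$.

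Next I would translate this into a statement about $\Gamma$. Write $\mathrm{id}\in M_2(K_\infty)$ in the chosen basis as $\mathrm{id}=\sum_i c_i f_i$ with $c_i\in K_\infty$, and put $N:=\mathrm{id}+W_{c}$ for $c$ large enough that $N\subset\GL_2(K_\infty)$ (possible since $\GL_2(K_\infty)$ is open in $M_2(K_\infty)$ and invertibility is an open condition). Then $N$ is an open neighbourhood of $\mathrm{id}$ in $\GL_2(K_\infty)$. If $\gamma\in\Gamma$ has $\iota(\gamma)\in N$, then $\iota(\gamma)-\mathrm{id}\in W_c$; but $\iota(\gamma)-\mathrm{id}=\sum_i(a_i-c_i)f_i$ for suitable $a_i\in\Aa$ (the coordinates of $\iota(\gamma)\in\iota(\Lambda)$), hence $a_i-c_i\in W_c$ coordinatewise. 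Since the $c_i$ are fixed, for $c$ sufficiently large there is at most one $a_i\in\Aa$ with $v_\infty(a_i-c_i)\ge c$ — indeed if $v_\infty(a_i-c_i)\ge c$ and $v_\infty(a_i'-c_i)\ge c$ then $v_\infty(a_i-a_i')\ge c$, forcing $a_i=a_i'$ by discreteness of $\Aa$. Thus $\iota(\gamma)$ is uniquely determined, so $\iota(\Gamma)\cap N$ is finite, and after shrinking $N$ further (or using that $\iota$ is injective by Lemma~\ref{EmbeddingAbstract}(3) together with the fact that distinct elements of $\Gamma$ give distinct coordinate tuples) we get $\iota(\Gamma)\cap N=\{\mathrm{id}\}$. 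Translating by arbitrary $\iota(\gamma)$ finishes the proof.

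**Main obstacle.** The argument is essentially soft once the topology on $M_2(K_\infty)$ and the discreteness of $\Aa$ in $K_\infty$ are in place; the only point requiring care is bookkeeping: because $\mathrm{id}$ need not be one of the basis vectors $f_i$, one must phrase discreteness as ``within a bounded region of $K_\infty$ there are only finitely many elements of $\Aa$'' rather than ``$\Aa\cap W_c=\{0\}$,'' and combine this with the injectivity of $\iota$ on $\Gamma$ to rule out a second point. An alternative, perhaps cleaner, route that sidesteps this is to observe that $\iota(\Gamma)\subset\iota(\Lambda)$, that $\iota(\Lambda)$ is a \emph{discrete} $\Aa$-submodule of $M_2(K_\infty)$ (being a finitely generated $\Aa$-module mapping isomorphically onto an $\Aa$-lattice, with $\Aa$ discrete in $K_\infty$), and that any subgroup of a discrete set is discrete; I would likely present the proof in this form and relegate the verification that $\iota(\Lambda)$ is discrete to the one-line remark that its $\CO_\infty$-span is compact open while $\iota(\Lambda)$ meets any translate of a small $W_c$ in at most one point.
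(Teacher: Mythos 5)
Your argument is correct, but it is not the route the paper takes for its main proof. Both proofs begin the same way, reducing discreteness of the group to discreteness of the lattice $\iota(\Lambda)$ in $M_2(K_\infty)$ (the paper via the neighbourhood basis $1+\pi^nM_2(\CO_\infty)$ and the finiteness of $\Lambda\cap M_2(\CO_\infty)$, you via translation). But for the discreteness of $\Lambda$ itself the paper argues geometrically: it realizes $\Lambda$ as $\Gamma(\BA^1_{\BF_q},\MCD)$ for a coherent sheaf of rings $\MCD$ on $\BP^1_{\BF_q}$ with $\MCD_\infty\cong M_2(\CO_\infty)$, identifies $\Lambda\cap M_2(\CO_\infty)$ with $H^0(\BP^1_{\BF_q},\MCD)$, and invokes Riemann--Roch to see this is a finite-dimensional $\BF_q$-vector space. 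Your proof is the elementary alternative the paper only sketches in one line (``an $A$-basis of $\Lambda$ maps to a $K_\infty$-basis of $D_\infty$''), and you carry it out in full and in general: since an $\Aa$-order is a full lattice, any $\Aa$-basis $f_1,\dots,f_4$ of $\Lambda$ is a $K$-basis of $D$, hence a $K_\infty$-basis of $M_2(K_\infty)$, and discreteness of $\BF_q[T]$ in $\BF_q((1/T))$ (nonzero polynomials have $v_\infty\le0$) does the rest. Two small simplifications are available to you: since $1\in\Lambda$, the identity matrix is $\iota(1)$ and its coordinates $c_i$ already lie in $\Aa$, so $\Aa\cap W_c=\{0\}$ applies directly and the ``at most one $a_i$ close to $c_i$'' bookkeeping is unnecessary; and injectivity of $\iota$ on $\Gamma$ is already Lemma~\ref{EmbeddingAbstract}(3), so it need not be re-derived. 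What the paper's sheaf-theoretic argument buys in exchange for its heavier machinery is robustness: it adapts to the more general setting of Remark~\ref{RemOnGenOfAlgQuotGraph}, where $\Aa=\Gamma(C\setminus S,\CO_C)$ for a general curve, $\Lambda$ need not be free over $\Aa$, and the simple coordinatewise discreteness of a polynomial ring in a single completed field is no longer available in the naive form you use.
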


\begin{proof}
The open sets $\lbrace 1 + \pi^n M_2(\CO_\infty) \mid n \in \BN \rbrace$ form a basis of open neighborhoods of $1$ in $\GL_2(K_\infty)$. After shifting by $1$ it suffices to show that $\Lambda \cap M_2(\CO_\infty)$ is finite, or in other words that $\Lambda$ is discrete in $M_2(K_\infty)$.

To see the discreteness, let $\MCD$ be the unique locally free coherent sheaf of rings of rank $4$ over $\BP^1_\BFq$ such that $\Lambda \cong \Gamma(\BA^1_\BFq, \MCD)$ and such that the completed stalk at infinity satisfies $\MCD_\infty \cong M_2(\CO_\infty)$. Then $\Lambda \cap M_2(\CO_\infty) = H^0(\BP^1_\BFq, \MCD)$. By the Riemann-Roch Theorem, this is a finite-dimensional $\BFq$-vector space. 

Alternatively, for $D$ and $\Lambda$ constructed later in Propositions~\ref{ExplicitD} and \ref{TheMaxOrderLambda}, and the embedding from Lemma~\ref{LemEmbedding}, the discreteness can be verified explicitly, by proving that an $A$-Basis of $\Lambda$ maps to a $K_\infty$-basis of $D_\infty$.
\end{proof}

\medskip

Given an even set $R$ of finite places of $K$ at which $D$ is ramified, the algorithm described in Sections~\ref{Sec-Algo} and~\ref{SecCompHom} will be based on a concrete model for $(D,\Lambda)$. In the remainder of this section we describe such a model. It will consist of an explicit pair $(a,b)\in K^*$ such that $D\cong\QuatAlg{a}{b}{K}$ and an explicit basis of a maximal ${\Aa}$-order $\Lambda$ of $\QuatAlg{a}{b}{K}$.

Let $l\ge2$ be even and let $R$ be a set of $l$-distinct prime ideals $\{ \Fp_1, \dots, \Fp_l \}$ of ${\Aa}$. Denote by $\varpi_i$ the unique monic (irreducible) generator of $\Fp_i$. Set $r:=\prod_i\varpi_i$ and $\Fr:=\prod_i\Fp_i$ where the index $i$ ranges over $1,\ldots,l$.

\begin{Lem}\label{LemAlphaExists}
 There is an irreducible monic polynomial $\alpha \in {\Aa}$ of even degree such that 
\begin{equation}\label{CondOnAlpha}
\Leg{\alpha}{\varpi_i} = -1 \text{ for all }i.
\end{equation}
Any such $\alpha$ also satisfies $\Leg{r}{\alpha} = 1.$
\end{Lem}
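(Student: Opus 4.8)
\emph{Existence of $\alpha$.} The plan is to produce $\alpha$ by an explicit congruence argument combined with a counting/Dirichlet-type statement for $\Aa = \BF_q[T]$. For each $i$ I would fix a nonsquare residue class $c_i \bmod \varpi_i$; the condition $\Leg{\alpha}{\varpi_i} = -1$ is equivalent to $\alpha \equiv c_i \pmod{\varpi_i}$ for one of the nonsquare classes $c_i$, so by the Chinese Remainder Theorem there is a residue class $c \bmod r$ (with $r = \prod_i \varpi_i$) such that $\Leg{\alpha}{\varpi_i} = -1$ for all $i$ precisely when $\alpha \equiv c \pmod r$. It remains to find such an $\alpha$ that is moreover irreducible, monic, and of even degree. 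For this I would invoke the function-field analog of Dirichlet's theorem on primes in arithmetic progressions (there are infinitely many monic irreducibles in the class $c \bmod r$, as long as $\gcd(c,r)=1$, which holds since each $c_i$ is a nonzero residue), and then observe that among those irreducibles infinitely many have even degree — e.g.\ by further imposing a congruence modulo an auxiliary prime, or by the explicit density (the number of monic irreducibles of degree $n$ in a fixed class is $\sim q^n/(n\,\varphi(r))$ and this is nonzero for all large $n$, in particular for large even $n$). An alternative, more self-contained route is to pick any monic irreducible $\alpha_0 \equiv c \pmod r$ and, if its degree is odd, replace it by multiplying into the congruence an extra linear constraint forcing even degree; but the cleanest is the Dirichlet statement. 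This existence argument is the main obstacle only in that it requires citing (or re-deriving) the function-field Dirichlet theorem; everything else is routine.

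\emph{The relation $\Leg{r}{\alpha} = 1$.} Now suppose $\alpha$ is as above: monic, irreducible, of even degree $d := \deg\alpha$, with $\Leg{\alpha}{\varpi_i} = -1$ for all $i$. I want to compute $\Leg{r}{\alpha} = \prod_i \Leg{\varpi_i}{\alpha}$. The key tool is quadratic reciprocity for $\BF_q[T]$ (valid since $q$ is odd): for distinct monic irreducibles $P, Q$,
$$\Leg{P}{Q}\Leg{Q}{P} = (-1)^{\frac{q-1}{2}\deg P\,\deg Q}.$$
Applying this with $P = \varpi_i$, $Q = \alpha$ gives
$$\Leg{\varpi_i}{\alpha} = (-1)^{\frac{q-1}{2}\deg\varpi_i\,\deg\alpha}\,\Leg{\alpha}{\varpi_i} = (-1)^{\frac{q-1}{2}\deg\varpi_i\,\deg\alpha}\cdot(-1).$$
Since $\deg\alpha = d$ is even, the exponent $\frac{q-1}{2}\deg\varpi_i\cdot d$ is even, so the reciprocity sign is $+1$ and $\Leg{\varpi_i}{\alpha} = -1$ for every $i$. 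Therefore
$$\Leg{r}{\alpha} = \prod_{i=1}^{l}\Leg{\varpi_i}{\alpha} = (-1)^l = 1,$$
using that $l$ is even (this is exactly the hypothesis $l \ge 2$ even on the size of $R$). This completes the proof; the only subtlety to double-check is that $\alpha \nmid r$ and $\varpi_i \nmid \alpha$ so that all Legendre symbols involved are genuinely $\pm1$ rather than $0$ — but $\alpha$ is irreducible of even degree while each $\varpi_i$ is a fixed irreducible, and an irreducible of even degree cannot equal any $\varpi_i$ if we chose its degree large enough (or we simply note $\Leg{\alpha}{\varpi_i}=-1\neq 0$ already forces $\varpi_i\nmid\alpha$, and by symmetry/reciprocity $\alpha\nmid\varpi_i$ is automatic since $\deg\alpha>\deg\varpi_i$ can be arranged, or $\alpha\ne\varpi_i$ since their Legendre symbol is nonzero).

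\emph{Summary of the order of steps.} First, set up the nonsquare congruence classes and combine via CRT; second, invoke function-field Dirichlet to get an irreducible $\alpha$ in the right class, and ensure even degree; third, for the second assertion, apply quadratic reciprocity for $\BF_q[T]$ to each pair $(\varpi_i,\alpha)$, using $2\mid\deg\alpha$ to kill the reciprocity sign; fourth, multiply over $i$ and use $2\mid l$ to conclude $\Leg{r}{\alpha}=1$. The main obstacle is purely bibliographic — having the function-field analogues of Dirichlet's theorem and quadratic reciprocity available in the form stated above; the computations themselves are immediate.
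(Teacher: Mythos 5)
Your proposal is correct and follows essentially the same route as the paper: nonsquare classes combined via CRT, the function-field Dirichlet theorem to produce a monic irreducible $\alpha$ of even degree in that class, then quadratic reciprocity (with $2\mid\deg\alpha$ killing the sign) and $2\mid l$ to get $\Leg{r}{\alpha}=1$. The only point you hedge on, ensuring even degree, is handled in the paper by citing the strong form of Dirichlet's theorem (Rosen, Thm.~4.8), which yields monic irreducibles of every sufficiently large degree in the progression, so in particular of even degree.
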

\begin{proof} Choose any $a \in {\Aa}$ such that $$\Leg{a}{\varpi_i} = -1$$ for all $i$. This can be done using the
Chinese remainder theorem. By the strong form of the function field analogue of Dirichlet's theorem on primes in arithmetic progression, \cite[Thm.~4.8]{Rosen}, the set $\lbrace a + r b \mid b \in {\Aa} \rbrace$ contains an irreducible monic 
polynomial $\alpha$ of even degree. Since $\alpha \equiv a \pmod {\varpi_i}$ we have 
$$\Leg{\alpha}{\varpi_i} = -1$$ for all $i$.
By quadratic reciprocity, \cite[Thm.~3.3]{Rosen}, we deduce
$$\Leg{\varpi_i}{\alpha} = (-1)^{\frac{q - 1}{2} \deg{\alpha}\deg{\varpi_i}} \Leg{\alpha}{\varpi_i} = -1$$
since $\deg(\alpha)$ is even. But then because $l$ is even, we find
$$\Leg{r}{\alpha} = \prod_{i=1}^l \Leg{\varpi_i}{\alpha} = (-1)^l = 1.$$
\end{proof}

\begin{Rem}
In practice $\alpha$ is rapidly found by the following simple search:
\begin{itemize}\advance\labelsep by -1.5em
\item[Step 1:] \hspace*{1.5em}Start with $m = 2$.
\item[Step 2:] \hspace*{1.5em}Check for all monic irreducible $\alpha\in{\Aa}$ of degree $m$ whether 
$\Leg{\alpha}{\varpi_i} = -1$ for all $1 \leq i \leq l$. 
\item[Step 3:] \hspace*{1.5em}If we found an $\alpha$ then stop. Else increase $m$ by $2$ and go back to Step~2.
\end{itemize}
\end{Rem}

In the function field setting \cite{MurtyScherk} gives an unconditional effective version of the \v{C}ebotarov density theorem. This allows us to make Lemma~\ref{LemAlphaExists} effective, i.e., to give explicit bounds on $\deg(\alpha)$ in terms of $\deg(r)$. 
\begin{Prop}\label{PropChebtor}
Abbreviate $d:=\deg(r)$. The following table gives upper bounds on $d_\alpha:=\deg(\alpha)$ depending on $q$ and $l$:
\begin{center}
\begin{tabular}{c|c|c|c|c|c|c|c|c|c|}
&\multicolumn{3}{|c|}{$q=3$}&\multicolumn{2}{c|}{$q=5,7$}&\multicolumn{2}{c|}{$q=9$}&\multicolumn{2}{c|}{$q\ge11$}\\
\cline{2-10}
&$l\le4$&$l=6$&$8\leq l$&$l\le6$&$8\le l$&$l\le4$&$6\le l$&$l=2$&$4\le l$\\
\hline
$d_{\alpha}\le$&$ d+7$&$d+5$&$d+1$&$d+3$&$d+1$&$d+3$&$d+1$&$d+3$&$d+1$\\
\end{tabular}\end{center}
\end{Prop}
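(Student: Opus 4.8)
The plan is to deduce Proposition~\ref{PropChebtor} from the effective \v{C}ebotarov density theorem of \cite{MurtyScherk} applied to the extension $L:=K(\sqrt{\varpi_1},\ldots,\sqrt{\varpi_l})$ of $K=\BF_q(T)$. First I would record the relevant quantitative input: the condition \eqref{CondOnAlpha} together with the parity of $\deg(\alpha)$ (needed to run the argument of Lemma~\ref{LemAlphaExists}) amounts to requiring that $\alpha$ be a monic irreducible whose Frobenius in $\Gal(L/K)$ lies in a prescribed conjugacy class (or union of classes) -- namely the classes that are nontrivial on each $\sqrt{\varpi_i}$, intersected with the even-degree condition which can be encoded by also adjoining the appropriate constant-field extension $\BF_{q^2}$, so one really works in $L':=L\cdot\BF_{q^2}$. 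I would then note that the extension $L'/K$ is abelian of degree $2^{l+1}$ (or $2^l$ if $\sqrt{\text{disc}}$ already forces the constant extension), is unramified outside $\{\varpi_1,\ldots,\varpi_l,\infty\}$ with tame ramification, and has genus controlled by the conductor-discriminant formula in terms of $d=\deg(r)$. Concretely one gets $g_{L'}\le c\cdot 2^l\cdot d$ for an explicit small constant, and the number of ramified places, their degrees, etc., are all bounded explicitly in $d$ and $l$.

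Next I would quote the Murty--Scherk bound in the form: the number $\pi_C(m)$ of monic primes of $\BF_q[T]$ of degree $m$ with Frobenius in a given conjugacy class $C\subseteq\Gal(L'/K)$ satisfies
$$\left|\pi_C(m)-\frac{|C|}{|\Gal(L'/K)|}\cdot\frac{q^m}{m}\right|\ \le\ \frac{|C|}{|\Gal(L'/K)|}\Bigl(\,(2g_{L'}+c_1 m + c_2)\,q^{m/2}+c_3\,\Bigr)$$
with explicit constants $c_1,c_2,c_3$ depending only on the ramification data of $L'/K$ (hence only on $d$ and $l$). The point is that the main term $\frac{|C|}{|G|}q^m/m$ -- with $|C|/|G|$ here a positive rational like $1/2^{l+1}$ times the number of eligible classes, which is a fixed fraction bounded below away from $0$ -- dominates the error as soon as $q^{m/2}$ beats $2g_{L'}+c_1 m+c_2$, i.e. roughly once $m\gtrsim 2\log_q(g_{L'})\approx 2\log_q(2^l d)$. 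So for $m$ equal to the claimed bound $d_\alpha$ (respectively $d_\alpha-1$ in the $q=3$, small-$l$ rows, since we need an even-degree prime and the search in the Remark steps through $m=2,4,6,\ldots$), the main term strictly exceeds the error term, giving $\pi_C(m)>0$ and hence the existence of a suitable $\alpha$ of degree $\le d_\alpha$.

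The proof then reduces to a finite but genuinely case-by-case numerical verification: for each of the nine columns of the table one plugs in the worst case of $q$ and $l$ in that column, the explicit genus bound $g_{L'}=g_{L'}(d,l)$, the explicit Murty--Scherk constants, and the claimed value $m=d_\alpha(d,l)$ (or $m=d_\alpha-1$), and checks the single inequality ``main term $>$ error term'' as a polynomial inequality in $q^{d/2}$ and $d$. I would present this as a short lemma isolating the inequality to be checked, then a table-by-table check, pointing out that the additive slack ($+7,+5,+3,+1$) is exactly what is needed to absorb the $2g_{L'}$ and the $c_1 m$ terms, which grow like $l\cdot d$ and like $m$ respectively, and that the slack shrinks as $q$ or $l$ grows because then $q^{m/2}$ grows faster while the ratio $|C|/|G|$ stays a fixed positive fraction. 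The main obstacle I anticipate is purely bookkeeping: pinning down the precise constants in the version of \cite{MurtyScherk} being used, correctly computing $g_{L'}$ and the conductor of the abelian extension $L'/K$ via conductor-discriminant (in particular handling the behavior at $\infty$ and the constant-field extension cleanly, and the subtlety that $\sqrt{r}$ or $\sqrt{-r}$ may already lie in $L$, which affects whether $[L':K]=2^l$ or $2^{l+1}$), and then checking the nine inequalities without an off-by-one error -- especially the parity adjustment $m\to m-1$ forced by the even-degree requirement and the step size $2$ in the search of the Remark.
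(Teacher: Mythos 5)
Your overall route coincides with the paper's: apply the effective \v{C}ebotarov theorem of Murty--Scherk to the multiquadratic extension $K(\sqrt{\varpi_1},\ldots,\sqrt{\varpi_l})$, control its genus through the (tame) ramification at the $\varpi_i$ and $\infty$, and check that at the claimed degree the main term beats the error term; encoding the even-degree condition by passing to $L\cdot\BF_{q^2}$ instead of simply restricting to even $k$ inside $\Gal=\{\pm1\}^l$ is a cosmetic variation, not a different method.

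There is, however, a genuine gap in the verification step as you describe it, and it concerns exactly the ingredient that produces the shape of the table. The Frobenius class you need has density $2^{-l}$ (or $2^{-(l+1)}$ in your enlarged group); it is \emph{not} ``a fixed fraction bounded below away from $0$'', and moreover one of the Murty--Scherk error terms (the one involving the ramification divisor, whose degree is $\#G\cdot\deg(D)/2$) is not damped by this density at all. After clearing denominators the inequality to be checked has the shape
\begin{equation*}
q^{k} \;>\; q^{k/2}\bigl(2^{l-1}(d+5)+2\bigr)+(k+1)\,2^{2l-1}(d+1),
\end{equation*}
so the error carries factors $2^{l-1}$ and $2^{2l-1}$, and the genus itself grows like $2^{l}d$ (not like $l\cdot d$ as you state at one point). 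Consequently, for the columns with $l$ unbounded ($8\le l$, $6\le l$, $4\le l$) there is no ``worst case of $l$'' to plug in, and with only the trivial relation $l\le d$ the inequality is simply false for small $q$: for $q=3$ and $l$ near $d$ the right-hand side contains $2^{d}d$, which dwarfs $3^{(d+1)/2}$. What rescues the statement --- and what your plan never invokes --- is the arithmetic constraint linking $l$, $q$ and $d$ coming from the scarcity of low-degree irreducibles over $\BF_q$: e.g.\ for $q=3$ there are only three monic irreducibles in each of degrees $1$ and $2$, so $l\ge7$ forces $d\ge 3l-9$, and similarly for $q=5,7$. These bounds are precisely why the additive slack $+7,+5,+3,+1$ shrinks as $l$ grows for fixed $q$; your heuristic for that phenomenon (a density bounded below) rests on the incorrect density claim. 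Without inserting these $q$-dependent lower bounds on $d$ in terms of $l$, the case-by-case checks you propose cannot succeed in the unbounded-$l$ columns.
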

A basic reference for the results on function fields used in the following proof is~\cite{Stichtenoth}.
\begin{proof} Let $K':=K(\sqrt{\varpi_1},\ldots,\sqrt{\varpi_l})$. Then $K'/K$ is a Galois extension with Galois group isomorphic to $\{\pm1\}^l$ with $\{\pm1\}\cong\BZ/(2)$; it is branch locus in $K$ is the divisor $D$ consisting of the sum of the $(\varpi_k)$ and (possibly) $\infty$; the constant field of $K'$ is again $\BFq$. Denote by $g'$ the genus of $K'$ and by $D'$ the ramification divisor of $K'/K$. The ramification degree at all places is $1$ or $2$ and hence tame because $q$ is odd. It follows that $\deg(D')=\#G/2\cdot \deg(D)$. 

Let $\pi(k)$ denote the places of $K$ of degree $k$; let $\pi_C(k)$ denote the places $\Fp$ of $K$ of degree $k$ for which $\Frob_\Fp=(-1,\ldots,-1)\in\{\pm1\}^l$. Note that the elements of $\pi_C(k)$ are in bijection to the monic irreducible polynomials $\alpha$ of degree $k$ which satisfy the conditions (\ref{CondOnAlpha}). The following two inequalities are from \cite[Thm.~1 and (1.1)]{MurtyScherk} and the Hurwitz formula, respectively:
\begin{equation}\label{CebCond1}
\big|  \pi_C(k) - \frac1{\#G} \pi(k)\big| \le 2g'\frac1{\#G} \frac{q^{k/2}}k + 2 \frac{q^{k/2}}k +\big(1+\frac1k\big)\deg(D').
\end{equation}
\begin{equation}\label{CebCond2}
\big|  q^k+1 - k\pi(k) \big| \le 2g' \frac{q^{k/2}}k .
\end{equation}
\begin{equation}\label{CebCond3}
2g'=-2\#G + \deg(D')+2.
\end{equation}
After some manipulations one obtains
$$  \pi_C(k)\ge \frac1{\#G} \frac{q^{k}+1}k - \frac{q^{k/2}}{k} \big( \frac{\deg(D)}2+2+\frac2{\#G}\big) - \big(1+\frac1k\big)\#G\frac{\deg(D)}2. $$
To ensure that the right hand side is positive for some (even) $k$, it thus suffices that 
\begin{equation}\label{CebCond4}
f(k):=q^{k} - q^{k/2} \Big( 2^{l-1}(\deg(r)+5) +2\Big) - \big(k+1 \big)2^{2l-1}(\deg(r)+1) >0. 
\end{equation}
We know that $l$ is the number of prime factors of $r$ and hence that $l\le\deg(r)$. There are at most $q$ places of degree $1$ and so for small $q$ such as $3,5,7$, already for small $l$ the degree of $r$ must be quite a bit larger than $l$. For instance if $l\ge7$ and $q=3$, then $\deg(r)\ge 3l-9$. Using these considerations and simple analysis on $f(k)$, it is simple if tedious to obtain the lower bounds in the table. We leave details to the reader.
\end{proof}

\begin{Prop}\label{ExplicitD}
For $\alpha$ as in Lemma~\ref{LemAlphaExists}, the quaternion algebra $D:= \QuatAlg{\alpha}{r}{K}$ is ramified exactly at $R$.
\end{Prop}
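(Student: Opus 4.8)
The plan is to compute the Hilbert symbol $(\alpha, r)_{K_\Fp}$ at every place $\Fp$ of $K$ using Proposition~\ref{PropComputationHilbertSymbol} and the reciprocity information collected in Lemma~\ref{LemAlphaExists}, and to check that it equals $-1$ exactly at the places in $R$. Since $D = \QuatAlg{\alpha}{r}{K}$, by Definition~\ref{DefRamification} and Definition~\ref{DefHilbertSymbol} the set of ramified places is precisely $\{\Fp : (\alpha, r)_{K_\Fp} = -1\}$, so this is the right quantity to control.

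First I would dispose of the finite places $\Fp = (\varpi)$ with $\varpi \neq \alpha$ and $\varpi \nmid r$: here both $\alpha$ and $r$ are units at $\Fp$, so in the notation of Proposition~\ref{PropComputationHilbertSymbol} we have $\alpha = \beta = 0$ and hence $(\alpha, r)_{K_\Fp} = +1$. Next, the places $\Fp_i = (\varpi_i)$ dividing $r$: write $r = \varpi_i \cdot (r/\varpi_i)$, so $r$ has valuation $1$ and unit part $u = r/\varpi_i$, while $\alpha$ is a unit at $\Fp_i$ with $\alpha = \varpi_i^0 \cdot \alpha$. Then Proposition~\ref{PropComputationHilbertSymbol} gives $(\alpha, r)_{K_{\Fp_i}} = (-1)^{0} \Leg{\alpha}{\varpi_i}^{1}\Leg{u}{\varpi_i}^{0} = \Leg{\alpha}{\varpi_i} = -1$ by condition~(\ref{CondOnAlpha}). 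For the place $\Fp = (\alpha)$ (recall $\alpha$ is irreducible): here $\alpha$ has valuation $1$ with unit part $1$, and $r$ is a unit at $(\alpha)$ since $\alpha \nmid r$ (as $\alpha$ has even degree $\geq 2$ while being forced to be distinct from each $\varpi_i$ — or more simply, $\Leg{\alpha}{\varpi_i} = -1 \neq 0$ forces $\varpi_i \neq \alpha$). Proposition~\ref{PropComputationHilbertSymbol} then yields $(\alpha, r)_{K_{(\alpha)}} = (-1)^0 \Leg{1}{\alpha}^{1} \Leg{r}{\alpha}^{0} = \Leg{r}{\alpha} = 1$, using the last assertion of Lemma~\ref{LemAlphaExists}.

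It remains to treat the infinite place $\infty$. Here I would invoke Proposition~\ref{PropRamifiedFiniteEven}: the total number of ramified places is even. We have already shown $D$ is ramified at exactly the $l$ places of $R$ among all finite places, and $l$ is even by hypothesis; therefore $D$ must be unramified at $\infty$, i.e. $(\alpha, r)_{K_\infty} = +1$. (Alternatively one could compute directly at $\infty$: with $\pi = 1/T$, $v_\infty(\alpha) = -d_\alpha$ is even and $v_\infty(r) = -d$, and one analyzes the relevant Legendre-type symbols over the residue field $\BF_q$; but the parity argument via Proposition~\ref{PropRamifiedFiniteEven} is cleaner and avoids a separate local computation.) Combining all cases, $D$ is ramified exactly at $R$, which is the assertion.

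I do not expect a serious obstacle here: every case reduces to a direct application of Proposition~\ref{PropComputationHilbertSymbol} together with the two Legendre-symbol identities from Lemma~\ref{LemAlphaExists} ($\Leg{\alpha}{\varpi_i} = -1$ and $\Leg{r}{\alpha} = 1$). The only point requiring mild care is the place $\infty$, where rather than grinding through the local symbol one should note that evenness of $l$ plus Proposition~\ref{PropRamifiedFiniteEven} forces non-ramification at $\infty$ for free; this is also the cleanest way to respect Assumption~\ref{MainAss}. A secondary small point is to record why $\alpha \notin \{\varpi_1, \dots, \varpi_l\}$, which is immediate from $\Leg{\alpha}{\varpi_i} = -1 \neq 0$.
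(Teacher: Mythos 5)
Your proposal is correct and follows essentially the same route as the paper, whose proof is literally the one-line remark that the statement is an immediate consequence of Proposition~\ref{PropComputationHilbertSymbol}: you simply carry out that case-by-case Hilbert-symbol computation, using $\Leg{\alpha}{\varpi_i}=-1$ at the places of $R$ and $\Leg{r}{\alpha}=1$ at $(\alpha)$, and your treatment of $\infty$ via the evenness of the number of ramified places (one could equally note that $\alpha$, being monic of even degree, is a square in $K_\infty$) is a harmless variant. One cosmetic slip: at the place $(\alpha)$ you transposed the exponents in Proposition~\ref{PropComputationHilbertSymbol} (it should read $\Leg{1}{\alpha}^{0}\Leg{r}{\alpha}^{1}$), but the value $\Leg{r}{\alpha}=1$ that you actually use is the correct one, so the argument stands.
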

\begin{proof}
The proof is an immediate consequence of Proposition~\ref{PropComputationHilbertSymbol}.
\end{proof}

Since $r$ is a square modulo $\alpha$, there are 
$\epsilon, \nu \in A$ with $\deg(\epsilon) < \deg(\alpha)$ and $\epsilon^2 = r + \nu \alpha$.
\begin{Prop}\label{TheMaxOrderLambda}
$\Lambda:=A+Ai+Aj+A\frac{\epsilon i + i j}\alpha$ is a maximal $A$-order of $D$.
\end{Prop}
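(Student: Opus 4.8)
The plan is to verify directly that $\Lambda$ is an $\Aa$-order and then compute its reduced discriminant, applying the maximality criterion of Proposition~\ref{DiscCritForMaxOrder}. Write $w:=\tfrac{\epsilon i+ij}{\alpha}$, so that $\Lambda=\Aa\oplus\Aa i\oplus\Aa j\oplus\Aa w$ is free of rank $4$ over $\Aa$ (the base change from $1,i,j,ij$ to $1,i,j,w$ is upper triangular with determinant $1/\alpha\neq0$), it contains $1$, and $K\Lambda=D$ since $ij=\alpha w-\epsilon i$. What remains is to check that $\Lambda$ is closed under multiplication. Using only $i^2=\alpha$, $j^2=r$, $ij=-ji$ and the defining identity $\epsilon^2=r+\nu\alpha$, one computes
$$w^2=\nu,\qquad iw=\epsilon+j,\qquad wi=\epsilon-j,\qquad jw=\nu i-\epsilon w,\qquad wj=\epsilon w-\nu i,$$
all of which lie in $\Lambda$; together with $i^2,j^2\in\Aa$ and $ij=\alpha w-\epsilon i\in\Lambda$ (hence $ji\in\Lambda$) this shows every product of basis elements is in $\Lambda$, so $\Lambda$ is a subring, hence an $\Aa$-order of $D$.

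For maximality it suffices, by Proposition~\ref{DiscCritForMaxOrder}, to show $\rdisc(\Lambda)=\Fr$, i.e.\ that the ideal generated by $\disc(1,i,j,w)=\det\big(\trd(f_if_j)\big)$ equals $\Fr^2=(r^2)$. Since $\trd(\gamma)=2\gamma_1$ for $\gamma=\gamma_1+\gamma_2i+\gamma_3j+\gamma_4ij$, the products above give the Gram matrix of the trace form in the basis $1,i,j,w$ as
$$\begin{pmatrix}2&0&0&0\\[1pt]0&2\alpha&0&2\epsilon\\[1pt]0&0&2r&0\\[1pt]0&2\epsilon&0&2\nu\end{pmatrix},$$
whose determinant equals $16\,r(\alpha\nu-\epsilon^2)=-16\,r^2$, the last step again using $\epsilon^2=r+\nu\alpha$. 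As $q$ is odd, $16\in\Aa^*$, so $(\disc(1,i,j,w))=(r^2)=\Fr^2$ and therefore $\rdisc(\Lambda)=\Fr$, which is exactly the condition for $\Lambda$ to be maximal.

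The only ingredients beyond bookkeeping are the existence of $\epsilon,\nu$ — which follows from $\Leg{r}{\alpha}=1$ (Lemma~\ref{LemAlphaExists}) together with the fact that $\Aa/\alpha\Aa$ is a field, $\alpha$ being irreducible, so that $r$ has a square root there — and keeping track of signs in the five products above (the relation $\epsilon^2=r+\nu\alpha$ is what makes $w^2$, $jw$ and $wj$ land back in $\Lambda$, and what collapses the discriminant to $-16r^2$). I expect this sign-and-relation bookkeeping to be the only place where care is needed; there is no genuine obstacle. One could alternatively check maximality prime by prime via the local theory of quaternion orders, but the reduced-discriminant computation above is the most economical route.
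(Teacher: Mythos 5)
Your proof is correct and follows the same route as the paper, which likewise deduces maximality from Proposition~\ref{DiscCritForMaxOrder} by computing the discriminant of the given basis; your Gram matrix, the value $-16r^2$, and the multiplication formulas (in particular $w^2=\nu$, $jw=\nu i-\epsilon w$, $wj=\epsilon w-\nu i$) all check out. The only difference is that you also spell out the closure of $\Lambda$ under multiplication, a verification the paper leaves implicit.
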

\begin{proof}
This follows easily from Proposition~\ref{DiscCritForMaxOrder} by computing the discriminant of the $A$-basis given.
\end{proof}

Since $\alpha$ has even degree and is monic, there exists a square root of $\alpha$ in $K_\infty$. We choose one and denote it by $\sqrt{\alpha}$. The following result provides an explicit realization for the embedding in Lemma~\ref{EmbeddingAbstract}(c).
\begin{Lem}\label{LemEmbedding}
The $K$-algebra homomorphism $\iota : D \to M_2(K_\infty)$ defined by $i \mapsto \begin{pmatrix} \sqrt{\alpha} & 0 \\ 0 & -\sqrt{\alpha} \end{pmatrix}$ and $j \mapsto \begin{pmatrix} 0 & 1 \\ r & 0 \end{pmatrix}$ induces an isomorphism $D \otimes_K K_\infty
 \cong M_2(K_\infty)$.
\end{Lem}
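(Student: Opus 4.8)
The plan is to check that the prescribed assignment on generators is compatible with the defining relations of $D=\QuatAlg{\alpha}{r}{K}$, so that $\iota$ is a well-defined $K$-algebra homomorphism, and then to upgrade the embedding to an isomorphism after base change by a simplicity-plus-dimension argument.

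First I would carry out the elementary matrix computation. Writing $I$ for the identity matrix, one has $\iota(i)^2=\begin{pmatrix}\alpha&0\\0&\alpha\end{pmatrix}=\alpha I$ and $\iota(j)^2=\begin{pmatrix}0&1\\r&0\end{pmatrix}^2=rI$, while $\iota(i)\iota(j)=\begin{pmatrix}0&\sqrt\alpha\\-r\sqrt\alpha&0\end{pmatrix}=-\iota(j)\iota(i)$. Hence the images of $i$ and $j$ satisfy exactly the relations $i^2=\alpha$, $j^2=r$, $ij=-ji$ of Construction~\ref{ConsQuatAlg}, so $\iota$ extends (uniquely) to a $K$-algebra homomorphism $D\to M_2(K_\infty)$; here $M_2(K_\infty)$ is regarded as a $K$-algebra via $K\hookrightarrow K_\infty$, and the map is defined over $K_\infty$ because $\sqrt\alpha\in K_\infty$ by the choice made just before the lemma.

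Next, by the universal property of base change, $\iota$ induces a $K_\infty$-algebra homomorphism $\tilde\iota\colon D\otimes_K K_\infty\to M_2(K_\infty)$, $\gamma\otimes c\mapsto c\,\iota(\gamma)$. Since $D$ is a quaternion algebra over $K$ it is central simple, so $D\otimes_K K_\infty$ is a central simple $K_\infty$-algebra, in particular a simple ring. The map $\tilde\iota$ is nonzero since $\tilde\iota(1)=I$, so its kernel is a proper two-sided ideal of a simple ring, hence zero; thus $\tilde\iota$ is injective. As $\dim_{K_\infty}(D\otimes_K K_\infty)=4=\dim_{K_\infty}M_2(K_\infty)$, the injective homomorphism $\tilde\iota$ is an isomorphism, which is the claim.

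There is essentially no obstacle here: the only substantive inputs are the small computation above and the standard fact that base change preserves central simplicity. If one wishes to avoid invoking the latter, an equally short alternative is to note that $D$ is a division algebra by Assumption~\ref{MainAss}, so $\iota$ itself is injective, and then to check directly that $I,\iota(i),\iota(j),\iota(ij)$ are $K_\infty$-linearly independent in $M_2(K_\infty)$ — the diagonal pair $I,\iota(i)$ spans the diagonal matrices because $\sqrt\alpha\neq 0$, and the anti-diagonal pair $\iota(j),\iota(ij)$ is independent because $-2r\sqrt\alpha\neq 0$ (using that $q$ is odd) — so that $\tilde\iota$ is surjective, hence an isomorphism by the dimension count.
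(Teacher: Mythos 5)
Your proof is correct and follows essentially the same route as the paper: the paper's proof consists precisely of the verification $\iota(i)^2=\alpha$, $\iota(j)^2=r$, $\iota(i)\iota(j)=-\iota(j)\iota(i)$ by explicit calculation, leaving the standard passage from these relations to the isomorphism $D\otimes_K K_\infty\cong M_2(K_\infty)$ implicit. Your added completion (base change of a central simple algebra plus the dimension count, or the direct linear-independence check using $q$ odd) just spells out that routine step.
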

\begin{proof} 
One verifies $\iota(i)^2 = \alpha, \iota(j)^2 = r$ and $\iota(i) \iota(j) = - \iota(j) \iota(i)$ by an explicit calculation.
\end{proof}

\section{Facts about quaternion quotient graphs}
\label{Sec-QuatGraph}

In Section~\ref{Sec-BT-Tree} we have described the natural action of $\GL_2(K_\infty)$ on the Bruhat-Tits tree $\CT$. In the previous section, starting from $D$ as in Assumption~\ref{MainAss}, we have produced a discrete subgroup $\Gamma\subset\GL_2(K_\infty)$, the unit group of a maximal order. In this section we gather some known results about the induced action of $\Gamma$ on $\CT$ and the quotient graph $\Gamma \backslash \MCT$. We mainly follow~\cite{Papikian}.

\begin{Lem}[{{\cite[Cor.~to~Prop~II.1]{Serre1}}}]\label{LemDistanceEven}
For $v \in \Ver(\MCT)$ and $\gamma \in \Gamma$, the distance $d(v, \gamma v)$ is even.
\end{Lem}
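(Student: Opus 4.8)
The plan is to equip $\Ver(\CT)$ with a $\BZ/2\BZ$-valued \emph{type} function that is invariant under $\Gamma$ but changes along every edge; the claim is then immediate. For $v\in\Ver(\CT)$ use Proposition~\ref{PropBTTreeVertices} to pick $A\in\GL_2(K_\infty)$ with $v=[A\CO_\infty^2]$ and set $\tau(v):=v_\infty(\det A)\bmod 2$. First I would check that $\tau$ is well defined: two admissible choices of $A$ differ by right multiplication by an element of $\GL_2(\CO_\infty)K_\infty^*$, whose determinant is (a unit of $\CO_\infty$) times a square in $K_\infty^*$, so $v_\infty(\det A)$ changes only by an even integer.

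Next I would show that adjacent vertices have opposite types. Using the vertex normal form of Lemma~\ref{LemVertexNF}, the adjacency criterion of Lemma~\ref{LemBTTreeVertMatr} together with the following Remark exhibits all $q+1$ neighbours of the vertex represented by $\begin{pmatrix}\pi^n & g\\0&1\end{pmatrix}$ by matrices whose determinant has valuation $n\pm1$; intrinsically, if $\pi L\subsetneq L'\subsetneq L$ with $L=A\CO_\infty^2$ and $L'=B\CO_\infty^2$, then $A^{-1}B\CO_\infty^2\subsetneq\CO_\infty^2$ has index $q$, forcing $v_\infty(\det B)=v_\infty(\det A)+1$. Hence, by induction on the length, any path of length $k$ from $v$ to $v'$ satisfies $\tau(v)-\tau(v')\equiv k\pmod 2$; taking a path of length $d(v,v')$ gives $d(v,v')\equiv\tau(v)+\tau(v')\pmod 2$.

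Finally, for $\gamma\in\Gamma$ and $v=[A\CO_\infty^2]$ the action of $\gamma$ on $\CT$ sends $v$ to $[\iota(\gamma)A\CO_\infty^2]$, so $\tau(\gamma v)=v_\infty(\det\iota(\gamma))+\tau(v)\bmod 2$ by multiplicativity of $\det$. By Lemma~\ref{EmbeddingAbstract} one has $\det\iota(\gamma)=\nrd(\gamma)\in\BFq^*$, a nonzero constant, whence $v_\infty(\det\iota(\gamma))=0$ and $\tau(\gamma v)=\tau(v)$. Combining with the previous paragraph, $d(v,\gamma v)\equiv\tau(v)+\tau(\gamma v)\equiv 2\tau(v)\equiv 0\pmod 2$. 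I do not anticipate a real obstacle: the argument rests only on the multiplicativity of $\det$ and $v_\infty$ and on the fact---already recorded in Lemma~\ref{EmbeddingAbstract}---that units of a maximal order have reduced norm a nonzero constant; the only point requiring a little care is the well-definedness and $\GL_2(K_\infty)$-equivariance (mod $2$) of the type function $\tau$.
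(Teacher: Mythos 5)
Your proof is correct, and it is essentially the argument behind the paper's citation: the paper gives no proof of its own but refers to \cite[Cor.~to~Prop.~II.1]{Serre1}, which is precisely the statement that the type function $v\mapsto v_\infty(\det A)\bmod 2$ is well defined, alternates along edges, and is preserved by any $g\in\GL_2(K_\infty)$ with $v_\infty(\det g)$ even. Your only added ingredient, that $\det\iota(\gamma)=\nrd(\gamma)\in\BF_q^*$ has valuation zero for $\gamma\in\Gamma$ (Lemma~\ref{EmbeddingAbstract}), is exactly how the cited result applies here, so nothing further is needed.
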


\begin{Prop}[{{\cite[Lem.~5.1]{Papikian}}}]\label{PropGraphFinite}
The graph $\Gamma \backslash \MCT$ is finite graph.
\end{Prop}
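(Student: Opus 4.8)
The plan is to show that $\Gamma\backslash\CT$ has finitely many vertices; finiteness of the edge set then follows automatically since $\CT$ is $(q+1)$-regular, as noted in Section~\ref{Sec-GraphTheory}. Since $\Ver(\CT)$ is identified with $\GL_2(K_\infty)/\GL_2(\CO_\infty)K_\infty^*$ (Proposition~\ref{PropBTTreeVertices}), finiteness of $\Gamma\backslash\Ver(\CT)$ is exactly the statement that $\Gamma\backslash\GL_2(K_\infty)/\GL_2(\CO_\infty)K_\infty^*$ is finite. This is a class-number finiteness statement: I would relate it to the finiteness of the class number of the order $\Lambda$ (or of the maximal order, which has class number $1$ here since $D$ is split at $\infty$ and $K$ has class number $1$), exploiting the strong approximation theorem for the algebraic group $\mathrm{SL}_1(D)$, valid precisely because $D$ is a division algebra split at the place $\infty$ (Assumption~\ref{MainAss}).

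Concretely, here are the steps. First, by Lemma~\ref{EmbeddingAbstract}(c) the image of $\Gamma$ in $\GL_2(K_\infty)$ lands in $\SL_2(K_\infty)\left(\begin{smallmatrix}\BF_q^*&0\\0&1\end{smallmatrix}\right)$, and the determinant/reduced-norm takes values in $\BF_q^*$; so up to the finite group $\BF_q^*$ one may work with the norm-one group and it suffices to bound $\Gamma^1\backslash\Ver(\CT)$ where $\Gamma^1 = \{\gamma\in\Lambda^*\mid\nrd(\gamma)=1\}$. Second, consider the adelic double coset space $\mathrm{SL}_1(D)(K)\backslash\mathrm{SL}_1(D)(\BA_K)/\mathcal{U}$ for a suitable compact open subgroup $\mathcal{U}$ of the finite adeles corresponding to the order $\Lambda$ together with the factor $\mathrm{SL}_2(\CO_\infty)$ at $\infty$ removed; by strong approximation (applicable since $\mathrm{SL}_1(D)$ is simply connected and non-compact at $\infty$), this double coset space consists of a single class, which translates into the statement that $\Gamma^1\backslash\mathrm{SL}_2(K_\infty)/\mathrm{SL}_2(\CO_\infty)$ is finite — indeed a single point if one is careful, but finiteness is all that is needed. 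Third, translate back: a fundamental domain for $\mathrm{SL}_2(\CO_\infty)$ acting on $\mathrm{SL}_2(K_\infty)$ meeting each $\Gamma^1$-orbit finitely often gives finitely many vertices in $\Gamma^1\backslash\CT$, hence in $\Gamma\backslash\CT$. Alternatively, and perhaps more in keeping with the elementary tone of the excerpt, one can cite \cite[Lem.~5.1]{Papikian} directly, or give the concrete argument using the explicit model of Proposition~\ref{TheMaxOrderLambda} and Lemma~\ref{LemEmbedding}: show that only finitely many of the vertex normal forms of Lemma~\ref{LemVertexNF} are inequivalent modulo $\Gamma$, by using discreteness (Proposition~\ref{PropImageDiscrete}) together with a compactness/covolume argument — the quotient $\Gamma^1\backslash\mathrm{SL}_2(K_\infty)$ is compact because $D$ is a \emph{division} algebra, and a compact quotient of $\mathrm{SL}_2(K_\infty)$ maps onto a finite quotient of the tree since $\mathrm{SL}_2(\CO_\infty)$ is open.

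The main obstacle is the input that makes everything work, namely the \emph{cocompactness} of $\Gamma$ in $\mathrm{SL}_2(K_\infty)$, which rests on $D$ being a division algebra (so that $\mathrm{SL}_1(D)$ is $K$-anisotropic modulo center, equivalently has no nontrivial unipotents, hence — by the Godement compactness criterion, or its function-field analogue — yields a cocompact arithmetic lattice). Spelling that out from scratch would take us into the reduction theory of $\mathrm{SL}_1(D)$ over function fields, which is why it is cleanest simply to invoke \cite[Lem.~5.1]{Papikian} as the excerpt already signals it will. Thus my proposed proof is essentially one line: \emph{This is \cite[Lem.~5.1]{Papikian}; it follows from the fact that $\Gamma$ is a cocompact lattice in $\mathrm{SL}_2(K_\infty)$ — which holds because $D$ is a division algebra split at $\infty$ — together with the openness of the vertex stabilizer $\mathrm{SL}_2(\CO_\infty)$.} Everything else in the plan above is what one would write if a self-contained argument were wanted.
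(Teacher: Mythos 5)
The paper gives no proof of this proposition at all --- it is quoted directly from \cite[Lem.~5.1]{Papikian} --- so your bottom-line one-liner (cite Papikian; the underlying reason is that $\Gamma$ is a cocompact lattice in $\SL_2(K_\infty)$ because $D$ is a division algebra split at $\infty$, and the vertex stabilizer $\SL_2(\CO_\infty)$ is open) matches the paper and is correct. One caution about your longer sketch: strong approximation only collapses the adelic class set to a point, and does not by itself make $\Gamma^1\backslash\SL_2(K_\infty)/\SL_2(\CO_\infty)$ a single point or even finite --- that double coset space is essentially the vertex set of the quotient graph, which is typically large, and its finiteness rests on the Godement-type compactness of $\mathrm{SL}_1(D)(K)\backslash\mathrm{SL}_1(D)(\BA_K)$, i.e.\ exactly the cocompactness you invoke in your final step.
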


\begin{Prop}[{{\cite[Prop.~5.2]{Papikian}}}]\label{PropStabilisors}
Let $v \in \Ver(\MCT)$ and $e \in \Edg(\MCT)$. Then $\Gamma_v := \Stab_{\Gamma}(v)$ is either isomorphic to $\BF_q^*$
or $\BF_{q^2}^*$. $\Gamma_e := \Stab_{\Gamma}(e)$ is isomorphic to $\BF_q^*$.
\end{Prop}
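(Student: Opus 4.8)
The statement to prove is Proposition~\ref{PropStabilisors}: for $v \in \Ver(\MCT)$ and $e \in \Edg(\MCT)$, the stabilizer $\Gamma_v = \Stab_\Gamma(v)$ is isomorphic to $\BF_q^*$ or $\BF_{q^2}^*$, while $\Gamma_e = \Stab_\Gamma(e)$ is isomorphic to $\BF_q^*$.

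Since this is cited as \cite[Prop.~5.2]{Papikian}, I would give a self-contained argument rather than an original one. The key is to transport the question from the tree to the division algebra $D$. By Proposition~\ref{PropBTTreeVertices} a vertex $v$ corresponds to a lattice class $[L]$ with $L \subset K_\infty^2$ an $\CO_\infty$-lattice, and $\Gamma_v$ consists of those $\gamma \in \Gamma$ with $\gamma L = L$ (after identifying $D_\infty \cong M_2(K_\infty)$ via $\iota$). Now $\gamma \in \Gamma$ already satisfies $\nrd(\gamma) \in \BF_q^*$ by Lemma~\ref{EmbeddingAbstract}(b), so $\gamma$ is a unit in $D$, hence a unit in any order containing it. The stabilizer $\Gamma_v$ is itself a finite group: it is discrete (being a subgroup of the discrete group $\Gamma$ by Proposition~\ref{PropImageDiscrete}) and it fixes a lattice, so it sits inside the compact group $\mathrm{GL}_2(\CO_\infty)K_\infty^*/K_\infty^*$, forcing finiteness. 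A finite subgroup of $D^*$ generates a sub-$K$-algebra of $D$ that is a finite-dimensional division algebra over $K$ (a nonzero subalgebra of a division algebra is a division algebra), and since a finite subgroup of the multiplicative group of a field is cyclic, $\Gamma_v$ is a finite cyclic group. The possible orders are constrained: the subalgebra $K[\Gamma_v] \subseteq D$ is either $K$ itself or a quadratic field extension $L/K$ (it cannot be all of $D$, since $D$ is noncommutative while $K[\Gamma_v]$ is commutative, being generated by a single element over $K$ as $\Gamma_v$ is cyclic). In the first case $\Gamma_v \subseteq K^* \cap \Lambda^* $; one checks the torsion of $K^* = \BF_q(T)^*$ is exactly $\BF_q^*$, giving $\Gamma_v \cong \BF_q^*$. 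In the second case $\Gamma_v$ lies in the torsion of $L^*$ for a quadratic extension $L/\BF_q(T)$, hence in $\BF_{q^2}^*$ (the constants of $L$); and conversely, if $L$ embeds in $D$ and $\BF_{q^2} \subseteq L$, then $\BF_{q^2}^* \subseteq \Lambda^*$ up to conjugation because $\BF_{q^2}[T]$ (or rather the constant field) lands in a maximal order — here one uses that all maximal orders are conjugate (the Proposition just before Lemma~\ref{EmbeddingAbstract}) to place the embedding inside our fixed $\Lambda$. Thus $\Gamma_v$ is isomorphic to $\BF_q^*$ or to a subgroup of $\BF_{q^2}^*$ containing $\BF_q^*$ properly only if the full $\BF_{q^2}^*$ occurs; a divisibility check ($\BF_q^* $ has order $q-1$, and an intermediate cyclic group between orders $q-1$ and $q^2-1$ that is the torsion subgroup of $L^*$ must be all of $\BF_{q^2}^*$) pins it down to exactly $\BF_q^*$ or $\BF_{q^2}^*$.

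For edges: an edge $e$ with $o(e) = [L]$, $t(e) = [L']$ and $\pi L \subsetneq L' \subsetneq L$ is stabilized by $\gamma$ iff $\gamma$ fixes both endpoints \emph{as an ordered pair} (note $\Gamma$ preserves orientation since, by Lemma~\ref{LemDistanceEven}, $d(v,\gamma v)$ is even, so $\Gamma$ cannot invert an edge); hence $\Gamma_e = \Gamma_{o(e)} \cap \Gamma_{t(e)}$, a subgroup of $\Gamma_v$. So $\Gamma_e$ is cyclic of order dividing $q^2 - 1$. To rule out $\BF_{q^2}^*$ and conclude $\Gamma_e \cong \BF_q^*$, I would argue that $\Gamma_e$ acts on the one-dimensional $\BF_q$-vector space $L/L'$ (since $\gamma L = L$ and $\gamma L' = L'$), giving a character $\Gamma_e \to \BF_q^* = \mathrm{Aut}_{\CO_\infty}(L/L')$; its kernel acts trivially on $L/\pi L = L/L' \oplus L'/\pi L$ — one also gets an action on $L'/\pi L \cong \BF_q$ — so an element of the kernel reduces to the identity in $\mathrm{GL}_2(\CO_\infty/\pi)$, and combined with the determinant-in-$\BF_q^*$ constraint this forces it to be a scalar in $\BF_q^*$. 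Hence $\Gamma_e$ maps to $\BF_q^*$ with kernel contained in $\BF_q^*$, so $|\Gamma_e| \mid (q-1)^2$; but $\Gamma_e$ is cyclic of order dividing $q^2-1$, and $\gcd$-type reasoning with $q-1 = \gcd$ considerations gives $|\Gamma_e| \mid q-1$. Since $\BF_q^* \subseteq \Gamma_e$ always (scalars fix every lattice and have norm in $\BF_q^*$), we get $\Gamma_e \cong \BF_q^*$.

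The main obstacle is the \emph{lower bound} — showing every containment is an equality, i.e.\ that $\BF_{q^2}^*$ genuinely occurs for some vertices and that $\Gamma_e$ is not smaller than $\BF_q^*$. The upper bounds (finite, cyclic, order dividing $q^2-1$ resp.\ $q-1$) are soft. Realizing $\BF_{q^2}^*$ requires knowing that a quadratic constant-field extension $\BF_{q^2}(T)$ embeds into $D$ as a $K$-subalgebra — equivalently, that $\BF_{q^2}(T) \otimes_K K_\Fp$ is a field for no $\Fp \in R$ is \emph{not} required; rather one needs it to be \emph{non-split at every ramified place}, which holds because $\BF_{q^2}(T)/K$ is ramified nowhere at finite places and inert at $\infty$ only if $\deg$ is... — here one invokes the standard embedding criterion (\cite[Ch.~III]{Vigneras}): a quadratic extension $L/K$ embeds in $D$ iff no place of $R$ splits in $L$, and for the constant extension $\BF_{q^2}(T)$ every finite place of odd residue degree is inert while those of even residue degree split — so one needs the elements of $R$ to have odd degree, or more precisely one must exhibit \emph{some} quadratic $L/K$ with $\BF_{q^2} \subset L$ embeddable; when no such $L$ embeds, simply not every vertex has the larger stabilizer and the statement still only claims "$\BF_q^*$ or $\BF_{q^2}^*$", so strictly the proposition as stated only needs the upper bounds plus the always-present $\BF_q^*$. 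I would therefore present the clean version: prove $\Gamma_v$ is finite cyclic with $K[\Gamma_v]$ of $K$-dimension $1$ or $2$, deduce the list of possibilities via torsion of $\BF_q(T)^*$ and of quadratic extensions, and handle edges via the $L/L'$-character argument, citing \cite[Prop.~5.2]{Papikian} for the sharpness.
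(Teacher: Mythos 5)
The paper itself offers no proof here---the proposition is simply quoted from Papikian---so your argument stands on its own; its overall strategy (finiteness of $\Gamma_v$ from discreteness plus compactness, then arithmetic inside $D$) is reasonable, but three steps do not hold up. First, cyclicity of $\Gamma_v$ is obtained circularly: you deduce ``cyclic'' from ``$\Gamma_v$ lies in the multiplicative group of a field'', and you justify that $K[\Gamma_v]$ is commutative (hence a field rather than all of $D$) by appealing to cyclicity. The non-circular route is Wedderburn's little theorem: $\BF_q[\Gamma_v]\subseteq\Lambda$ is a finite ring without zero divisors, hence a finite field, whence $\Gamma_v$ is cyclic and $K[\Gamma_v]$ is a commutative subfield of $D$ of degree at most $2$ over $K$. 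Second, and more seriously, your ``divisibility check'' does not exclude proper intermediate groups $\BF_q^*\subsetneq\Gamma_v\subsetneq\BF_{q^2}^*$: for $q=3$ a cyclic group of order $4$ contains $\BF_3^*$ and generates $\BF_9$ over $\BF_3$, and nothing you wrote rules it out, because $\Gamma_v$ was never shown to be the \emph{full} torsion subgroup of the quadratic field---that is exactly what has to be proved. The standard fix uses the additive structure: with $v=[L]$, the set $E_v:=\{\gamma\in\Lambda\mid \iota(\gamma)L\subseteq L\}$ is a finite $\BF_q$-algebra without zero divisors (finite by your discreteness-plus-compactness argument), hence a finite field, necessarily $\BF_q$ or $\BF_{q^2}$ since $KE_v$ is a commutative subfield of $D$; and $\Gamma_v=E_v\setminus\{0\}$, because $\nrd(\gamma)\in\BF_q^*$ forces $\iota(\gamma)L=L$ exactly (compare valuations of determinants) and inverses of nonzero elements of $E_v$ again lie in $E_v$. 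This is what pins the answer down to exactly $\BF_q^*$ or $\BF_{q^2}^*$.

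The edge case suffers from the same defect plus a false intermediate claim. An element of the kernel of your character $\Gamma_e\to\BF_q^*$ on $L/L'$ does \emph{not} ``reduce to the identity in $\GL_2(\CO_\infty/\pi)$'': it acts trivially on the quotient $L/L'$ but only by some scalar on the submodule $L'/\pi L$, so its reduction is merely triangular, and the determinant condition adds nothing. Moreover, even granting that the kernel lies in $\BF_q^*$, your counting only gives $\#\Gamma_e$ dividing $\gcd\big((q-1)^2,\,q^2-1\big)=2(q-1)$ for odd $q$, so a cyclic group of order $2(q-1)$ (order $4$ when $q=3$) survives the argument. A correct finish: if $\Gamma_{o(e)}\cong\BF_q^*$ there is nothing to do, since the scalars stabilize every simplex; if $\Gamma_{o(e)}=E^*$ with $E\cong\BF_{q^2}$ as above, then $L/\pi L$ is a one-dimensional $E$-vector space, and any $\gamma\in\Gamma_e$ preserves the nonzero $\BF_q$-line $L'/\pi L$, so $\gamma x=cx$ for some $x\neq 0$ and $c\in\BF_q$; since $\gamma-c\in E$ acts on this $E$-vector space as a scalar, it must vanish, forcing $\gamma=c\in\BF_q^*$, i.e.\ $\Gamma_e=\BF_q^*$. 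Your reduction $\Gamma_e=\Gamma_{o(e)}\cap\Gamma_{t(e)}$ and your correct observation that the proposition needs no realization statement (only upper bounds plus the ever-present scalars) are fine as they stand.
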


Note that the scalar matrices with diagonal in $\BF_q^*$ are precisely the scalar matrices in $\Gamma$. Clearly they act trivially on $\CT$. Hence a stabilizer of a simplex is isomorphic to $\BF_q^*$ precisely if it is the set of scalar matrices with diagonal in~$\BF_q^*$. 

We define $\bar\Gamma$ to be the image of $\Gamma$ in $\PGL_2(K_\infty)$ -- after what we have just seen we have $\bar\Gamma\cong \Gamma/\BF_q^*$. Then $\bar\Gamma_v:=\Stab_{\bar\Gamma}(v)$ is either trivial or isomorphic to $\BF_{q^2}^*/\BF_{q}^*\cong \BZ/(q+1)$ and $\bar\Gamma_e := \Stab_{\bar\Gamma}(e)$ is always trivial.

\begin{Def}[{{\cite[II.2.9]{Serre1}}}]
We call a simplex $t$ {\em projectively stable} if $\bar\Gamma_t$ is trivial and {\em projectively unstable} otherwise. 
\end{Def}

\begin{Cor}\label{CorStabilisors}
Let $v \in \Ver(\MCT)$ be projectively unstable. Then $\Gamma_v$ acts transitively on the vertices
adjacent to $v$.
\end{Cor}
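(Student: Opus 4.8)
The statement to prove is Corollary~\ref{CorStabilisors}: if $v \in \Ver(\MCT)$ is projectively unstable, then $\Gamma_v$ acts transitively on the vertices adjacent to $v$. The plan is to combine the structure result on stabilizers (Proposition~\ref{PropStabilisors} together with the subsequent discussion of $\bar\Gamma$) with a simple counting argument: $v$ has exactly $q+1$ neighbors since $\CT$ is $(q+1)$-regular, and $\bar\Gamma_v$ has order $q+1$ in the unstable case, so a transitive action is forced once one knows the action has no fixed neighbor and the orbit of a single neighbor is all of them.

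First I would set up the action. The group $\Gamma_v$ fixes $v$, hence permutes the set $\Nbs(v)$ of the $q+1$ vertices adjacent to $v$; equivalently it permutes the $q+1$ edges emanating from $v$. Since the scalar subgroup $\BF_q^* \subset \Gamma$ acts trivially on $\CT$, this action factors through $\bar\Gamma_v = \Stab_{\bar\Gamma}(v)$. By hypothesis $v$ is projectively unstable, so by Proposition~\ref{PropStabilisors} and the discussion following it, $\bar\Gamma_v \cong \BF_{q^2}^*/\BF_q^* \cong \BZ/(q+1)$, a cyclic group of order exactly $q+1$ acting on a set of size exactly $q+1$.

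Next, the key point is that this action is \emph{free} on $\Nbs(v)$: if some $\bar\gamma \in \bar\Gamma_v$ fixes a neighbor $v'$, then $\bar\gamma$ fixes the edge $e$ joining $v$ and $v'$, so $\bar\gamma \in \bar\Gamma_e$; but $\bar\Gamma_e$ is trivial, hence $\bar\gamma = 1$. Therefore every orbit of $\bar\Gamma_v$ on $\Nbs(v)$ has cardinality $\#\bar\Gamma_v = q+1$. Since $\#\Nbs(v) = q+1$ as well, there is exactly one orbit, i.e., the action is transitive. Because $\Gamma_v \twoheadrightarrow \bar\Gamma_v$, the action of $\Gamma_v$ itself is transitive on $\Nbs(v)$, which is the assertion.

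There is essentially no obstacle here: the whole argument is the orbit-counting observation $|\text{orbit}| = |\text{group}| = |\text{set}|$, with freeness supplied by the triviality of edge stabilizers in $\bar\Gamma$. The only point requiring a line of care is the reduction from $\Gamma_v$ to $\bar\Gamma_v$ and the identification of $\#\bar\Gamma_v$ with $q+1$ in the unstable case, both of which are already recorded in the paragraphs preceding the corollary; one should also note in passing that this simultaneously shows $\Gamma_v \cong \BF_{q^2}^*$ (rather than $\BF_q^*$) whenever $v$ is unstable, consistent with Proposition~\ref{PropStabilisors}.
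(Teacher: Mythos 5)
Your argument is correct and is exactly the (implicit) reasoning behind the corollary in the paper: since $\bar\Gamma_e$ is trivial for every edge, the group $\bar\Gamma_v\cong\BZ/(q+1)$ acts freely on the $q+1$ neighbors of $v$, so a single orbit exhausts them and the action of $\Gamma_v$ is transitive. The paper states the corollary without proof, and your counting argument via the triviality of edge stabilizers is the standard way to deduce it from Proposition~\ref{PropStabilisors} and the surrounding discussion.
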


Let $$\odd(R) := \begin{cases} 0 & \text{ if some place in $R$ has even degree,}\\ 1 & \text{ otherwise} \end{cases}$$

and let $$g(R) := 1 + \frac{1}{q^2 - 1} \left(\prod_{\Fp \in R} (q_\Fp - 1)\right) - \frac{q}{q + 1} 2^{\#R - 1} \odd(R)$$
where $q_\Fp = q^{\deg(\Fp)}$.
Let $\pi : \MCT \to \Gamma \backslash \MCT$ be the natural projection.

\begin{Thm}[{{\cite[Thm.~5.5]{Papikian}}}]
\label{ThmQuotStr}
\begin{enumerate}
\item The graph $\Gamma \backslash \MCT$ has no loops.
\item $h_1(\Gamma \backslash \MCT) = g(R)$.
\item For $\bar v \in \Gamma \backslash \MCT$ and $v \in \pi^{-1} (\bar v)$ we have: 
\begin{enumerate}
\item $v$ is projectively stable if and only if $\bar v$ has degree $q + 1$.
\item $v$ is projectively unstable if and only if $\bar v$ is terminal.
\end{enumerate}
\item Let $V_1$ (resp. $V_{q + 1}$) be the number of terminal (resp. degree $q + 1$) vertices of $\Gamma
\backslash \MCT$. Then
$$V_1 = 2^{\#R - 1} \odd(R) \text{ and } V_{q + 1} = \frac{1}{q - 1}(2 g(R) - 2 + V_1).$$
\end{enumerate}
\end{Thm}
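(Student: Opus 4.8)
The plan is to deduce all four parts from the mass formula / Euler characteristic machinery for $\Gamma$ acting on the tree $\CT$, combined with the local stabilizer description in Proposition~\ref{PropStabilisors} and its corollary. First I would recall that, since $\Gamma$ acts on the tree $\CT$ with finite quotient (Proposition~\ref{PropGraphFinite}) and with stabilizers that are finite (of order $q-1$ or $q^2-1$ for vertices, $q-1$ for edges), the quotient $\Gamma\backslash\CT$ is a finite graph of groups, and one has Bass--Serre theory available. For part~(a), no loops: a loop at $\bar v$ would come from an edge $e$ of $\CT$ with $\gamma\cdot o(e)=t(e)$ for some $\gamma\in\Gamma$; then $\gamma$ maps an edge to an adjacent edge, so $d(o(e),\gamma\, o(e))=1$ is odd, contradicting Lemma~\ref{LemDistanceEven}. (Equivalently, $\bar\Gamma$ acts without inversions and identifies no adjacent vertices because all translation lengths are even.)

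For part~(b), $h_1(\Gamma\backslash\CT)=g(R)$: I would compute the rational Euler characteristic $\chi(\Gamma) = \sum_{\bar v} \frac{1}{|\bar\Gamma_v|} - \sum_{\bar e} \frac{1}{|\bar\Gamma_e|}$ (sum over an orientation of edges) in two ways. On the one hand, since all edge stabilizers in $\bar\Gamma$ are trivial and since $|V|-|E|_{\mathrm{geom}} = 1 - h_1(\Gamma\backslash\CT)$ for a finite connected graph, the combinatorial side gives $\chi(\Gamma)$ in terms of $h_1$ and the correction coming from the projectively unstable (terminal) vertices, whose stabilizer in $\bar\Gamma$ has order $q+1$. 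On the other hand, the analytic/arithmetic side computes $\chi(\Gamma)$ via the volume of $\Gamma\backslash\PGL_2(K_\infty)/\PGL_2(\CO_\infty)$, i.e., essentially via the mass formula for the (Eichler) order $\Lambda$: here one invokes the function-field analogue of the Eichler mass formula, expressing the mass of the unit group in terms of a zeta value and the local factors $\prod_{\Fp\in R}(q_\Fp-1)$, which after normalization produces exactly the $\frac{1}{q^2-1}\prod(q_\Fp-1)$ term. Matching the two expressions, and feeding in the count of terminal vertices from part~(d), yields $h_1=g(R)$. I expect this matching — carefully tracking the volume normalization so that the zeta value of $\BP^1_{\BF_q}$ contributes the ``$1+\dots$'' and the unstable-vertex correction produces the $\frac{q}{q+1}2^{\#R-1}\odd(R)$ term — to be the main obstacle, since it is where the arithmetic input (class number one, splitting at $\infty$) and the combinatorics must be reconciled with the right constants.

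For part~(c), the dichotomy for vertices: a vertex $v\in\CT$ is projectively unstable iff $\bar\Gamma_v\cong\BF_{q^2}^*/\BF_q^*\cong\BZ/(q+1)$ is nontrivial. By Corollary~\ref{CorStabilisors}, such a $\Gamma_v$ permutes the $q+1$ neighbors of $v$ transitively; since $\Gamma_v$ also fixes $v$, the orbit of each neighbor under $\Gamma_v$ is the full set of neighbors, so in the quotient $\bar v$ has a single edge emanating, i.e., $\bar v$ is terminal. Conversely, if $v$ is projectively stable then $\bar\Gamma_v$ is trivial, the local map $\mathrm{st}(v)\to\mathrm{st}(\bar v)$ on stars is injective, and since $\CT$ is $(q+1)$-regular, $\bar v$ has degree $q+1$. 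One must also check no intermediate degrees occur: a $\BZ/(q+1)$-action on a set of $q+1$ elements with a fixed point elsewhere is either trivial or transitive on that set (as $q+1$ is the order of the cyclic group acting), giving exactly the two cases. For part~(d), the count of terminal vertices $V_1=2^{\#R-1}\odd(R)$: by part~(c) these correspond to $\Gamma$-orbits of vertices with $\bar\Gamma_v\cong\BZ/(q+1)$, equivalently to conjugacy classes of embeddings of $\BF_{q^2}$ into $D$ (i.e., into $\Lambda$) up to $\Gamma$-conjugacy; this is an optimal embedding count which, via the local--global principle for optimal embeddings and class number one, equals $\prod_{\Fp\in R} m_\Fp$ where the local embedding number $m_\Fp$ of the unramified quadratic extension into the local division algebra is $1$, modified by a global factor of $\tfrac12$ and the parity condition $\odd(R)$ coming from whether $\BF_{q^2}\subset D$ at all (which depends on $\BF_{q^2}\otimes_K$ being a field at some ramified place, i.e., on the degrees of places in $R$). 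Finally $V_{q+1}=\frac{1}{q-1}(2g(R)-2+V_1)$ is pure bookkeeping: summing $\degree$ over all vertices equals $2\cdot\#\{\text{geometric edges}\}=2(\#\Ver-1+h_1)$, and with only two possible degrees ($1$ and $q+1$) and $h_1=g(R)$ from part~(b), one solves the two linear equations $V_1+V_{q+1}=\#\Ver$ and $V_1+(q+1)V_{q+1}=2(\#\Ver-1+g(R))$ for $V_{q+1}$, which rearranges exactly to the claimed formula. I would present parts (a),(c),(d) and the bookkeeping in (d) in full and refer to \cite{Papikian} for the detailed mass-formula computation underlying (b).
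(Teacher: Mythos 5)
The paper does not actually prove this theorem: it is imported verbatim from Papikian (\cite[Thm.~5.5]{Papikian}), so there is no internal argument to compare against, and the honest benchmark is whether your sketch is sound and how much of it you ultimately outsource. The parts you carry out in full are correct and use exactly the ingredients the paper records for this purpose: a loop would force $d(o(e),\gamma o(e))=1$, contradicting Lemma~\ref{LemDistanceEven} (and note any $\gamma$ identifying two edges emanating from $v$ must fix $v$, again by evenness, so the degree of $\bar v$ is the number of $\bar\Gamma_v$-orbits of edges at $v$); with Proposition~\ref{PropStabilisors} and Corollary~\ref{CorStabilisors} this gives the clean dichotomy in (c), and your degree-sum bookkeeping $(q-1)V_{q+1}=V_1+2g(R)-2$ is exactly right. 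The genuinely arithmetic content — the Euler-characteristic/mass-formula identity behind (b) and the optimal-embedding count giving $V_1=2^{\#R-1}\odd(R)$ — you only gesture at (local embedding numbers, the factor $\tfrac12$, the $\odd(R)$ criterion via inertness of places of odd degree in the constant quadratic extension) and then defer to Papikian, which is precisely what the paper itself does by citing him for the whole statement. One logical point to make explicit if you write this up: $V_1$ must be established independently (via the embedding count), since your derivation of (b) feeds in the terminal-vertex count and your formula for $V_{q+1}$ then uses (b); in the order (d)$_{V_1}$ $\to$ (b) $\to$ (d)$_{V_{q+1}}$ there is no circularity, but as stated the plan risks reading as if (b) and (d) each presuppose the other.
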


\section{An algorithm to compute a fundamental domain}
\label{Sec-Algo}

Let the notation $\CT$, $\Gamma$ be as in the previous section.
\begin{Def}[{{\cite[\S~I.3]{Serre1}}}]\label{DefTreeOfReprs}
Let $G$ be a group acting on a graph~$\MCX$. A {\em  tree of representatives of $\MCX \pmod G$} is a subtree $\CS\subset\CG$ whose image in $G \backslash \MCX$ is a maximal subtree.
\end{Def}

The following definition is basically  \cite[\S~I.4.1, Lem.~4]{Serre1}, see also \cite[\S~I.5.4, Thm.~13]{Serre1}. Note that (a) differs from \cite[\S~I.4.1, Def.~7]{Serre1}.
\begin{Def}\label{DefFundamentalDomainWithSidePairing}
Let $G$ be a group acting on a tree~$\MCX$.
\begin{enumerate}
\item A {\em fundamental domain for $\CX$ under $G$} is a pair $(\CS,\CY)$ of subgraphs $\CS\subset \CY\subset\MCX$ such that 
\begin{enumerate}
\item $\CS$ is a tree of representatives of $\CX\pmod G$,
\item the projection $\Edg(\MCY) \to \Edg(G \backslash \MCX)$ is a bijection, and
\item any edge of $\CY$ has at least one of its vertices in~$\CS$. 
\end{enumerate}
\item An {\em edge pairing for a fundamental domain $\MCY$ of $\CX$ under $G$} is a map
\[ \EP:=\EP_{(\CS,\CY)}:=\{ e \in \Edg(\MCY)\setminus\Edg(\CS)\mid o(e) \in \MCS\} \to G: e\mapsto g_e\]
such that $g_e t(e) \in \Ver(\MCS)$. We write $\EP$ for {\em paired edges}. To avoid cumbersome notation, we usually abbreviate $\EP_{\CY,\CS}$ by $\EP$. 
\item An {\em enhanced fundamental domain for $\CX$ under $G$} consists of a fundamental domain, an edge pairing and simplex labels $G_t:=\Stab_G(t)$ for all simplices $t$ of $\CY$. 
\end{enumerate}
\end{Def}
An edge pairing encodes that under the $G$-action any $e=(v,v')\in\EP$ is identified (paired) with $ge=(g_e v,g_ev')$ when passing from $\CX$ to $G\backslash\CX$. Because $\CX$ is a tree and the image of $\CS$ in $G\backslash\CX$ is a maximal subtree, each edge in $\Edg(\MCY)\setminus\Edg(\CS)$ has exactly one of its vertices in $\Ver(\CS)$ and therefore $\EP$ contains exactly those edges of $\Edg(\MCY)\setminus\Edg(\CS)$ pointing away from~$\CS$. An enhanced fundamental domain is a graph of groups in the sense of \cite[I.4.4, Def.~8]{Serre1} realized inside~$\CX$. Given a fundamental domain with an edge pairing the tree $\CS$ can be recovered from $\CY$ and~$\EP$. 

\begin{Rem}
If one barycentrically subdivides $\MCT$, an alternative way to think of an edge pairing is that it pairs the two half sides $[o(e), \frac{1}{2} o(e) + \frac{1}{2} t(e)]$ and $[g_e t(e), g_e(\frac{1}{2} o(e) + \frac{1}{2} t(e))]$.
\end{Rem}

It will be convenient to introduce the following notation:
\begin{Def}
For any group $G$ acting on a set $X$ we define a category $\CC_G(X)$ whose objects are the elements of $X$ and whose morphism sets are defined as  
$$\Hom_G(x, y) := \lbrace \gamma \in G \mid g x = y \rbrace \subseteq G.$$
for $x,y\in X$. The composition of morphisms is given by multiplication in $G$.
\end{Def}
In particular $\End_G(x) := \Hom_G(x, x) = \Stab_G(x)$.

\smallskip

For the remainder of this section, we assume that $\Hom_\Gamma(v,w)$ can be computed effectively for
all $v, w \in \Ver(\MCT)$. This will be verified in Section~\ref{SecCompHom}. 

\begin{Alg} {\bf(Computation of the quotient graph)}\label{AlgQuotGraphNew}\\[.5em]
{\em Input:} A subgroup $\Gamma\subset\GL_2(K_\infty)$ for which there exists a routine for computing $\Hom_\Gamma(v,v')$ for all $v,v'\in\Ver(\CT)$ which are equidistant from $[L(0,0)]$. \\[.5em]
{\em Output:} A directed multigraph $\CG$ with a label attached to each simplex. The label values on edges are either $(e,1)$ (preset), or $(e,-1)$, or a pair $(e,g)$ with $e\in\Edg(\CT)$, $g\in\Gamma$. The label values on vertices are either $(v,1)$ (preset) or $(v,G)$ for $v\in\Ver(\CT)$ and $G\subset \Gamma$ a finite subgroup.\\[.5em]
{\em Algorithm:} 
\end{Alg}

\begin{enumerate}
\item Set $v_0=[L(0,0)]$. If $\#\End_\Gamma(v_0)=q^2-1$, replace $v_0$ by $[L(1,0)]$. If after replacement we still have $\#\End_\Gamma(v_0)=q^2-1$, then terminate the algorithm with the output the connected graph on $2$ vertices and one edge and with vertex labels $\End_\Gamma(v)$ for each of the two vertices~$v$.
\item Initialize a graph $\CG$ with $\Ver(\CG) = \lbrace v_0 \rbrace$ and $\Edg(\CG) = \emptyset$. Also, initialize lists $L:=(e\in\Edg(\CT)\mid o(e)=v_0)$, the edges adjacent to $v_0$, and $L':=\emptyset$. All vertices $v$ of $\CT$ are given by a matrix in vertex normal form $\vnf(v)$.
\item While $L$ is not empty:
\begin{enumerate}
\item For $i = 1$ to $\#L$ do:
\begin{enumerate}
 \item Let $e=(v, v^\prime)$ be the $i$th element in $L$.
\item Compute $\End_\Gamma(v^\prime)$.
\item If $\#\End_\Gamma(v^\prime) = q^2 - 1$ then:
\begin{enumerate} 
\item Add the vertex $v^\prime$ to $\Ver(\CG)$ and $e$ and $e^\star$ to $\Edg(\CG)$.
\item Store $(v',\End_\Gamma(v^\prime))$ as a vertex label for $v^\prime$.
\item Remove $e$ from $L$. 
\end{enumerate}
\item If $\#\End_\Gamma(v^\prime) = q - 1$, then for all $j<i$ do the following: 
\begin{enumerate}
\item Let $e'=(w, w^\prime)$ be the $j$th element in $L$.  
\item Compute $\Hom_\Gamma(v^\prime, w^\prime)$. 
\item If $\Hom_\Gamma(v^\prime, w^\prime) \neq \emptyset$, then do the following\\
$\bullet$ \ Add an edge $e'$ from $v$ to $w'$ to $\Edg(\CG)$, as well as its opposite.\\ 
$\bullet$ \ Give $e'$ the label $(e,g_e)$ for some $g_e\in\Hom_\Gamma(v^\prime, w^\prime)$ and give $e^{\prime\star}$ the label $(e,-1)$.\\
$\bullet$ \ Remove $(v, v^\prime)$ from $L$ and set $j:=i$.\\ 
$\bullet$ \ Remove $(w',\vnf(g_ev))$ from $L'$.\\
$\bullet$ \ If now $\degree_\CG(w')=q+1$, then remove $(w, w^\prime)$ from $L$.
\item Continue with the next $j$.
\end{enumerate}
\item If at the end of the $j$-loop we have $j=i$, then:
\begin{enumerate}
\item Add $v'$ to $\Ver(\CG)$, add $e$ and $e^\star$ to $\Edg(\CG)$. 
\item For all adjacent vertices $w \neq v$ of $v^\prime$ in $\MCT$ add $(v^\prime, w)$ to $L^\prime$.
\end{enumerate}
\end{enumerate}
\item Set $L := L^\prime$ and $L^\prime := \emptyset$.
\end{enumerate}
\item If $L$ is empty, return $\CG$.
\end{enumerate}

\begin{Rem}
One could randomly choose a vertex $[L(n,g)]$ as $v_0$ and replace it by $[L(n+1,g)]$, if it is projectively unstable. In this case, one would need to change the input of Algorithm~\ref{AlgQuotGraphNew} accordingly.
\end{Rem}
\begin{Rem}
The vertex label $(v,1)$ is used at all projectively stable vertices. For these, the stabilizer is the center of $\GL_2(K_\infty)$ intersected with $\Gamma$. There is no need to store this group each time. The same remark applies to all edge labels $(e,1)$. 

A maximal subtree $\CS$ of $\CG$ consists of all vertices and those edges with edge label $(e,1)$. It is completely realized within $\CT$. 

The edges with label $(e,g)$ are the edges which occur (ultimately) in $\EP$. The edge label $(e,-1)$ indicates that the opposite edge has a label $(e,g)$.
It is clear that the vertex and edge label allow one to easily construct an enhanced fundamental domain $(\CS,\CY)$ with an edge pairing and labels for the action of $\Gamma$ on~$\CT$.
\end{Rem}

\begin{Thm}\label{FirstAlgoSucceeds}
Suppose $\Gamma$ from Algorithm~\ref{AlgQuotGraphNew} satisfies the following conditions:
\begin{enumerate}
\item $d(v,g v)$ is even for all $g\in\Gamma,v\in\Ver(\CT)$,
\item for simplices $t$ of $\CT$ either $\bar\Gamma_t$ is trivial, or $t$ is a vertex and $\bar\Gamma_t\cong\BZ/(q+1)$,
\item $\Gamma\backslash\CT$ is finite.
\end{enumerate}
Then Algorithm~\ref{AlgQuotGraphNew} terminates and computes an enhanced fundamental domain for $\CT$ under~$\Gamma$.

By the results in Section~\ref{Sec-QuatGraph}, hypotheses (a)--(c) are satisfied if $\Gamma$ is the unit group of a maximal order of a quaternion algebra $D$ as in Assumption~\ref{MainAss}.
\end{Thm}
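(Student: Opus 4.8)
The plan is to verify correctness and termination in parallel, using a breadth-first exploration invariant. Throughout the run, $\CG$ grows layer by layer: after the $k$-th pass through the outer \textbf{while}-loop the list $L$ holds edges pointing from vertices at distance $k$ from $v_0$ to vertices at distance $k+1$, while $L'$ accumulates edges of the next layer. The central invariant to maintain by induction on $k$ is: (i) the vertices put into $\Ver(\CG)$ so far form a set of representatives for the $\Gamma$-orbits of vertices of $\CT$ at distance $\le k$ from $v_0$, and each is in vertex normal form; (ii) the edges labelled $(e,1)$ together with $\Ver(\CG)$ form a subtree $\CS$ of $\CT$ whose image in $\Gamma\backslash\CT$ is a subtree; (iii) each edge labelled $(e,g)$ has $o(e)\in\Ver(\CS)$, $g\,t(e)\in\Ver(\CS)$, and records a genuine $\Gamma$-identification; (iv) no orbit of a vertex at distance $\le k$ is represented twice. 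Hypothesis~(a) is what guarantees that when a new edge $e=(v,v')$ with $v'$ at distance $k+1$ is examined, every vertex in the orbit of $v'$ already seen sits at distance exactly $k+1$ as well — so it suffices to test $\Hom_\Gamma(v',w')$ only against the other targets $w'$ in the current list $L$, which is exactly what the algorithm does. This is the step I expect to require the most care: one must argue that if $\gamma v' = u$ for some already-inserted vertex $u$ of $\CG$, then in fact $u$ arose as a target $w'$ of some edge in the same layer, and that the \texttt{for} $j<i$ loop will detect this before $v'$ is wrongly added as a new vertex; the ordering of $L$ and the bookkeeping of $L'$ (in particular the removal of $(w',\vnf(g_e v))$ from $L'$) are precisely designed for this, and the verification is essentially combinatorial.

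Next I would handle the stabilizers. By hypothesis~(b), for a vertex $v'$ the group $\End_\Gamma(v')=\Gamma_{v'}$ is either the scalars $\BF_q^*$ (so $\#\End_\Gamma(v')=q-1$, the projectively stable case) or has order $q^2-1$ (the projectively unstable case); there is no other possibility, so the two branches in step 3(a)(iii)--(iv) of the algorithm are exhaustive. In the unstable case, Corollary~\ref{CorStabilisors} says $\Gamma_{v'}$ acts transitively on the neighbours of $v'$, hence the single edge $e,e^\star$ added to $\CG$ already accounts for all $q+1$ edges at $v'$ modulo $\Gamma$, and no further edges need to be pushed onto $L'$ from $v'$ — which is why the algorithm, in branch (iii), does not add anything to $L'$. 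The degenerate initialization in step~1 (replacing $v_0$ by $[L(1,0)]$, and the early exit) covers exactly the case where the whole quotient is the graph on two vertices joined by one edge, i.e. when $\Gamma\backslash\CT$ consists of two unstable vertices; one checks using (b) and Theorem~\ref{ThmQuotStr}(4) that this is the only situation in which both candidate base vertices are unstable.

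For termination I would invoke hypothesis~(c): $\Gamma\backslash\CT$ is finite, so it has finitely many vertices, say $N$, all at bounded distance from the image of $v_0$. By invariant~(i) the algorithm inserts at most $N$ vertices into $\CG$, so only finitely many layers are non-empty; once a layer produces no new vertex and no new paired edge beyond those needed to complete $\Gamma\backslash\CT$, the list $L'$ is empty and the loop stops. Concretely: the number of vertices inserted is non-decreasing and bounded by $\#\Ver(\Gamma\backslash\CT)<\infty$, and every pass of the \textbf{while}-loop that does not terminate strictly enlarges $\Ver(\CG)$ or adds a paired edge closing a loop of $\Gamma\backslash\CT$, of which there are finitely many ($h_1(\Gamma\backslash\CT)=g(R)<\infty$ by Theorem~\ref{ThmQuotStr}(2)); hence the process halts. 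Finally, when it halts, invariants (i)--(iv) show that $\CS$ (edges labelled $(e,1)$) is a tree of representatives, that $\CY$ (all of $\CG$ realized in $\CT$, gluing loose ends via the recorded $g$'s) satisfies the three conditions of Definition~\ref{DefFundamentalDomainWithSidePairing}(a), that the map $e\mapsto g_e$ on edges labelled $(e,g)$ is an edge pairing, and that the stored vertex labels $\End_\Gamma(v)$ give the simplex stabilizers; together with the edge stabilizers being scalar (again by (b), since every edge is projectively stable), this is precisely an enhanced fundamental domain. The last sentence of the theorem is immediate: Lemma~\ref{LemDistanceEven}, Proposition~\ref{PropStabilisors} together with the discussion of $\bar\Gamma_v,\bar\Gamma_e$, and Proposition~\ref{PropGraphFinite} give (a), (b), (c) respectively for $\Gamma=\Lambda^*$.
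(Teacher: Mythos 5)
Your global strategy (BFS layering, degree-plus-connectivity argument for surjectivity onto $\Gamma\backslash\CT$, termination from finiteness of the quotient, and attributing (a)--(c) to Lemma~\ref{LemDistanceEven}, Proposition~\ref{PropStabilisors} and Proposition~\ref{PropGraphFinite}) matches the paper's. But the heart of the theorem is exactly the step you defer as ``essentially combinatorial'': proving that no two distinct simplices of the output are $\Gamma$-equivalent, i.e.\ your invariant (iv) together with injectivity on edges. Moreover, the one justification you do offer is incorrect: hypothesis (a) does \emph{not} guarantee that ``every vertex in the orbit of $v'$ already seen sits at distance exactly $k+1$''. Since $\CT$ is a tree, hence bipartite, (a) only forces equal \emph{parity} of the distances from $v_0$; an already-inserted vertex in the orbit of $v'$ may perfectly well lie at distance $k-1$. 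This ``folding back'' case is real, it is precisely why the algorithm removes $(w',\vnf(g_e v))$ from $L'$, and it is why comparing $v'$ only against same-layer targets is not obviously sufficient; your intended claim ``if $\gamma v'=u$ for an already-inserted $u$, then $u$ arose as a target of a same-layer edge'' is false as stated. The paper's proof consists essentially of closing this gap: an induction on the distance of $v_1$ from the base vertex, preceded by a reduction from projectively unstable to projectively stable vertices via (b) (conjugate stabilizers, transitivity on neighbours, exclusion of the two-vertex configuration using (a)), in which a putative second representative at strictly larger distance forces either a contradiction with the induction hypothesis applied to geodesic predecessors, or the earlier removal from $L'$ of the very edge through which that representative would have entered $\CG$. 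None of this is carried out in your plan.

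A second, smaller omission: injectivity on \emph{edges} is a separate statement from your vertex invariant (iv), and it is needed before the degree-count argument can identify the image of $\CG$ with a full connected component of $\Gamma\backslash\CT$ (if two distinct edges of $\CG$ at a vertex had the same image, the degree in the quotient could be smaller than the degree recorded in $\CG$). The paper gives a separate argument for a pair of equivalent edges, again using the $L'$-removal and the fact that an unstable vertex never emits an edge pointing away from the base vertex; your invariant (iii) only records that each stored $g_e$ is a genuine identification, not that distinct stored edges are inequivalent. The remaining parts of your plan (the unstable branch via transitivity, the two-vertex early exit, termination, assembly into an enhanced fundamental domain, and the final sentence of the theorem) are fine, though note that in the general setting of hypotheses (a)--(c) you should not invoke Theorem~\ref{ThmQuotStr}, which is proved only for the quaternionic $\Gamma$; conditions (b) and (c) already suffice at those points.
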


\begin{Rem}\label{RemOnGenOfAlgQuotGraph}
Let us comment on the hypotheses made in Theorem~\ref{FirstAlgoSucceeds} so that Algorithm~\ref{AlgQuotGraphNew} terminates: Following the example set by the number field case, it seems natural to consider the following situation: Let $C$ be a smooth projective geometrically connected curve over $\BF_q$, let $S$ be a finite set of closed points of $C$, set  $\Aa=\Gamma(C\setminus S,\CO_C)$ and let $K=Q(\Aa)$ be the fraction field of $\Aa$. Let furthermore $D$ denote a division algebra over $K$ which is ramified at all but one point $\infty$ of $S$ and let $\Lambda$ be an $\Aa$-order. As can be deduced in this situation from \cite{Vigneras} by an argument similar to Proposition~\ref{PropImageDiscrete}, the group $\Lambda^*$ modulo its center acts discretely on the Bruhat-Tits tree for $\PGL_2(K_\infty)$. We expect but have not checked that hypotheses~(1)--(3) of Theorem~\ref{FirstAlgoSucceeds} are always met in this situation. What is missing in this general situation is an explicit algorithm to compute $\Hom_{\Lambda^*}(v,v')$. For this, see Remark~\ref{RemOnHomGvw}.
\end{Rem}

\begin{proof}[Proof of Theorem~\ref{FirstAlgoSucceeds}]
Let $\MCG$ be the output of Algorithm~\ref{AlgQuotGraphNew}.
We show that any two distinct simplices of $\MCG$ have labels $(t,?)$ and $(t',?)$ with $t'\notin\Gamma t$ and that for all
simplices $t$ of $\MCT$ there is a simplex of $\MCG$ whose label is $(t',?)$ for some $t'\in \Gamma t$.

For the first assertion, let $v_1, v_2 \in \Ver(\CT)$ be distinct first entries in labels of vertices of $\CG$ and suppose that $\gamma v_1 = v_2$ for some $\gamma \in \Gamma\setminus\Gamma_{v_1}$. We seek a contradiction. In a first reduction step we show that we may assume that $v_1$ is projectively stable: So suppose $v_1$ is projectively unstable. Then since 
\begin{eqnarray}\label{eq_stabilisors}
\Stab_\Gamma(v_2) = \gamma \Stab_\Gamma(v_1) \gamma^{-1},
 \end{eqnarray}
also $v_2$ has to be projectively unstable. Hence both $v_1$ and $v_2$ are terminal vertices in $\CG$. Let $v_1^\prime$ and $v_2^\prime$ be their unique adjacent vertices in $\CG$. Since $v_1^\prime$ is adjacent to $v_1$, it follows that $\gamma v_1^\prime$ is adjacent to $\gamma v_1 = v_2$. By condition (b) the stabilizer $\Stab_\Gamma(v_2)$ acts transitively on the vertices adjacent to $v_2$. Hence there exists $\gamma^\prime\in \Stab_\Gamma(v_2)$ with 
\begin{equation}\label{Homv1'v2'}
\gamma^\prime \gamma v_1^\prime = v_2^\prime,
\end{equation} 
and so $v_1^\prime$ and $v_2^\prime$ are $\Gamma$-equivalent. If $v_1^\prime$ and $v_2^\prime$ were also projectively unstable and therefore terminal vertices in $\CG$, then, since $\CG$ is connected, $\CG$ would have to be the graph consisting of the two vertices $v_1, v_2$ and one edge connecting them. This contradicts condition~(a). Therefore $v_1^\prime$ and $v_2^\prime$ must be projectively stable and $\Gamma$-equivalent. To conclude the reduction, observe that we cannot have $v_1'=v_2'$, since in this case we must have $\gamma'\gamma\in\BF_q^*$ from (\ref{Homv1'v2'}). But $\gamma'\gamma $ maps $v_1$ to $v_2$ and this would contradict $v_1\neq v_2$.

Now suppose $v_1$ is projectively stable. Then by equation~\eqref{eq_stabilisors} so is $v_2$. Let $v$ be the initial vertex of the algorithm and let $i_1 = d(v, v_1)$ and $i_2 = d(v, v_2)$. We prove the assertion by induction over $i_1$: If $i_1 = 1$ then also $i_2 = 1$  because of condition (a). Hence the vertices $v_1$ and $v_2$ both have the same distance $1$ from $v$ and since $\Hom_\Gamma(v_1, v_2) = q - 1$, Algorithm~\ref{AlgQuotGraphNew} with the first choice of $L$ rules out that they both lie in $\CG$. This is a contradiction. The same reasoning rules out $i_1 = i_2$ for any $i_1, i_2 \geq 1$.

Suppose $i_1 > 1$. By condition~(a) and the previous line we may assume $i_1 = i_2 + 2 m$ for some $m \in \BZ_{\geq 1}$. Let $v_1^\prime$ be the vertex on the geodesic from $v_1$ to $v$ so that $d(v, v_1^\prime) = i_1 - 1$. Then by the construction of $\CG$ we have $v_1^\prime \in \CG$.  The vertex $\gamma v_1^\prime$ is adjacent to $\gamma v_1 = v_2$. Now observe that $\gamma v_1^\prime$ does not belong to $\CG$ because otherwise we could apply the  induction hypothesis to $v_1^\prime, \gamma v_1^\prime$, using $d(v_1^\prime, v) = i_1 - 1$ and $d(\gamma v_1^\prime, v) \leq i_2 + 1$ to obtain a contradiction.

It follows that $v_2^\prime := \gamma v_1^\prime \not \in \CG$. Since by construction the geodesic from $v$ to $v_2$ lies on $\CG$ we have $d(v_2^\prime, v) = i_2 + 1$. Now by  the algorithm that defines $\CG$ the vertex $v_2'$ must be equivalent to a vertex of distance $i_2-1$, i.e., there are $\gamma^\prime \in \Gamma, v_2^{\prime \prime} \in \CG$ with $d(v_2^{\prime \prime}, v) = i_2 - 1$ such that $v_2^{\prime \prime} = \gamma^\prime v_2^\prime$. But then we apply the induction hypothesis to $v_1^\prime, v_2^{\prime \prime}$ and again obtain a contradiction. This concludes the proof of the first assertion for vertices.

Suppose now that $e=(v_0,v_1), e^\prime(v'_0,v'_1)$ occur as first entries in $\Edg(\CG)$, lie in the same $\Gamma$-orbit, are distinct and occur within edge labels of $\CG$. Let $\gamma$ be in $\Gamma$ with $e^\prime = \gamma e$. Note that not all the vertices $v_i$ and $v_i'$ must occur in vertex labels from $\CG$ but each edge must at least have one vertex that does -- see step (c)(i)4.C. Suppose after possibly changing the orientation of edges and the indices that $v_0$ has minimal distance from $v$. By construction of $\CG$ the vertex $v_0$ occurs in a vertex label. If $v_0'=\gamma v_0$ occurs in a vertex label of $\CG$, then by the case already treated, we must have $v_0=v_0'$. Since $e\neq e'$ it follows that $v_0$ is projectively unstable. But then the algorithm does not yield an edge starting at $v_0$ and ending at a vertex $v_1$ with $d(v,v_1)>d(v,v_0)$. This is a contradiction. 

It follows that $v_0'=\gamma v_0$ does not occur in a vertex label. Hence $v_1'$ must occur in a vertex label. By essentially the argument just given, $v_1$ can also not occur in a vertex label. Hence $(e,\gamma)$ must be an edge label and moreover $d(v,v_1')=d(v,v_0)+1=d(v,v_0')-1$. But then in step (c)(i)4.C of Algorithm~\ref{AlgQuotGraphNew} the edge $e'$ must have been removed from the list $L'$ and so it cannot occur in a label of an edge of $\CG$.

\smallskip

We finally come to the second assertion: By construction, $\CG$ defines a connected subgraph of $\Gamma \backslash \MCT$, since we already showed that there are no $\Gamma$-equivalent simplices in $\CG$. Moreover, at any vertex of this subgraph the degree within $\CG$ and within $\Gamma \backslash \MCT$ is the same. Hence $\CG$ defines a connected component of $\Gamma \backslash \MCT$. But
$\MCT$ and hence $\Gamma \backslash \MCT$ are connected and thus $\CG = \Gamma \backslash \MCT$. 
\end{proof}

We further describe an algorithm to compute for any $v^\prime \in \Ver(\MCT)$ a
$\Gamma$-equivalent vertex $v^{\prime \prime} \in \MCG$. This can be done
in time linear to the distance from $v^\prime$ to $\MCG$. For this algorithm
we need the stabilizers of the terminal vertices of $\MCG$ and the elements $\gamma \in \Hom_\Gamma(v_i, v_j)$,
which we both stored as vertex and edge labels during the computation of $\MCG$. We call this
algorithm the reduction algorithm.
We need to be able to do the following:
\begin{enumerate}
\item Find the geodesic from $v^\prime$ to $v$. This was discussed in Remark~\ref{RemGeodesivToStandVertex}.
\item Determine the extremities of a given geodesic in $\MCG$. Since the vertices in $\MCG$ are all stored in 
the vertex normal form, this can be done in constant time.
\end{enumerate}

\begin{Alg}\label{AlgReduction} {\bf (The reduction algorithm)}\\[.2em]
{\em Input:} $v^\prime\in\Ver(\MCT)$ and $\MCG$ the output of Algorithm~\ref{AlgQuotGraphNew} with initial vertex $v$.\\[.2em]
{\em Output:} A tuple $(w, \gamma) \in \Ver(\MCG) \times \Gamma$ with $v^\prime = \gamma w$.\\[.2em]
{\em Algorithm:}
\end{Alg}

\begin{enumerate}
 \item Let $\MCT_0 : (v^\prime = v_m, v_{m - 1}, \dots, v)$ be the geodesic  
from $v^\prime$ to $v$. Let $v_i$ be the vertex of $\MCT_0 \cap \MCG$ closest to $v'$. 
Let $r = m - i$, this is the distance from $v^\prime$ to $\MCG$.

\item If $r = 0$, we have $v^\prime \in \MCG$. Then return $(v^\prime, 1)$.

\item If $r > 0$, we distinguish two cases:

\begin{enumerate}
 \item If $v_i$ is projectively unstable, by a for-loop through the elements $\gamma$ in $\Stab_\Gamma(v_i)$, find an element $\gamma\in\Gamma$ such that $\gamma v_{i + 1}$ is a vertex of $\MCG$. Replace $v^\prime$ by $\gamma v^\prime$ and apply the algorithm recursively to get some
pair $(w, \tilde \gamma)$ in $\Ver(\MCG) \times \Gamma$. Return $(w, \tilde \gamma \gamma )$.

\item If $v_i$ is projectively stable, run a for-loop through the vertices $\tilde v$ in $\MCG$ adjacent to
$v_i$ to find the unique $\tilde v$ such that either: (i), the edge label of the edge from $\tilde v$ to $v_i$ is of the form $(e,\gamma)$ for some $\gamma\in\Gamma$ with $\gamma t(e)=v_i$ and $\gamma o(e)=v_{i+1}$, or (ii), the edge label from $v_i$ to $\tilde v$ is of the form $(e,\gamma)$ for some $\gamma\in\Gamma$ with $o(e)=v_i$ and $t(e)=v_{i+1}$. In case (i),  replace $v'$ by $\gamma^{-1} v'$ and apply the algorithm recursively to get some pair $(w,\tilde\gamma)$ for $\gamma^{-1} v'$. Return $(w,\tilde\gamma\gamma^{-1})$. In case (ii), replace $v'$ by $\gamma v'$ and apply the algorithm recursively to get some pair $(w,\tilde\gamma)$ for $\gamma v'$. Return $(w,\tilde\gamma\gamma)$.
\end{enumerate}
\end{enumerate}

\begin{Prop} Let $v^\prime$ in $\MCT$ and let $\MCG$ be the output of Algorithm~\ref{AlgQuotGraphNew} under the hypothesis of Theorem~\ref{FirstAlgoSucceeds} with initial vertex $v$.
Then Algorithm~\ref{AlgReduction} computes a $\Gamma$-equivalent vertex $w$ of $v^\prime$ and an element
$\gamma \in \Gamma$ with $\gamma v^\prime = w$. It requires $\MCO(n^3 \deg(r)^2)$ additions and multiplications
in $\BF_q$ where $n$ is the distance of $v^\prime$ to $\MCG$.
\end{Prop}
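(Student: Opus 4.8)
\textbf{Setup.} Algorithm~\ref{AlgReduction} is recursive in the quantity $r$ computed in step~1, so the first thing I would pin down is that $r$ really is $d(v',\mathcal{G})$. Identify the vertices of $\mathcal{G}$ with their stored representatives; by the remarks after Algorithm~\ref{AlgQuotGraphNew} these form the tree of representatives $\mathcal{S}\subset\mathcal{T}$, a subtree of $\mathcal{T}$ containing the base vertex $v$. Since $\mathcal{T}$ is a tree and $\mathcal{S}$ a subtree through $v$, the geodesic $\mathcal{T}_0$ from $v'$ to $v$ meets $\mathcal{S}$ in a terminal segment whose first vertex (seen from $v'$) is the nearest--point projection $\operatorname{proj}_{\mathcal{S}}(v')$; hence $v_i=\operatorname{proj}_{\mathcal{S}}(v')$ and $r=d(v',v_i)=d(v',\mathcal{G})$. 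I would then prove termination and correctness together, by induction on $r$, by showing that each recursive call is made on a vertex at distance $\le r-1$ from $\mathcal{G}$ and that the group element returned conjugates the computed vertex back to the input vertex.

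\textbf{Correctness.} If $r=0$ then $v'=v_m\in\Ver(\mathcal{G})$ and $(v',1)$ is valid. For $r\ge1$ there are two cases. If $v_i$ is projectively unstable, it is terminal in $\mathcal{G}$ by Theorem~\ref{ThmQuotStr}(3); its unique neighbour $\tilde v$ lies in $\mathcal{S}$ and, inside $\mathcal{T}$, is one of the $q+1$ neighbours of $v_i$, all of which map to $\tilde v$ in $\mathcal{G}$. By Corollary~\ref{CorStabilisors} $\Stab_\Gamma(v_i)$ acts transitively on these neighbours, so the for--loop finds $\gamma\in\Stab_\Gamma(v_i)$ with $\gamma v_{i+1}=\tilde v\in\Ver(\mathcal{G})$; applying $\gamma$ to $\mathcal{T}_0$ puts $\tilde v$ at distance $r-1$ of $\gamma v'$, so $d(\gamma v',\mathcal{G})\le r-1$ and the recursive call is legitimate; if it returns $(w,\tilde\gamma)$ with $\tilde\gamma\gamma v'=w$, then $(w,\tilde\gamma\gamma)$ is the correct output. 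If $v_i$ is projectively stable, then $\bar\Gamma_{v_i}$ is trivial, so the map from the $q+1$ neighbours of $v_i$ in $\mathcal{T}$ to the $q+1$ edges of $\mathcal{G}$ at $v_i$ (using Theorem~\ref{ThmQuotStr}(3a)) is a bijection; since $v_{i+1}\notin\Ver(\mathcal{S})$, the edge of $\mathcal{G}$ matching $(v_i,v_{i+1})$ is not in $\mathcal{S}$, hence is paired and carries a label $(e,\gamma)$ with $\gamma\neq1$. A short computation using $\bar\Gamma_{v_i}=1$ shows that, according to whether the stored representative $e\in\Edg(\mathcal{T})$ has origin $v_i$ or $\tilde v$, one is exactly in case~(ii) (then $e=(v_i,v_{i+1})$ and $\gamma t(e)\in\Ver(\mathcal{S})$) or case~(i) (then $\gamma o(e)=v_{i+1}$ and $\gamma t(e)=v_i$); in either case applying $\gamma$, resp.\ $\gamma^{-1}$, to $\mathcal{T}_0$ moves a vertex of $\mathcal{S}$ to distance $r-1$ of the new point, so the recursion is again legitimate and the composed element returned is correct. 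This finishes the induction; in particular the algorithm terminates after at most $n$ recursive calls.

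\textbf{Complexity.} The recursion depth is $\le n$. Every vertex encountered lies within $n+\diam(\mathcal{G})$ of $v$, because the distance to $\mathcal{G}$ only decreases along the recursion and $v\in\mathcal{S}$ is at distance $\le\diam(\mathcal{G})$ from every stored vertex; using the bound $\diam(\mathcal{G})=\mathcal{O}(\deg r)$, which follows from the Ramanujan property (cf.\ Section~\ref{Sec-Complex}), every $2\times2$ matrix over $K_\infty$ that the algorithm touches is handled to $\pi$--adic precision $\mathcal{O}(n+\deg r)$. Each level performs only $\mathcal{O}(1)$ such matrix operations — the for--loop of step~3(a) runs over $\Stab_\Gamma(v_i)\cong\BF_{q^2}^*$ and that of step~3(b) over the $\le q+1$ neighbours of $v_i$, both of constant length since $q$ is fixed — and the dominant one, row--reduction to vertex normal form, costs $\mathcal{O}((n+\deg r)^2)$ additions and multiplications in $\BF_q$. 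Summing over the $\le n$ levels, and adding the $\mathcal{O}(n^2(\deg r)^2)$ cost of composing the $\le n$ returned elements of $\Gamma$ (each of height $\mathcal{O}(\deg r)$), the total is $\mathcal{O}\!\big(n(n+\deg r)^2+n^2(\deg r)^2\big)=\mathcal{O}(n^3\deg(r)^2)$.

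\textbf{Main obstacle.} The complexity count is routine once the diameter bound is available; the delicate point is the projectively stable case of the correctness argument — matching the geodesic edge $(v_i,v_{i+1})$ with the correct stored paired edge, getting the orientations and the side of $\gamma^{\pm1}$ right, and checking that the new point is genuinely one step closer to $\mathcal{G}$.
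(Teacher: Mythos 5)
Your proposal is correct and follows essentially the same route as the paper: correctness and termination come from the observation that each recursive step strictly decreases $d(v^\prime,\MCG)$ (you simply spell out the two cases, including the paired-edge/orientation bookkeeping at projectively stable vertices, which the paper leaves implicit), and the complexity comes from bounding the heights of all matrices involved via the $\MCO(\deg r)$ height bound on the stored labels (Corollary~\ref{CorSize}/Proposition~\ref{PropChebtor}, ultimately the Ramanujan diameter bound) and summing over at most $n$ steps. Your bookkeeping differs only in that you bound the vertex-normal-form heights by $\MCO(n+\deg r)$ via distances to the base vertex rather than by accumulated label heights as in the paper, which gives a marginally sharper intermediate estimate but the same stated bound $\MCO(n^3\deg(r)^2)$.
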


\begin{proof}
In both cases of the algorithm we find an edge label that moves $v^\prime$ closer to $\MCG$. 
Since each step of the algorithm decreases the distance $d(v^\prime, \MCG)$, the algorithm terminates
after at most $n$ steps. From Corollary~\ref{CorSize} and Proposition~\ref{PropChebtor} it follows that at step $j$ one multiplies a matrix of height $(j-1)\frac{5}{2} \deg(r)$ with one of height $\frac{5}{2} \deg(r)$. Further one has to compute the vertex normal form of a matrix of height at most $(j+1) \frac{5}{2} \deg(r)$. This takes at most $(8j+8j^2) \frac{5}{2}^2 \deg(r)^2$ operations in $\BF_q$. Summing over $j$, the asserted bound follows.
\end{proof}

\begin{Ex}\label{ExQuotGraph}
In Figure~\ref{fig:AlgQuotGraphExample} we give an example of the Algorithm~\ref{AlgQuotGraphNew}, where $q=5$ and
$ r=T(T+1)(T+2)(T+3)$. We start with $\begin{pmatrix} 1/T & 0 \\ 0 & 1 \end{pmatrix}$ as the initial vertex $v$.
The adjacent vertices correspond to the matrix $\begin{pmatrix} 1 & 0 \\ 0 & 1 \end{pmatrix}$, which is 
a terminal vertex, and the five matrices $\begin{pmatrix} 1/T^2 & \alpha 1/T \\ 0 & 1 \end{pmatrix}$ 
with $\alpha \in \BF_5$. Using the algorithm described in Section~\ref{SecCompHom} we compute that $\begin{pmatrix} 1/T^2 & 0 \\ 0 & 1 \end{pmatrix}$ is the only
projectively unstable vertex and 
$$\#\Hom_\Gamma( \begin{pmatrix} 1/T^2 & 1/T \\ 0 & 1 \end{pmatrix}, 
\begin{pmatrix} 1/T^2 & 4 1/T \\ 0 & 1 \end{pmatrix}) = 4,$$
 
$$\#\Hom_\Gamma( \begin{pmatrix} 1/T^2 & 2 \pi \\ 0 & 1 \end{pmatrix}, 
\begin{pmatrix} 1/T^2 & 3 \pi \\ 0 & 1 \end{pmatrix}) = 4.$$
This finishes Step 1 of the algorithm, as depicted in Figure~\ref{fig:AlgQuotGraphExample}. In Step 2 we
 then continue with the eight indicated vertices of level $3$. In this case, the algorithm terminates after 3 steps.
\end{Ex}

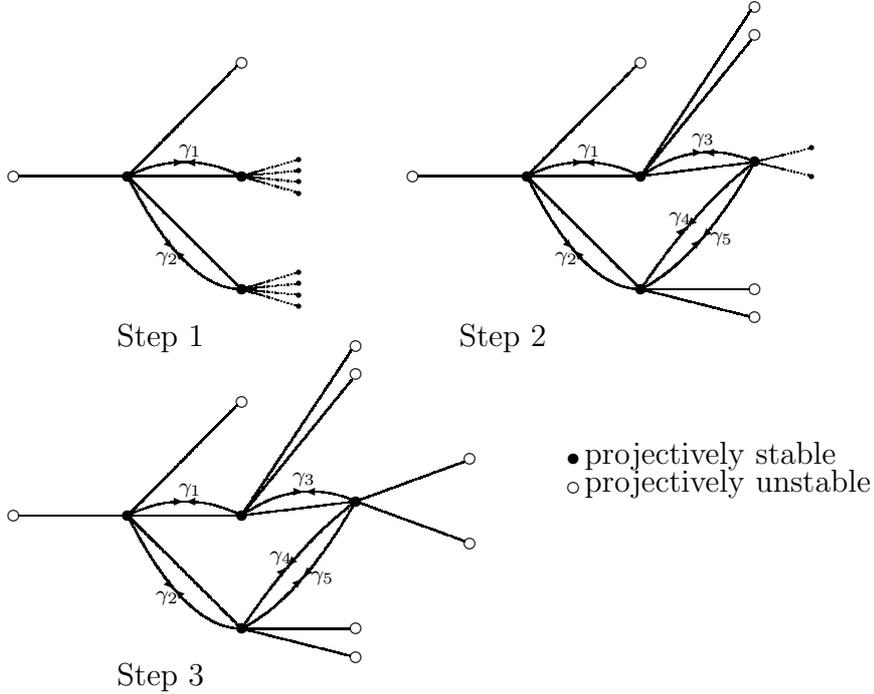
\begin{figure}[ht]
\begin{picture}(15, 12)
 
\put(3, 6){Step 1}
\put(9, 6){Step 2}
\put(3, 0){Step 3}

\put(11,4){\circle*{0.2}}
\put(11.2,3.92){projectively stable}

\put(11,3.5){\circle{0.2}}
\put(11.2,3.42){projectively unstable}

\put(0.5,6.5){\setlength{\unitlength}{0.75cm}
\begin{picture}(6,5)
\put(2.5,2.5){\circle*{0.2}}
\put(0.5,2.5){\circle{0.2}}
\put(4.5,2.5){\circle*{0.2}}
\put(4.5,4.5){\circle{0.2}}
\put(4.5,0.5){\circle*{0.2}}
\qbezier(0.6,2.5)(0.6,2.5)(2.5,2.5)
\qbezier(2.5,2.5)(2.5,2.5)(4.5,2.5)
\qbezier(2.5,2.5)(2.5,2.5)(4.5,0.5)
\qbezier(2.5,2.5)(2.5,2.5)(4.42,4.45)
\qbezier(2.5,2.5)(3.5,3)(4.5,2.5)

\put(3.5,2.74){\vector(1,0){0}}
\put(3.5,2.74){\vector(-1,0){0}}

\put(3.32,1.2){\vector(1,-1){0}}
\put(3.32,1.2){\vector(-1,1){0}}
\put(3.4,2.9){\begin{scriptsize}$\gamma_1$\end{scriptsize}}
\put(3,1){\begin{scriptsize}$\gamma_2$\end{scriptsize}}

\qbezier(2.5,2.5)(3.5,0.5)(4.5,0.5)

\put(5.5,2.8){\circle*{0.1}}
\put(5.5,2.6){\circle*{0.1}}
\put(5.5,2.4){\circle*{0.1}}
\put(5.5,2.2){\circle*{0.1}}

\qbezier[30](4.5,2.5)(4.5,2.5)(5.5,2.8)
\qbezier[30](4.5,2.5)(4.5,2.5)(5.5,2.6)
\qbezier[30](4.5,2.5)(4.5,2.5)(5.5,2.4)
\qbezier[30](4.5,2.5)(4.5,2.5)(5.5,2.2)

\put(5.5,0.8){\circle*{0.1}}
\put(5.5,0.6){\circle*{0.1}}
\put(5.5,0.4){\circle*{0.1}}
\put(5.5,0.2){\circle*{0.1}}

\qbezier[30](4.5,0.5)(4.5,0.5)(5.5,0.8)
\qbezier[30](4.5,0.5)(4.5,0.5)(5.5,0.6)
\qbezier[30](4.5,0.5)(4.5,0.5)(5.5,0.4)
\qbezier[30](4.5,0.5)(4.5,0.5)(5.5,0.2)
\end{picture} }

\put(7.5,6.5){\setlength{\unitlength}{0.75cm}
\begin{picture}(6,5)
\put(2.5,2.5){\circle*{0.2}}
\put(0.5,2.5){\circle{0.2}}
\put(4.5,2.5){\circle*{0.2}}
\put(4.5,4.5){\circle{0.2}}
\put(4.5,0.5){\circle*{0.2}}
\qbezier(0.6,2.5)(0.6,2.5)(2.5,2.5)
\qbezier(2.5,2.5)(2.5,2.5)(4.5,2.5)
\qbezier(2.5,2.5)(2.5,2.5)(4.5,0.5)
\qbezier(2.5,2.5)(2.5,2.5)(4.42,4.45)
\qbezier(2.5,2.5)(3.5,3)(4.5,2.5)

\put(3.5,2.74){\vector(1,0){0}}
\put(3.5,2.74){\vector(-1,0){0}}

\put(3.32,1.2){\vector(1,-1){0}}
\put(3.32,1.2){\vector(-1,1){0}}
\put(3.4,2.9){\begin{scriptsize}$\gamma_1$\end{scriptsize}}
\put(3,1){\begin{scriptsize}$\gamma_2$\end{scriptsize}}

\qbezier(2.5,2.5)(3.5,0.5)(4.5,0.5)

\put(6.5,5.5){\circle{0.2}}
\put(6.5,5){\circle{0.2}}
\put(6.5,0.5){\circle{0.2}}
\put(6.5,0){\circle{0.2}}

\qbezier(4.5,2.5)(4.5,2.5)(6.43,5.43)
\qbezier(4.5,2.5)(4.5,2.5)(6.43,4.9)
\qbezier(4.5,0.5)(4.5,0.5)(6.4,0.5)
\qbezier(4.5,0.5)(4.5,0.5)(6.4,0.02)

\put(6.5,2.75){\circle*{0.2}}

\qbezier(4.5,2.5)(4.5,2.5)(6.5,2.75)
\qbezier(4.5,2.5)(5.5,3.2)(6.5,2.75)
\qbezier(4.5,0.5)(5.5,2)(6.5,2.75)
\qbezier(4.5,0.5)(5.5,1)(6.5,2.75)

\put(7.5,3){\circle*{0.1}}
\put(7.5,2.5){\circle*{0.1}}
\qbezier[30](6.5,2.75)(6.5,2.75)(7.5,3)
\qbezier[30](6.5,2.75)(6.5,2.75)(7.5,2.5)

\put(5.6,2.92){\vector(1,0){0}}
\put(5.6,2.92){\vector(-1,0){0}}

\put(5.3,1.6){\vector(-1,-1){0}}
\put(5.3,1.6){\vector(1,1){0}}
\put(5.57,1.4){\vector(-1,-1){0}}
\put(5.57,1.4){\vector(1,1){0}}

\put(5.4,3.1){\begin{scriptsize}$\gamma_3$\end{scriptsize}}
\put(5,1.7){\begin{scriptsize}$\gamma_4$\end{scriptsize}}
\put(5.72,1.35){\begin{scriptsize}$\gamma_5$\end{scriptsize}}

\end{picture} }

\put(0.5,0.5){\setlength{\unitlength}{0.75cm}
\begin{picture}(6,5)

\put(2.5,2.5){\circle*{0.2}}
\put(0.5,2.5){\circle{0.2}}
\put(4.5,2.5){\circle*{0.2}}
\put(4.5,4.5){\circle{0.2}}
\put(4.5,0.5){\circle*{0.2}}
\qbezier(0.6,2.5)(0.6,2.5)(2.5,2.5)
\qbezier(2.5,2.5)(2.5,2.5)(4.5,2.5)
\qbezier(2.5,2.5)(2.5,2.5)(4.5,0.5)
\qbezier(2.5,2.5)(2.5,2.5)(4.42,4.45)
\qbezier(2.5,2.5)(3.5,3)(4.5,2.5)

\put(3.5,2.74){\vector(1,0){0}}
\put(3.5,2.74){\vector(-1,0){0}}

\put(3.32,1.2){\vector(1,-1){0}}
\put(3.32,1.2){\vector(-1,1){0}}
\put(3.4,2.9){\begin{scriptsize}$\gamma_1$\end{scriptsize}}
\put(3,1){\begin{scriptsize}$\gamma_2$\end{scriptsize}}

\qbezier(2.5,2.5)(3.5,0.5)(4.5,0.5)

\put(6.5,5.5){\circle{0.2}}
\put(6.5,5){\circle{0.2}}
\put(6.5,0.5){\circle{0.2}}
\put(6.5,0){\circle{0.2}}

\qbezier(4.5,2.5)(4.5,2.5)(6.43,5.43)
\qbezier(4.5,2.5)(4.5,2.5)(6.43,4.9)
\qbezier(4.5,0.5)(4.5,0.5)(6.4,0.5)
\qbezier(4.5,0.5)(4.5,0.5)(6.4,0.02)

\put(6.5,2.75){\circle*{0.2}}

\qbezier(4.5,2.5)(4.5,2.5)(6.5,2.75)
\qbezier(4.5,2.5)(5.5,3.2)(6.5,2.75)
\qbezier(4.5,0.5)(5.5,2)(6.5,2.75)
\qbezier(4.5,0.5)(5.5,1)(6.5,2.75)

\put(8.5,3.5){\circle{0.2}}
\put(8.5,2){\circle{0.2}}
\qbezier(6.5,2.75)(6.5,2.75)(8.4,3.45)
\qbezier(6.5,2.75)(6.5,2.75)(8.4,2.05)

\put(5.6,2.92){\vector(1,0){0}}
\put(5.6,2.92){\vector(-1,0){0}}

\put(5.3,1.6){\vector(-1,-1){0}}
\put(5.3,1.6){\vector(1,1){0}}
\put(5.57,1.4){\vector(-1,-1){0}}
\put(5.57,1.4){\vector(1,1){0}}

\put(5.4,3.1){\begin{scriptsize}$\gamma_3$\end{scriptsize}}
\put(5,1.7){\begin{scriptsize}$\gamma_4$\end{scriptsize}}
\put(5.72,1.35){\begin{scriptsize}$\gamma_5$\end{scriptsize}}

\end{picture} }

\end{picture}
\caption{Example: $q=5$,  $r = T(T+1)(T+2)(T+3)$}
\label{fig:AlgQuotGraphExample}
\end{figure}

\begin{Ex}
Consider $K = \BF_5(T)$ and the two discriminants $r_1 = (T^2 + T + 1)\cdot T \cdot (T+1) \cdot (T+2)$ and $r_2 = (T^2 + 2)\cdot T \cdot (T+1) \cdot (T+2)$. Let $\Gamma_i$ be the group of units of a maximal order of a quaternion algebra of discriminant $r_i$ for $i \in \{ 1, 2\}$. Then $\Gamma_1 \backslash \MCT$ has $14$ cycles of length $2$, while $\Gamma_2 \backslash \MCT$ has $10$ cycles of length $2$. Hence these two graphs are not isomorphic. This answers a question of Papikian who asked for an example in which the lists of degrees of the factors of $r$ and $r'$ are the same but where the graphs are non-isomorphic. This is similar to~\cite[Rem~2.22]{GekelerNonnengard} where congruence subgroups $\Gamma_0(\Fn)$ and $\Gamma_0(\Fn')$ of $\GL_2(\Aa)$ are considered.
\end{Ex}

\section{Computing $\Hom_\Gamma(v, w)$}\label{SecCompHom}

Let $r,\alpha,\Lambda\subset D=\QuatAlg{\alpha}{r}{K},\iota$ be as at the end of Section~\ref{Sec-QuatAlg}; recall also that $\pi=1/T$ is a uniformizer of $K_\infty$.

\begin{Lem}
To compute $\sqrt{\alpha}$ in $K_\infty = \BFq((\pi))$ to $n$ digits of accuracy one requires $\CO(n^3)$ additions and multiplications in $\BF_q$.
\end{Lem}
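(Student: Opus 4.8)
The plan is to compute $\sqrt{\alpha}$ by Newton iteration (Hensel lifting) in the complete local ring $\CO_\infty = \BFq[[\pi]]$, which doubles the number of correct digits at each step, and then to account for the cost of polynomial multiplication in $\BFq[\pi]$. First I would note that since $\alpha\in\BFq[T]$ is monic of even degree $2d$, writing $\alpha = T^{2d}(1 + c_1 T^{-1} + \dots + c_{2d}T^{-2d})$ with $c_i\in\BFq$, we have $\alpha = \pi^{-2d} u$ with $u = 1 + c_1\pi + \dots + c_{2d}\pi^{2d}\in 1 + \pi\BFq[[\pi]]$ a unit with constant term $1$. Thus $\sqrt{\alpha} = \pi^{-d}\sqrt{u}$, and it suffices to compute $\sqrt{u}$ to $n$ digits; here we use that $q$ is odd so that $2$ is invertible in $\BFq$ and the binomial series for $\sqrt{u}$ converges $\pi$-adically with a well-defined square root $\equiv 1\pmod\pi$.

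Next I would carry out the Newton iteration for the equation $f(x) = x^2 - u = 0$, or more conveniently iterate on $g(x) = 1/x^2 - 1/u$ to avoid divisions, but either way: starting from $x_0 = 1$, the recursion $x_{k+1} = \frac12\bigl(x_k + u x_k^{-1}\bigr)$ (truncating all power series mod $\pi^{2^{k+1}}$) satisfies $x_k \equiv \sqrt u \pmod{\pi^{2^k}}$, by the standard Hensel/Newton quadratic-convergence argument ($f'(1) = 2$ is a unit since $q$ is odd). After $\lceil\log_2 n\rceil$ steps one has $n$ correct digits. The key cost estimate is then that step $k$ involves a constant number of multiplications and one inversion of power series truncated mod $\pi^{2^{k+1}}$; a truncated inversion also reduces to a logarithmic number of truncated multiplications by the same Newton idea, so step $k$ costs $O(M(2^k))$ operations in $\BFq$ where $M(m)$ is the cost of multiplying two polynomials of degree $<m$ over $\BFq$. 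Summing the geometric-type series $\sum_{k\le\log_2 n} M(2^k)$ over $k$ gives $O(M(n))$ if $M$ is superlinear, hence $O(n^2)$ using schoolbook multiplication $M(m) = O(m^2)$.

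To land exactly on the stated bound $\CO(n^3)$, I would simply observe that this is a (deliberately crude) upper bound: even the naive approach of computing the digits of $\sqrt u$ one at a time from the recursion $s_{m+1} = s_m - \tfrac12(s_m^2 - u \bmod \pi^{m+1})$, where extracting the next digit requires evaluating $s_m^2 \bmod \pi^{m+1}$ at cost $O(m^2)$, gives total cost $\sum_{m=1}^n O(m^2) = O(n^3)$. So whichever multiplication routine and iteration scheme one uses, $\CO(n^3)$ operations in $\BFq$ suffice; I would present the Newton-iteration version and remark that the cost is in fact $O(n^2)$, but state the lemma with the safe bound $\CO(n^3)$ as this is all that is needed for the complexity analysis in Section~\ref{Sec-Complex}. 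The only mild subtlety — the \emph{main} point to get right — is bookkeeping the truncation levels so that at each Newton step one never carries more than $O(2^k)$ coefficients, and verifying that the inversion inside each step does not blow up the estimate; everything else is the standard Hensel's-lemma lifting argument specialized to $\BFq((\pi))$ with $q$ odd.
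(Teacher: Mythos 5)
Your proposal is correct, and in fact it contains the paper's argument as your ``fallback.'' What the paper actually does is the linearly convergent digit-by-digit scheme: after normalizing to the $1$-unit $\pi^{\deg\alpha}\alpha$ it runs $n$ correction steps, where step $k$ computes $u_{k-1}^2-\pi^{\deg\alpha}\alpha$ (cost $O(n^2)$ field operations) and subtracts half of the $k$-th digit from $u_{k-1}$; since the approximation is a $1$-unit, no power-series inversion is ever needed, and the total is $n\cdot O(n^2)=O(n^3)$. This is exactly your recursion $s_{m+1}=s_m-\tfrac12\bigl((s_m^2-u)\bmod \pi^{m+1}\bigr)$. Your primary route --- the truncation-doubling Newton iteration $x_{k+1}=\tfrac12(x_k+u x_k^{-1})$ with truncated inversion handled by a second Newton loop --- is a genuinely sharper presentation, giving $O(M(n))=O(n^2)$ with schoolbook multiplication, at the price of the extra bookkeeping of truncation levels and inversion that you correctly flag; the paper's cruder scheme trades that speed for avoiding inversion entirely. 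Since the $\sqrt\alpha$ precomputation is dominated elsewhere in the complexity analysis (the linear algebra in Theorem~\ref{PropHomCompCharOdd} already costs $O(n^4)$), the safe $O(n^3)$ bound is all that is used, so either version suffices.
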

\begin{proof}
Let $m=\deg(\alpha)$. It suffices to compute the square root $u$ of the $1$-unit $\pi^m\alpha$ to $n$ digits accuracy. This can be done by the Newton iteration in $n$ steps starting with the approximation $u_0=1$. The $k$-th approximation is $u_{k}=u_{k-1}-\frac{u_{k-1}^2-\pi^m\alpha}{u_{k-1}}$. From the right hand expression one only needs to compute $u_{k-1}^2-\pi^m\alpha$ which requires $n^2$ operations in $\BF_q$. The $k$-th digit past the decimal point divided by $2$ has then to be subtracted from~$u_{k-1}$.
\end{proof}

To state the following result, we define a (logarithmic) height $\|\,\|$ on elements of $\Lambda$. Its definition will be in terms of our standard $A$-basis of $\Lambda$, and it will depend on this choice. For $(\lambda_1, \lambda_2, \lambda_3, \lambda_4)\in A^4$ we define
\begin{equation}\label{DefOfHeight}
\|  \lambda_1\cdot 1 +  \lambda_2\cdot i +  \lambda_3\cdot j +  \lambda_4\cdot  \frac{\eps i+ i j}{\alpha} \|:= \max_{ i \in\{1,2,3,4\}} \ \deg(\lambda_i). 
\end{equation}
We also define, as an abbreviation, $v_\infty$ applied to a matrix or a vector of elements in $K_\infty$ to be the minimum of all the $v_\infty$-valuations of all entries.
  
\begin{Thm}\label{PropHomCompCharOdd}
Suppose $v, v^{\prime}\in\Ver(\CT)$ have distance $n$ from $v_0 = [L(0, 0)]$.

\begin{enumerate}
\item There is an algorithm that computes $\Hom_\Gamma(v, v')$ in time $\MCO(n^4)$ field operations over $\BFq$.
\item All $\gamma \in \Hom_\Gamma(v, v')$ satisfy $\| \gamma \| \leq n + \deg(\alpha)/2$.
\end{enumerate}
\end{Thm}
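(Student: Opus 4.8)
I would prove the height bound (2) first — via a careful choice of lattice representatives — and then read off (1) as a single round of $\BF_q$-linear algebra. For a vertex $w$ at distance $n$ from $v_0=[L(0,0)]$, let $L_w\subseteq\CO_\infty^2$ be the \emph{maximal} representative of $w$ (equivalently: the unique representative contained in $\CO_\infty^2$ but not in $\pi\CO_\infty^2$), and fix $B_w\in M_2(\CO_\infty)$ with $L_w=B_w\CO_\infty^2$. Then $v_\infty(B_w)=0$ (as $L_w\not\subseteq\pi\CO_\infty^2$) and $v_\infty(\det B_w)=n$ (as $[\CO_\infty^2:L_w]=q^n$, a standard consequence of the distance formula, cf.\ Remark~\ref{RemGeodesivToStandVertex}), hence $v_\infty(B_w^{-1})=v_\infty(\mathrm{adj}\,B_w)-v_\infty(\det B_w)=0-n=-n$, the adjugate of a $2\times2$ matrix having the same entries as $B_w$ up to sign. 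Such a $B_w$ is obtained from the vertex normal form of $w$ by clearing a power of $\pi$, at cost $\MCO(n)$.

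The crucial reformulation I would then establish is
\[ \Hom_\Gamma(v,v')=\bigl\{\gamma\in\Lambda\setminus\{0\}\ :\ B_{v'}^{-1}\,\iota(\gamma)\,B_v\in M_2(\CO_\infty)\bigr\}. \]
For "$\supseteq$": if $\gamma\neq0$ then $\iota(\gamma)$ is invertible ($D$ is a division algebra) and membership gives $\iota(\gamma)L_v\subseteq L_{v'}$ together with $v_\infty(\nrd\gamma)=v_\infty(\det\iota(\gamma))\ge v_\infty(\det B_{v'})-v_\infty(\det B_v)=0$; since $\nrd\gamma\in\Aa$ by Lemma~\ref{EmbeddingAbstract}(a) and is nonzero, it lies in $\BF_q^*$, so $\gamma\in\Gamma$ by Lemma~\ref{EmbeddingAbstract}(b), and $v_\infty(\det\iota(\gamma))=0$ then makes $\iota(\gamma)L_v$ and $L_{v'}$ equal in covolume, hence equal, i.e.\ $\gamma v=v'$. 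The inclusion "$\subseteq$" is the same covolume computation run backwards, using $\det\iota(\gamma)\in\BF_q^*$ and $v_\infty(\det B_v)=v_\infty(\det B_{v'})$. Note the right-hand side, with $0$ adjoined, is an $\BF_q$-subspace of $\Lambda$, of dimension $\le 2$ by Proposition~\ref{PropStabilisors}.

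For (2): given $\gamma\in\Hom_\Gamma(v,v')$, set $U:=B_{v'}^{-1}\iota(\gamma)B_v\in\GL_2(\CO_\infty)$, so that $\iota(\gamma)=B_{v'}UB_v^{-1}$ and
\[ v_\infty(\iota(\gamma))\ \ge\ v_\infty(B_{v'})+v_\infty(U)+v_\infty(B_v^{-1})\ \ge\ 0+0+(-n)\ =\ -n. \]
Writing $\gamma=\lambda_1+\lambda_2 i+\lambda_3 j+\lambda_4\tfrac{\eps i+ij}{\alpha}$, Lemma~\ref{LemEmbedding} lets me solve for the $\lambda_k$ in terms of the entries $\iota(\gamma)_{k\ell}$ (this is where $q$ odd is used): $\lambda_1=\tfrac12(\iota(\gamma)_{11}+\iota(\gamma)_{22})$, $\lambda_3=\tfrac12(\iota(\gamma)_{12}+\tfrac1r\iota(\gamma)_{21})$, $\lambda_4=\tfrac{\sqrt\alpha}{2}(\iota(\gamma)_{12}-\tfrac1r\iota(\gamma)_{21})$, and $\lambda_2$ in terms of $\iota(\gamma)_{11}-\iota(\gamma)_{22}$ and $\lambda_4$. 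Using $v_\infty(\iota(\gamma))\ge-n$, $v_\infty(\sqrt\alpha)=-\deg(\alpha)/2$, $v_\infty(1/r)=\deg(r)\ge0$, and $0\le\deg(\eps)<\deg(\alpha)$, a valuation count yields $\deg\lambda_1,\deg\lambda_3\le n$, $\deg\lambda_4\le n+\deg(\alpha)/2$, and $\deg\lambda_2\le n+\deg(\eps)-\deg(\alpha)/2<n+\deg(\alpha)/2$, whence $\|\gamma\|\le n+\deg(\alpha)/2$.

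For (1): by the reformulation and (2), $\Hom_\Gamma(v,v')$ is exactly the set of nonzero $\gamma=\sum_{k=1}^4\lambda_k f_k$ (with $(f_k)=(1,i,j,\tfrac{\eps i+ij}{\alpha})$ the $\Aa$-basis of $\Lambda$) for which $\lambda_k\in\BF_q[T]$ has degree $\le B:=n+\deg(\alpha)/2$ and $\sum_k\lambda_k N_k\in M_2(\CO_\infty)$, where $N_k:=B_{v'}^{-1}\iota(f_k)B_v$. So the algorithm computes $\sqrt\alpha$ to $\MCO(n)$ digits (cost $\MCO(n^3)$ by the first Lemma of this section), forms $B_v,B_{v'}$ and their inverses and the four matrices $N_k$ to precision $\MCO(n)$ by truncated power-series arithmetic, expands each $\lambda_k$ in $\BF_q$-coordinates, and reads off from the negative $\pi$-parts of the four entries of $\sum_k\lambda_kN_k$ a homogeneous $\BF_q$-linear system of $\MCO(n)$ equations in the $4(B+1)=\MCO(n)$ unknowns; its solution space is $\Hom_\Gamma(v,v')\cup\{0\}$. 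Gaussian elimination over $\BF_q$ on an $\MCO(n)\times\MCO(n)$ system costs $\MCO(n^3)$, and a direct accounting of all the steps at the required precision yields the stated bound $\MCO(n^4)$. I expect the main obstacle to be the reformulation: it must convert the genuinely nonlinear condition $\gamma v=v'$ together with the positivity condition $\nrd(\gamma)\in\BF_q^*$ into the single linear membership $B_{v'}^{-1}\iota(\gamma)B_v\in M_2(\CO_\infty)$, and making this work while simultaneously getting the sharp constant $n+\deg(\alpha)/2$ (rather than $\approx2n$) hinges on using the optimal representatives $L_w\subseteq\CO_\infty^2$ with cyclic quotient — so that $v_\infty(B_w)=0$ and $v_\infty(B_w^{-1})=-n$ exactly — and on bookkeeping covolumes carefully.
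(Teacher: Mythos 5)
Your proposal is correct and takes essentially the paper's route: your reformulation $\Hom_\Gamma(v,v')=\{\gamma\in\Lambda\setminus\{0\}\,:\,B_{v'}^{-1}\iota(\gamma)B_v\in M_2(\CO_\infty)\}$ is the paper's equation (\ref{HomEqn}) with maximal integral lattice representatives used in place of vertex normal forms rescaled by $\pi^{(l-l')/2}$, and your height bound comes from the same two valuation estimates (the $-n$ from the tree data and the $-\deg(\alpha)/2$ from inverting the basis-change matrix, the paper's $C$). Your final reduction to a homogeneous $\BF_q$-linear system of size $\MCO(n)$ solved by Gaussian elimination is exactly the paper's concluding step, so both $\|\gamma\|\le n+\deg(\alpha)/2$ and the $\MCO(n^4)$ running time (your accounting in fact gives $\MCO(n^3)$ for the elimination itself) follow as in the paper.
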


\begin{proof}
If $v=[L(l,g)]$ has distance $n$ from $v_0$, then either
\begin{eqnarray*}
l = n &\text{ and }& \deg_l(g) \text{ lies in }\{0, \dots, n\} \text{ or}\\
l\in\{-n,-n+2,-n+4,\ldots,n-2\} &\text{ and }& \deg_l(g) = \frac{n + l}{2},
\end{eqnarray*}
see Figure~\ref{fig:BTTreeVertMatr} and Remark~\ref{RemGeodesivToStandVertex}. Moreover the path from $[L(0, 0)]$ to $[L(l, g)]$ is via $L(\frac{l - n}{2}, 0)$ if $l < n$ and via $L(n - \deg_l(g), 0)$ if $l = n$.  Set $n_1 := \deg_l(g)$ and $n_2 := n - n_1$ if $l=n$ and $n_2=n_1-n$ if $l<n$. In Figure~\ref{fig:BTTreeVertMatr}, the integers $n_1$ and $n_2\in\BZ$ are the coordinates of $v$ from the baseline toward it and along the baseline, respectively. Moreover $l=n_1+n_2$ and $g\in\pi^{l-n_1}\CO_\infty=\pi^{n_2}\CO_\infty$. Similarly we define $n_1'$ and $n_2'$ for $v'=[L(l',g')]$ which is also of distance $n$ from $v_0=[L(0,0)]$.

Let now $\gamma = \begin{pmatrix} \pi^{l} & g \\ 0 & 1 \end{pmatrix}$ and 
$\gamma^\prime = \begin{pmatrix} \pi^{l'} & g^\prime \\ 0 & 1 \end{pmatrix}$
be the matrices in vertex normal form representing $v$ and $v^{\prime}$ respectively. By definition of $\Hom_\Gamma$ we have
$$\Hom_{\Gamma}(v, v') = \gamma' \GL_2(\CO_\infty) K_\infty^* \gamma^{-1} \cap \Gamma.$$
Because $v_\infty(\det(\gamma)) = l, v_\infty(\det((\gamma^\prime)^{-1}) = l^\prime $, 
$v_\infty(\det(\sigma)) = 0$ for all $\sigma \in \GL_2(\CO_\infty)$ and $v(\det\Gamma) = \lbrace 0 \rbrace$, we see that
$$\Hom_{\Gamma}(v, v^\prime) = \pi^{(l - l^\prime)/2} \gamma' \GL_2(\CO_\infty) \gamma^{-1} \cap \Gamma,$$
where we simply write $\pi^{(l - l^\prime)/2}$ for the scalar matrix $\pi^{(l - l^\prime)/2} \cdot 1_2$. By taking determinants on both sides and using the fact that $\CO_\infty \cap{\Aa} = \BFq$, we finally obtain
\begin{equation}\label{HomEqn}
\Hom_{\Gamma}(v, v^\prime) \ \stackrel{{\scriptscriptstyle \bullet}}{\cup} \ \lbrace 0 \rbrace \ = \ \pi^{(l - l^\prime)/2} \gamma' M_2(\CO_\infty) \gamma^{-1} \ \cap \  \Lambda .
\end{equation}

Equation (\ref{HomEqn}) can be interpreted in the following way: The intersection in the previous line is up to change by conjugation the same as $M_2(\CO_\infty)  \cap\pi^{(l' - l)/2}  \gamma^{\prime\,-1} \Lambda  \gamma$. Here $M_2(\CO_\infty)$ is the unit ball in $M_2(K_\infty)$, a $K_\infty$-vector space of dimension $4$ and $\pi^{(l' - l)/2} \gamma^{\prime\,-1} \Lambda  \gamma$ is a discrete $A$-lattice (of rank $4$) in this vector space. I.e., we need to compute the shortest non-zero vectors of the lattice $\pi^{(l' - l)/2} \gamma^{\prime\,-1} \Lambda  \gamma$ with respect to the norm given by $M_2(\CO_\infty)$. If these vectors have norm at most one, they form $\Hom_{\Gamma}(v, v^\prime) $. If their norm is larger than one, then $\Hom_{\Gamma}(v, v^\prime) $ is empty. In particular, the problem can in principle be solved by the function field version of the LLL algorithm.

However, the implemented versions of the LLL algorithm \cite{Hess,Paulus} need an a priori knowledge of the precision by which $\alpha$ has to be computed as an element in $\BFq((\pi))$.  This in turn makes it necessary to find a bound on the height of the elements in $\Hom_{\Gamma}(v, v^\prime) $, if described as a linear combination in terms of our standard $A$-basis for $\Lambda$. 
Moreover, \cite{Hess,Paulus} do not give a complexity analysis for their algorithms.
To derive these quantities, i.e.\ precision, height and complexity, we proceed as follows. Set
$$C = \begin{pmatrix} 1 & \sqrt{\alpha} & 0 & \frac{\epsilon}{\sqrt{\alpha}} \\
			   0 & 0 & 1 & \frac{1}{\sqrt{\alpha}} \\
			   0 & 0 & r & \frac{-r}{ \sqrt{\alpha}} \\
			   1 & -\sqrt{\alpha} & 0 & \frac{-\epsilon}{\sqrt{\alpha}} 
           \end{pmatrix} \!\textrm{\ and\ } 
           B = \begin{pmatrix} \!\!\pi^{\frac{l'-l}2} &\!\! 0 &\!\! g' \pi^{\frac{-l'-l}2} &\!\! 0 \\
 \!\! -g \pi^{\frac{l'-l}2} &\!\! \pi^{\frac{l'+l}2} &\!\!  -g g' \pi^{\frac{-l'-l}2} &\!\! g' \pi^{\frac{l-l'}2} \\
			 \!\! 0 &\!\! 0 &\!\! \pi^{\frac{-l'-l}2} &\!\! 0 \\
			 \!\! 0 &\!\! 0 &\!\! -g \pi^{\frac{-l'-l}2} &\!\! \pi^{\frac{l-l'}2} 
          \end{pmatrix}.$$
Observe that $v_\infty(g\pi^{-\frac{l}2})\ge n_2-\frac{n_1+n_2}2=\frac{n_2-n_1}2 \ge -\frac{n}2$ and that $-|l|\ge -n$. This implies that $v_\infty(B)\ge -n$. Similarly, using $\deg(\eps)\le\deg(\alpha)$ and computing $C^{-1}$ explicitly, one finds $v_\infty(C^{-1})\ge -m$ where we abbreviate $m:=\frac{\deg(\alpha)}2\in\BZ_{\ge1}$.

We now flatten $2\times2$-matrices to column vectors of length $4$. Taking the explicit form of the $A$-basis of $\Lambda$ from Lemma~\ref{LemEmbedding} into account, as well as the explicit forms of $\gamma$ and $\gamma'$, the solutions to (\ref{HomEqn}) are the solution of the linear system of equations
\begin{equation} \label{HomEqn2}
C\Ulambda=B\Ux,
\end{equation} 
where $\Ulambda$ denotes a (column) vector in $A^4$ and $\Ux$ a (column) vector in $\CO_\infty^4$. The equivalent form $\Ulambda=C^{-1}B\Ux$ and the above estimates on the valuations of $C^{-1}$ and $B$ now immediately imply $v_\infty(\Ulambda)\ge-(n+m)$. In other words, the components of $\Ulambda$ are polynomials and $\|\Ulambda\|\le n+m$. This proves (b).

Next, consider (\ref{HomEqn2}) in the form $B^{-1}C\Ulambda=\Ux$. Again by explicit computation, we have $v_\infty(B^{-1})\ge-n$ and $v_\infty(C)\ge -\max\{\deg(r),m\}=:-d$. Writing $B^{-1}C=\sum_{k=-d}^\infty X_k\pi^k$ as a power series with $X_k\in M_4(\BFq)$ and using the bound from (b), equation (\ref{HomEqn2}) is equivalent to 
$$ {\textstyle\big(\sum_{k=-(n+m)}^{n+d} X_k\pi^{-k} \big)\Ulambda\equiv 0 \pmod {\CO_\infty^4}}.$$
We also expand $\Ulambda=\sum_{k=0}^{n+m}\Ulambda_k\pi^{-k}$ as a polynomial in $\pi^{-1}$ with $\Ulambda_k\in \BFq^4$ and let $X_k$ and $\Ulambda_k$ be zero outside the range of indices $k$ indicated above. Then (\ref{HomEqn2}) becomes equivalent to the system of linear equations
$$ {\textstyle\big(\sum_{k=0}^{n+m} X_{h-k}\Ulambda_k \big) =0,\quad h=0,\ldots,2n+d+m}$$
in the indeterminates $\Ulambda_k$ and with coefficients in $\BF_q$. (Each equation has $4$ linear components.) 
On the one hand, this shows that we need to compute $\alpha$ to accuracy $n'=2n+d+m+1$.  On the other hand, we see that using Gauss elimination one can solve for the unknowns in $\CO(n^{\prime\,2})$ steps where each step consists of $(4n')^2$ additions and $(4n')^2$ multiplications in the field $\BFq$. Regarding $\deg(r)$ as a structural constant and applying Proposition~\ref{PropChebtor}, the complexity is thus $\CO(n^4)$. 
\end{proof}

\begin{Rem}
We have chosen $v_0$ as a reference vertex in Theorem~\ref{PropHomCompCharOdd} for simplicity. Since $\GL_2(K_\infty)$ acts transitively on $\CT$, one could work with any reference vertex. Also, if one chooses $v_0$ as the mid point of the geodesic from $v$ to $v'$, one sees that the complexity of an algorithm to compute $\Hom_\Gamma(v,v')$ is $\CO(d^4)$ where $d=d(v,v')$. Note also that only vertices that are an even distance apart can have non-trivial $\Hom_\Gamma(v,v')$, because $d(v,\gamma v)$  is even for all $\gamma\in\Gamma$ and $v\in\CT$.
\end{Rem}

\begin{Rem}
Our implementation of algorithm of Theorem~\ref{PropHomCompCharOdd} uses the Gauss algorithm and not LLL. The linear system that needs to be solved has $4n'$ equations in $4n+2\deg(\alpha)$ variables with $n'$ as in the above proof. In practice, $\deg(\alpha)\le\deg( r)$, compare Proposition~\ref{PropChebtor}. As we shall see in Proposition~\ref{PropDiameter}, see also Remark~\ref{RemOnOptimalDiam}, we have $n\le 2\deg(r)-2$ and typically $\le 2\deg(r)-4$. Therefore we have about $4n'\le 22\deg(r)$ equations in about $10\deg(r)$ variables. Since the number of vertices of the quotient graph is essentially $q^{\deg(r)-3}$ (and $q\ge3$), already $\deg(r)=10$ is a large value. Over finite fields, systems of the size just described can be solved rather rapidly.
\end{Rem}

\begin{Rem}\label{RemOnHomGvw}
To adapt the algorithm of Theorem~\ref{PropHomCompCharOdd} to the generality proposed in Remark~\ref{RemOnGenOfAlgQuotGraph} at this point requires substantial new code for function fields. Using the notation from there, the rings $\Aa$ tend not to be UFD's and thus have a more sophisticated arithmetic. Moreover we do not expect that one should be able to give explicit simple formulas that describe the quaternion algebra $D$ and even less so a maximal order $\Lambda$ in it. One could work with non-maximal orders $\widetilde\Lambda$ but the quotient graph $\widetilde\Lambda^*\backslash\CT$ has typically much larger size than $\Lambda^*\backslash\CT$. If one has a reasonably simple description of $\Lambda$ then an algorithm as in Theorem~\ref{PropHomCompCharOdd} should be doable (certainly in the case where $S$ consists of one place only). Also, as far as we are aware of, an LLL algorithm in this generality is not implemented. Because of all these still open problems, it seems reasonable to present the algorithm here for $\BFq[T]$ only.
\end{Rem}

\section{Presentations of $\Gamma$ and the word problem}
\label{Sec-Presentations}

From a fundamental domain for the action of $\Gamma$ on $\MCT$ together with a side pairing one obtains a
 presentation of $\Gamma$ as an abstract group. This has been explained in \cite[Chapter I.4]{Serre1} interpreting $\Gamma$ as the amalgam of the stabilizers
of the vertices of $\Gamma \backslash \MCT$ along the stabilizers of the edges connecting them.
Compare also \cite[Thm.~5.7]{Papikian}.

\begin{Lem}[{{\cite[I.4.1, Lem.~4]{Serre1}}}]\label{LemSerreReprs}
Let $G$ be a group acting on a connected graph $\MCX$ and $\MCY$ a fundamental domain for the action of $G$ on $\MCX$
with an edge pairing $\EP$. Then $G$ is generated by $$\lbrace g_e \in e\in \EP \rbrace \cup 
\lbrace \Stab_G(v) \mid v \in \Ver(\MCS) \rbrace.$$
\end{Lem}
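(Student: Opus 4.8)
The statement is \cite[I.4.1, Lem.~4]{Serre1}, so the cleanest route is to reduce to Serre's setting and quote it, but a direct self-contained argument is also short and I sketch that. The plan is to let $H$ be the subgroup of $G$ generated by the set $\{g_e\mid e\in\EP\}\cup\{\Stab_G(v)\mid v\in\Ver(\MCS)\}$ and to show $H=G$ by an orbit-connectedness argument on the tree $\MCX$. First I would observe that, since $\MCS$ is a tree of representatives, every vertex of $\MCX$ is $G$-equivalent to a vertex of $\MCS$; it therefore suffices to show that for every $g\in G$ and every $v\in\Ver(\MCS)$ there is an $h\in H$ with $hv=gv$, because then $h^{-1}g\in\Stab_G(v)\subseteq H$ and hence $g\in H$.

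The key step is a connectedness/induction argument realizing the ``translates of $\MCS$ tile $\MCX$'' picture. Set $\MCX_H:=\bigcup_{h\in H}h\MCS$, the union of the $H$-translates of $\MCS$ inside $\MCX$; I would show $\MCX_H=\MCX$. Since $\MCX$ is connected it is enough to prove that $\Ver(\MCX_H)$ is closed under adjacency, i.e.\ that if $w\in\Ver(h\MCS)$ for some $h\in H$ and $w'$ is adjacent to $w$ in $\MCX$, then $w'\in\Ver(\MCX_H)$. Translating by $h^{-1}$ (which preserves $H$-translates of $\MCS$), I may assume $w\in\Ver(\MCS)$ and must show $w'\in\Ver(\MCX_H)$. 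Now by Definition~\ref{DefFundamentalDomainWithSidePairing} the projection $\Edg(\MCY)\to\Edg(G\backslash\MCX)$ is a bijection, so the edge $\{w,w'\}$ is $G$-equivalent to a unique edge of $\MCY$ having $w$ (or a $G$-translate of $w$) as an endpoint; since distinct vertices of $\MCS$ are $G$-inequivalent, after composing with a suitable element of $\Stab_G(w)\subseteq H$ I may assume the edge $\{w,w'\}$ itself lies in $\MCY$. If it lies in $\MCS$ then $w'\in\Ver(\MCS)\subseteq\Ver(\MCX_H)$ and we are done. Otherwise it lies in $\Edg(\MCY)\setminus\Edg(\MCS)$ with $o(e)=w\in\MCS$, so $e\in\EP$ and $g_e t(e)=g_ew'\in\Ver(\MCS)$; thus $w'=g_e^{-1}(g_ew')\in g_e^{-1}\MCS\subseteq\MCX_H$. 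This proves $\MCX_H=\MCX$.

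Finally I would assemble the conclusion: given $g\in G$ and $v\in\Ver(\MCS)$, the vertex $gv$ lies in some $h\MCS$ with $h\in H$, so $h^{-1}gv\in\Ver(\MCS)$; since $v\in\Ver(\MCS)$ too and no two distinct vertices of $\MCS$ are $G$-equivalent, we get $h^{-1}gv=v$, i.e.\ $h^{-1}g\in\Stab_G(v)\subseteq H$, whence $g\in H$. Therefore $H=G$, as claimed.

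The main obstacle is purely bookkeeping: making precise that ``reducing an edge to $\MCY$ modulo $G$ can be achieved using only $\Stab_G(w)$ for the endpoint $w$ already in $\MCS$,'' which relies on the combination of (b) the bijectivity of $\Edg(\MCY)\to\Edg(G\backslash\MCX)$, (c) every edge of $\MCY$ meeting $\MCS$, and the fact (part of $\MCS$ being a tree of representatives) that distinct vertices of $\MCS$ lie in distinct $G$-orbits. No delicate estimates are needed; once the edge-reduction step is stated carefully the induction on the distance from a fixed base vertex of $\MCS$ closes immediately.
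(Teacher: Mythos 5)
Your overall strategy is sound and is essentially the standard Serre-style argument (the paper itself gives no proof of this lemma; it only cites \cite[I.4.1, Lem.~4]{Serre1}): set $H=\langle\{g_e\mid e\in\EP\}\cup\bigcup_{v\in\Ver(\MCS)}\Stab_G(v)\rangle$, show $\MCX_H:=\bigcup_{h\in H}h\MCS$ is closed under adjacency and hence, by connectedness, all of $\MCX$, then finish with the $G$-inequivalence of distinct vertices of $\MCS$. However, the step you yourself flag as ``bookkeeping'' is stated incorrectly and fails as written. You claim that, because distinct vertices of $\MCS$ are $G$-inequivalent, the edge $\{w,w'\}$ (with $w\in\Ver(\MCS)$) can be moved into $\MCY$ by an element of $\Stab_G(w)$, and you then only treat the alternative ``$o(e)=w\in\MCS$''. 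But the unique edge of $\MCY$ in the $G$-orbit of $(w,w')$ may have its $\MCS$-endpoint lying over $w'$ rather than over $w$: the loose vertices $t(e)$, $e\in\EP$, belong to $\MCY\setminus\MCS$ and are $G$-equivalent to vertices of $\MCS$, so the $\MCY$-representative can be $e^\star=(t(e),o(e))$ with $\gamma w=t(e)\notin\Ver(\MCS)$ and $\gamma w'=o(e)\in\Ver(\MCS)$, in which case no element of $\Stab_G(w)$ does the job. A concrete instance: $\MCX$ the tree with vertex set $\BZ$ and edges $\{n,n+1\}$, $G=2\BZ$ acting by translation, $\MCS$ the edge $\{0,1\}$, $\MCY=\MCS$ together with the edge $\{1,2\}$, $\EP=\{(1,2)\}$ with $g_{(1,2)}$ the translation by $-2$; for $w=0$, $w'=-1$ the stabilizer $\Stab_G(0)$ is trivial and $\{0,-1\}$ does not lie in $\MCY$.

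The repair is short and uses exactly the same ingredients, so this is an omitted case rather than a wrong approach. In the missed case there are $e\in\EP$ and $\gamma\in G$ with $\gamma w=t(e)$ and $\gamma w'=o(e)$; then $g_e\gamma w=g_e t(e)\in\Ver(\MCS)$, and since $w\in\Ver(\MCS)$ and distinct vertices of $\MCS$ are $G$-inequivalent, $g_e\gamma w=w$, i.e.\ $g_e\gamma\in\Stab_G(w)\subseteq H$, hence $\gamma\in H$ and $w'=\gamma^{-1}o(e)\in\gamma^{-1}\MCS\subseteq\MCX_H$. With this case added your induction closes and the final assembly ($h^{-1}g\in\Stab_G(v)$) is correct. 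Note also that both your reduction and the claim that every vertex of $\MCX$ is $G$-equivalent to one of $\MCS$ use that $\MCS$ maps isomorphically --- in particular injectively on vertices --- onto a maximal (hence spanning) subtree of $G\backslash\MCX$; this is the reading of Definition~\ref{DefTreeOfReprs} intended via Serre and should be said explicitly, since the quotient-graph image being a maximal subtree alone does not literally state injectivity.
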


The relations among the generators of the previous lemma are given by \cite[\S~I.5, Thm.~13]{Serre1} and based on 
the construction of the fundamental group $\pi(\Gamma, \MCY, \MCS)$ in \cite[p.~42]{Serre1}.
For the group $\Gamma$ considered here, all non-terminal vertices $v$ of $\MCS$ have stabilizer $\BF_q^*$ which lies in the center of $\Gamma$. The results just quoted therefore considerably simplify and yield:

\begin{Prop}\label{PropGenRel}
Let $(\CY,\CS,(g_e)_{e\in\EP})$ be a fundamental domain with an edge pairing for $(\Gamma,\CT)$ as provided by Algorithm~\ref{AlgQuotGraphNew}. For each terminal vertex $v\in\Ver(\CS)$, let $g_v$ be a generator of $\Stab_{\Gamma}(v)$. Then 
$\Gamma$ is isomorphic to the group generated by 
$$ \lbrace g_0 \rbrace \cup \lbrace g_v \mid v \text{ terminal in } \Ver(\CS) \rbrace 
\cup \lbrace g_e \text { the edge-label} \mid e \in \EP \rbrace$$
subject to the relations
$$g_0^{q - 1} = 1, \text{ } g_v^{q + 1} = g_0 \text{ for all terminal } v, \text{ }
 [g_e, g_0] = 1 \text{ for all } e\in \EP.$$
\end{Prop}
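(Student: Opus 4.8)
The plan is to read off the presentation from the Bass--Serre structure theorem for groups acting on trees, in the refined form of \cite[\S~I.5.4, Thm.~13]{Serre1}, applied to the action of $\Gamma$ on $\CT$ together with the enhanced fundamental domain $(\CS,\CY)$ with edge pairing $\EP$ produced by Algorithm~\ref{AlgQuotGraphNew}. First I would recall that by that theorem $\Gamma$ is the fundamental group $\pi(\Gamma,\CY,\CS)$ of the associated graph of groups on $\Gamma\backslash\CT$, and write out its standard presentation: the generators are the vertex groups $\Gamma_v:=\Stab_\Gamma(v)$ for $v\in\Ver(\CS)$ together with one element $g_e$ for each $e\in\EP$ (the edges of $\CY$ leaving $\CS$), and the relations are (i) the internal relations of each $\Gamma_v$; (ii) for each edge $e$ of $\CS$, the identification of the edge group $\Gamma_e$ inside $\Gamma_{o(e)}$ with its image inside $\Gamma_{t(e)}$; and (iii) for each $e\in\EP$, the relation $g_e\,a\,g_e^{-1}=a'$, where $a\mapsto a'$ is the isomorphism from the image of $\Gamma_e$ in $\Gamma_{o(e)}$ to the image of $\Gamma_e$ in $\Gamma_{g_e t(e)}=g_e\Gamma_{t(e)}g_e^{-1}$. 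The generator part of this is exactly Lemma~\ref{LemSerreReprs}; the real work is to simplify the relations using the special shape of the stabilizers recorded in Section~\ref{Sec-QuatGraph}.

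Next I would feed in the structural input. By Proposition~\ref{PropStabilisors} every edge stabilizer and every stabilizer of a projectively stable vertex equals the group $\BF_q^*$ of scalar matrices in $\Gamma$, which is central; fix a generator $g_0$ of it. By Theorem~\ref{ThmQuotStr}(1),(3) the graph $\Gamma\backslash\CT$ has no loops and its terminal vertices are precisely the images of the projectively unstable vertices, whose stabilizers $\Gamma_v\cong\BF_{q^2}^*$ are cyclic of order $q^2-1$ and contain $\langle g_0\rangle$ as their unique subgroup of index $q+1$; hence one may choose the generator $g_v$ of $\Gamma_v$ so that $g_v^{q+1}=g_0$ (in a cyclic group of order $(q-1)(q+1)$ with fixed generator of the order-$(q-1)$ subgroup, some generator of the whole group is a $(q+1)$-st root of it -- an elementary congruence check). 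Since $\overline{\CS}$ is a spanning tree of $\Gamma\backslash\CT$, the unique edge at a terminal vertex lies in $\CS$; therefore no edge of $\EP$ is incident to a projectively unstable vertex, so for every $e\in\EP$ both endpoints of $e$ and of $g_e t(e)$ are projectively stable, the images of $\Gamma_e$ in $\Gamma_{o(e)}$ and in $\Gamma_{g_e t(e)}$ are both equal to $\langle g_0\rangle$ with the identity identification, and relation (iii) collapses to $g_e g_0 g_e^{-1}=g_0$, i.e.\ $[g_e,g_0]=1$.

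Then I would dispose of the tree relations (ii): along an edge of $\CS$ joining two projectively stable vertices the identification is the tautology $g_0=g_0$, while along the edge joining a terminal vertex $v$ to its neighbour it identifies $\langle g_0\rangle\subset\Gamma_{o(e)}$ with the order-$(q-1)$ subgroup $\langle g_v^{q+1}\rangle$ of $\Gamma_v$, which with the above choice of $g_v$ is precisely $g_v^{q+1}=g_0$. Connectedness of $\CS$ then shows that all the copies of $\BF_q^*$ at the projectively stable vertices are identified to a single group $\langle g_0\mid g_0^{q-1}=1\rangle$, and each terminal vertex contributes its cyclic group $\Gamma_v$ amalgamated along this central subgroup; the internal relation $g_v^{q^2-1}=1$ is then redundant since $g_v^{q^2-1}=(g_v^{q+1})^{q-1}=g_0^{q-1}=1$. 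Collecting the surviving generators and relations yields exactly the presentation claimed, and Serre's theorem guarantees there is no further collapsing; conversely all the listed relations hold in $\Gamma$ (the first two by the choices of $g_0,g_v$, the third because $g_0$ is central), so the natural surjection from the presented group onto $\Gamma$ is an isomorphism.

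The one genuinely delicate point is bookkeeping: fixing the orientation conventions in the edge-pairing relation (iii) so that it is conjugation by $g_e$ (not $g_e^{-1}$), and making the compatible choice of $g_v$ with $g_v^{q+1}=g_0$. Everything else is a mechanical specialization of \cite[\S~I.5.4, Thm.~13]{Serre1} to the situation where all edge groups and all but finitely many vertex groups coincide with the central subgroup $\BF_q^*$; the substantive content is imported from Proposition~\ref{PropStabilisors} and Theorem~\ref{ThmQuotStr}, so the proof itself is short.
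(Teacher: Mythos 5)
Your proposal is correct and follows exactly the route the paper takes: the paper offers no detailed argument beyond citing \cite[\S~I.4, \S~I.5, Thm.~13]{Serre1} and remarking that the central stabilizers $\BF_q^*$ at non-terminal vertices make the relations collapse, and your write-up is precisely that specialization spelled out (including the compatible choice of $g_v$ with $g_v^{q+1}=g_0$ and the observation that edges in $\EP$ only meet projectively stable vertices). No gaps; if anything you supply details the paper leaves implicit.
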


In particular $g_0$ lies in the center of $\Gamma$, as it should.

\begin{Ex}
In Example~\ref{ExQuotGraph} the group $\Gamma$ is generated by
$$\lbrace g_0, g_{v_1}, \dots, g_{v_8}, g_1, \dots, g_5 \rbrace$$
with relations
$$g_0^4 = 1, g_{v_i}^{6} = g_0, [g_0, g_i] = 1.$$
\end{Ex}

The word problem with respect to this set of generators was already solved
by the reduction Algorithm~\ref{AlgReduction}, compare \cite[Remark 4.6]{Voight}.

\section{Complexity analysis and degree bounds}
\label{Sec-Complex}

In this section we will analyze the complexity of Algorithm~\ref{AlgQuotGraphNew} and obtain
some bounds on the size of generators of $\Gamma$. We start by bounding the diameter of the graph 
$\Gamma \backslash \MCT$. The idea of using the Ramanujan property to obtain complexity bounds
was inspired by \cite[Conj.~6.6]{Kirschmer-Voight}. A standard reference is \cite{Lubotzky}.

\begin{Def}
 A $k$-regular connected graph $\MCG$ is called a {\em Ramanujan graph} if for every eigenvalue $\lambda$ of the
adjacency matrix of $\CG$ either $\lambda = \pm k$ or $| \lambda | \leq 2 \sqrt{k - 1}$.
\end{Def}

\begin{Prop}[{{\cite[Prop~7.3.11]{Lubotzky}}}]\label{ThmRamanujan}
 Let $\MCG$ be a $k$-regular Ramanujan graph on $n\ge3$ vertices.\footnote{The proof in \cite{Lubotzky} requires at least one eigenvalue $\lambda$ of the adjacency matrix with $| \lambda | \leq 2 \sqrt{k - 1}$ and hence $n\ge3$. Also, the assertion is obviously wrong for $n=2$ and $k$ large.}  Then 
$$\diam(\MCG) \leq \log_{k - 1}(4n^2).$$
\end{Prop}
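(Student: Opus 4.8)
The plan is to use the spectral expansion of the adjacency operator $A$ of the $k$-regular Ramanujan graph $\MCG$. Let $1 = \lambda_0 > \lambda_1 \ge \cdots \ge \lambda_{n-1} \ge -1$ be the eigenvalues of the normalized adjacency operator $\frac{1}{k}A$, with an orthonormal eigenbasis $f_0, f_1, \ldots, f_{n-1}$ of $\ell^2(\Ver(\MCG))$; here $f_0 = n^{-1/2} \cdot \mathbf{1}$ is the constant function. The Ramanujan hypothesis says that every nontrivial eigenvalue of $A$ has absolute value at most $2\sqrt{k-1}$, so $\mu := \max_{i \ge 1} |\lambda_i| \le \frac{2\sqrt{k-1}}{k}$. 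The standard trick is: for two vertices $v, w$, the number of walks of length $t$ from $v$ to $w$ equals $\langle A^t \delta_v, \delta_w \rangle$, and expanding $\delta_v, \delta_w$ in the eigenbasis gives
$$\langle A^t \delta_v, \delta_w \rangle = \frac{k^t}{n} + \sum_{i=1}^{n-1} \lambda_i^t k^t \langle \delta_v, f_i\rangle \langle f_i, \delta_w\rangle.$$
If this quantity is strictly positive for some $t$, then $d(v,w) \le t$.

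The key steps, in order: first I would bound the error term by Cauchy--Schwarz,
$$\Big| \sum_{i=1}^{n-1} \lambda_i^t \langle \delta_v, f_i\rangle \langle f_i, \delta_w\rangle \Big| \le \mu^t \Big(\sum_{i\ge1} \langle \delta_v,f_i\rangle^2\Big)^{1/2}\Big(\sum_{i\ge1}\langle \delta_w,f_i\rangle^2\Big)^{1/2} \le \mu^t,$$
using Bessel's inequality $\sum_{i} \langle \delta_v, f_i\rangle^2 = \|\delta_v\|^2 = 1$. Hence $\langle A^t\delta_v,\delta_w\rangle \ge k^t\big(\tfrac1n - \mu^t\big)$, which is positive as soon as $\mu^t < 1/n$. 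Next I would solve this inequality for $t$: since $\mu \le \frac{2\sqrt{k-1}}{k} \le \frac{1}{\sqrt{k-1}}$ (the last inequality because $4(k-1) \le k^2$), it suffices to require $(k-1)^{-t/2} < 1/n$, i.e.\ $t > 2\log_{k-1} n$. Therefore $d(v,w) \le \lceil 2\log_{k-1} n\rceil$; taking $t = \lceil 2\log_{k-1}n + 1\rceil$ or simply noting $2\log_{k-1}n + 2 < \log_{k-1}(n^2) + \log_{k-1}((k-1)^2) \le \log_{k-1}(4n^2)$ when $k \ge 3$ (and handling small $k$ separately) yields $\diam(\MCG) \le \log_{k-1}(4n^2)$, as claimed. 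One must keep track of the ceiling and of which of $\pm k$ actually occurs as an eigenvalue --- if $\MCG$ is bipartite then $-k$ is an eigenvalue and $\langle A^t\delta_v,\delta_w\rangle$ vanishes for $t$ of the wrong parity, so the clean bound should be stated using $|\langle A^t\delta_v,\delta_w\rangle + \langle A^{t+1}\delta_v,\delta_w\rangle|$ or one simply observes the distance bound still follows by choosing $t$ of the correct parity, which only costs an extra $+1$ absorbed into the constant.

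The main obstacle I anticipate is purely bookkeeping: getting the constant $4$ exactly right (rather than some larger absolute constant) requires being slightly careful about the ceiling function, the parity issue in the bipartite case, and the precise inequality $2\sqrt{k-1}/k \le 1/\sqrt{k-1}$. The hypothesis $n \ge 3$ enters exactly because one needs at least one eigenvalue strictly inside $(-k, k)$ for $\mu < 1$ to hold — with $n = 2$ a $k$-regular graph (a double edge, or $k$ parallel edges) has only eigenvalues $\pm k$ and the argument collapses, which is why the footnote flags it. None of this is deep; the content is the spectral-gap estimate, and the rest is elementary calculus with logarithms. Since this is quoted verbatim as \cite[Prop.~7.3.11]{Lubotzky}, in the paper itself one would of course just cite it, but the above is the argument one would reconstruct.
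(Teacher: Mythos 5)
Your overall strategy---expand $\delta_v,\delta_w$ in an eigenbasis of $A$, bound the nontrivial part by Cauchy--Schwarz and Bessel, and find the smallest $t$ with $\langle A^t\delta_v,\delta_w\rangle>0$---is the classical Chung-type argument, but it cannot reach the stated constant, and the step where you claim it does is false. Concretely, the inequality $\frac{2\sqrt{k-1}}{k}\le\frac{1}{\sqrt{k-1}}$ is equivalent to $2(k-1)\le k$, i.e.\ to $k\le 2$; your justification $4(k-1)\le k^2$ only gives $\frac{2\sqrt{k-1}}{k}\le 1$. (Likewise, your closing estimate $\log_{k-1}(n^2)+\log_{k-1}((k-1)^2)\le\log_{k-1}(4n^2)$ requires $(k-1)^2\le 4$, so it too fails for $k\ge 4$, which is exactly the range $k=q+1\ge4$ relevant here.) What your argument honestly proves is $\diam(\MCG)\le\frac{\log n}{\log\bigl(k/(2\sqrt{k-1})\bigr)}+O(1)$, and for fixed $k$ this is weaker than $\log_{k-1}(4n^2)=2\log_{k-1}(2n)$ by a multiplicative factor $\log(k-1)/\log\bigl(k^2/(4(k-1))\bigr)$, e.g.\ roughly $3.8$ for $k=4$ and $\approx 6$ for $k=3$. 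This is not a cosmetic loss for the paper: the coefficient $2$ in $\diam\le 2\log_{k-1}(2n)$ is what yields $\diam(\Gamma\backslash\MCT)\le 2\deg(r)+O(1)$ in Proposition~\ref{PropDiameter} and hence the height bound in Corollary~\ref{CorSize} and the complexity bound $\MCO(q^{2\deg(r)-6}\deg(r)^5)$.

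The missing idea is that one must not use the plain powers $A^t$: for an eigenvalue $\lambda$ with $|\lambda|\le 2\sqrt{k-1}$ the power $\lambda^t$ can be as large as $2^t(k-1)^{t/2}$, and that spurious factor $2^t$ is precisely what degrades the constant. The proof behind \cite[Prop.~7.3.11]{Lubotzky} instead applies to $A$ polynomials adapted to the $(k)$-regular tree---the non-backtracking walk polynomials $A_m$ (equivalently, Chebyshev polynomials of the second kind in $x/(2\sqrt{k-1})$, i.e.\ the spherical functions): writing $\lambda=2\sqrt{k-1}\cos\theta$, these satisfy $|P_m(\lambda)|\le\mathrm{poly}(m)\,(k-1)^{m/2}$ on the Ramanujan interval while $P_m(k)\asymp(k-1)^m$, and then the same positivity argument gives $m\approx\log_{k-1}n$ and hence a diameter bound of the form $2\log_{k-1}n+O(1)$ (careful bookkeeping, e.g.\ via squared Chebyshev kernels to control signs and parity, produces the clean form $\log_{k-1}(4n^2)$). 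Your remarks on the bipartite parity issue and on why $n\ge3$ is needed (for $n=2$ the only eigenvalues are $\pm k$) are correct, but they do not repair the quantitative gap. Note finally that the paper itself offers no proof to compare against---it quotes the result from Lubotzky's book---so if you do not want to import the Chebyshev machinery you should simply cite it as the paper does, rather than substitute the $A^t$ argument, which proves a strictly weaker statement.
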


Let $$\one(R) := \begin{cases} 1 & \text{ if some place in $R$ has degree one,}\\ q(q - 1) & \text{ otherwise.} \end{cases}$$

\begin{Lem}\label{LemCovering}
There is a covering of $\MCG := \Gamma \backslash \MCT$  by a $q + 1$-regular 
Ramanujan graph $\tilde \MCG$ with
 $$\# \Ver(\tilde \MCG) = \frac{2 \one(R)}{(q - 1)^2} \prod_{\Fp \in R} (q_\Fp - 1).$$
\end{Lem}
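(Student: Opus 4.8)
The plan is to realize $\tilde\MCG$ as a quotient $\Gamma'\backslash\MCT$ by a normal congruence subgroup $\Gamma'\subset\Gamma$ that acts freely on $\MCT$, and to deduce the Ramanujan property from the theory of automorphic forms on the quaternion algebra. Throughout I work with $\bar\Gamma=\Gamma/\BF_q^*\subset\PGL_2(K_\infty)$; by Proposition~\ref{PropStabilisors} and the remarks following it, its only nontrivial simplex stabilizers are cyclic of order $q+1$ (at the projectively unstable vertices), and by Lemma~\ref{LemDistanceEven} it acts without inversions. I would choose $\Gamma'$ in two cases. If some $\Fp_0\in R$ has degree $1$ (so $\one(R)=1$), then $D_{\Fp_0}$ is the division quaternion algebra over $K_{\Fp_0}$ with residue field $\BF_q$, so $\Lambda_{\Fp_0}/\mathrm{rad}(\Lambda_{\Fp_0})\cong\BF_{q^2}$ and reduction gives a homomorphism $\bar\Gamma\to\BF_{q^2}^*/\BF_q^*\cong\BZ/(q+1)$; I let $\Gamma'$ be the preimage in $\Gamma$ of its kernel. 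If no place of $R$ has degree $1$ (so $\one(R)=q(q-1)$), I pick a finite place $\Fq\notin R$ of degree $1$ — possible since $R$ is finite and contains no degree-one place — and, using $\Lambda_\Fq\cong M_2(\CO_\Fq)$, let $\Gamma'$ be the preimage of the kernel of reduction $\bar\Gamma\to\PGL_2(\BF_q)$.

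In both cases $\Gamma'$ is normal in $\Gamma$ and its image $\bar\Gamma'$ in $\PGL_2(K_\infty)$ is torsion-free: a torsion element of $\bar\Gamma$ fixes a vertex, hence has order dividing $q+1$ and thus prime to $p:=\Char\BF_q$, while a nontrivial element of order prime to $p$ in $\PGL_2(\CO_\Fp)$ (resp.\ in the local unit group) cannot reduce to the identity because the relevant congruence kernel is a pro-$p$-group. Together with the absence of inversions, $\bar\Gamma'$ acts freely on $\MCT$, so $\tilde\MCG:=\Gamma'\backslash\MCT$ is a $(q+1)$-regular connected graph; the natural graph morphism $\tilde\MCG\to\bar\Gamma\backslash\MCT=\MCG$ is surjective and distance-nonincreasing (both graphs are quotients of $\MCT$), which is the covering that is claimed. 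By strong approximation for the simply connected group $D^{(1)}$, the chosen reduction map surjects onto $\PSL_2(\BF_q)$ (resp.\ onto the norm-one subgroup of $\BF_{q^2}^*$ modulo $\BF_q^*$), and by Eichler's norm theorem $\nrd(\Gamma)=\BF_q^*$; combining these, $N:=[\bar\Gamma:\bar\Gamma']$ equals $|\PGL_2(\BF_q)|=(q+1)q(q-1)$ (resp.\ $|\BF_{q^2}^*/\BF_q^*|=q+1$), so $N=(q+1)\,\one(R)$ in both cases.

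Next I would count vertices. The fibre of $\tilde\MCG\to\MCG$ over a vertex $\bar v$ with lift $v$ is $\bar\Gamma'\backslash\bar\Gamma/\bar\Gamma_v$, which has $N/\#\bar\Gamma_v$ elements because $\bar\Gamma'$ is normal and meets each $\bar\Gamma_v$ trivially. By Theorem~\ref{ThmQuotStr}, $\MCG$ has $V_{q+1}$ vertices with $\bar\Gamma_v=1$ (those of degree $q+1$) and $V_1$ vertices with $\#\bar\Gamma_v=q+1$ (the terminal ones), so
$$\#\Ver(\tilde\MCG)=N\Big(V_{q+1}+\frac{V_1}{q+1}\Big).$$
Substituting $V_1=2^{\#R-1}\odd(R)$, $(q-1)V_{q+1}=2g(R)-2+V_1$ and the formula for $g(R)$, the contributions of $\odd(R)$ cancel and the bracket collapses to $\frac{2}{(q-1)^2(q+1)}\prod_{\Fp\in R}(q_\Fp-1)$; multiplying by $N=(q+1)\one(R)$ gives the asserted value.

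It remains to prove that $\tilde\MCG$ is Ramanujan, and this is the one step that needs genuine input. Since $D$ is a division algebra, $D^*$ modulo its centre is $K$-anisotropic, so the space of functions on the vertices of $\tilde\MCG$ is a space of automorphic forms on $D^*$ of a fixed level $U$ at the finite places with $U_\infty=\PGL_2(\CO_\infty)$, decomposing as a finite sum of irreducibles, and the adjacency operator of $\tilde\MCG$ is the Hecke operator at $\infty$ on that space. Its eigenvalues are therefore the (self-adjoint, hence real) Satake parameters at $\infty$ of the occurring representations: the one-dimensional representations contribute exactly $\pm(q+1)$, while every other contributing representation is infinite-dimensional and, by the Jacquet--Langlands correspondence over function fields, matches a cuspidal automorphic representation of $\GL_2$ over $K$ with the same unramified components; by the Ramanujan--Petersson conjecture for $\GL_2$ over function fields (Drinfeld, via the attached $2$-dimensional $\ell$-adic Galois representation and its purity), its parameter at $\infty$ has absolute value $1$, so the corresponding eigenvalue $\lambda$ satisfies $|\lambda|\le 2\sqrt q=2\sqrt{(q+1)-1}$. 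Thus $\tilde\MCG$ is Ramanujan. The main obstacle is exactly this last paragraph — identifying the adjacency operator with the right Hecke operator on the discrete automorphic spectrum and checking that the non-tempered part consists solely of the one-dimensional representations giving $\pm(q+1)$ — whereas the rest is bookkeeping built on Theorem~\ref{ThmQuotStr}. If one prefers, the Ramanujan property of such quaternionic quotient graphs can instead be quoted from the literature.
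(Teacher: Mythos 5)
Your construction and counting are essentially the paper's: the paper also passes to the full congruence subgroup at a degree-one place $\Fp_0$ (taken inside $R$ when $\one(R)=1$, and a split degree-one place otherwise), and its vertex count is exactly your identity $\#\Ver(\tilde\MCG)=\one(R)\bigl((q+1)V_{q+1}+V_1\bigr)$, phrased with the index $\one(R)(q^2-1)$ of $\Gamma\cap\Gamma(\Fp_0)$ in $\Gamma$ rather than your $N=(q+1)\one(R)$ in $\bar\Gamma$; in fact you spell out several points the paper compresses into ``by analyzing the growth of the stabilizers'', namely the freeness of the action (torsion-freeness via the pro-$p$ congruence kernel plus Lemma~\ref{LemDistanceEven}) and the surjectivity of the reduction maps via strong approximation and $\nrd(\Gamma)=\BF_q^*$. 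The genuine divergence is the Ramanujan property: the paper simply cites \cite[Thm.~1.2]{Lubotzky2} for $(\Gamma\cap\Gamma(\Fp_0))\backslash\MCT$, whereas you sketch the underlying automorphic argument (adjacency operator as the Hecke operator at $\infty$, Jacquet--Langlands transfer, Drinfeld's Ramanujan--Petersson, with only the one-dimensional representations giving $\pm(q+1)$). Your route is heavier and, as you acknowledge, the identification of the quotient graph with the relevant adelic double-coset space and the analysis of the non-tempered spectrum would need to be carried out in detail (or quoted, which is precisely what the paper does); on the other hand, if written out it is not tied to the full congruence subgroup $\Gamma(\Fp_0)$, so it would also cover level structures such as $\tilde\Gamma_1(\Fp_0)$ and thereby justify the improved diameter bound that the paper can only conjecture in Remark~\ref{RemOnOptimalDiam} for lack of a reference. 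Both arguments then feed into Proposition~\ref{ThmRamanujan} in the same way, so your proof is correct as a (partly sketched) alternative justification of the same statement.
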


\begin{proof}
Recall the definitions and formulas for $V_1$ and $V_{q + 1}$ from Theorem~\ref{ThmQuotStr}.  
If $\one(R) = 1$, we can choose a degree $1$ place $\Fp_0 \in R$. If not we choose an arbitray
degree $1$ prime $\Fp_0$. Let $\Gamma(\Fp_0)$ be the full level $\Fp_0$ congruence
subgroup in $\Gamma$. By \cite[Thm.~1.2]{Lubotzky2} we know that $\tilde \MCG := (\Gamma \cap \Gamma(\Fp_0)) \backslash \MCT$ 
is a Ramanujan graph. Observe that $(\Gamma \cap \Gamma(\Fp_0)) \backslash \Gamma \cong \BF^*_{q^2}$ if $\Fp_0 \in R$, which 
has cardinality $q^2 - 1$, and $(\Gamma \cap \Gamma(\Fp_0)) \backslash \Gamma \cong \GL_2(\BFq)$ otherwise, which has cardinality
$\one(R)(q^2 - 1)$.
By analyzing the growth of the stabilizers from $\Gamma \cap \Gamma(\Fp_0)$ to $\Gamma$, we observe that
$$\frac{1}{\one(R)} \# \Ver(\tilde \MCG) = V_1 + (q + 1) V_{q + 1}$$ $$=  
\Big(V_1 + \frac{q + 1}{q - 1}V_1 + \frac{2(q + 1)}{q - 1}(g(R) - 1)\Big)
=\frac{2}{(q - 1)^2} \prod_{\Fp \in R} (q_\Fp - 1).$$
\end{proof}

\begin{Prop}\label{PropDiameter}
Suppose $\Ver(\Gamma \backslash \MCT)\ge3$. Then 
$$\diam(\Gamma \backslash \MCT) \leq 2 \deg(r) + 2(2 \log_q(2) + 1 - \log_q(q - 1)).$$
\end{Prop}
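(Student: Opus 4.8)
The plan is to combine the covering constructed in Lemma~\ref{LemCovering} with the Ramanujan diameter estimate of Proposition~\ref{ThmRamanujan}, and then to bound the number of vertices of that cover crudely in terms of $\deg(r)$.

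First I would record the elementary observation that a covering of connected graphs cannot decrease the diameter. Write $\MCG=\Gamma\backslash\MCT$ and let $p\colon\tilde\MCG\to\MCG$ be the covering of Lemma~\ref{LemCovering}, with $\tilde\MCG$ connected (being a quotient of the tree $\MCT$). Given vertices $u,v$ of $\MCG$, lift $u$ to a vertex $\tilde u$ of $\tilde\MCG$; since $\tilde\MCG$ is connected, there is a path from $\tilde u$ to some vertex in $p^{-1}(v)$ of length at most $\diam(\tilde\MCG)$, and projecting it under $p$ yields a walk from $u$ to $v$ of the same length, so $d_\MCG(u,v)\le\diam(\tilde\MCG)$. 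Hence $\diam(\Gamma\backslash\MCT)\le\diam(\tilde\MCG)$. Moreover $p$ is surjective on vertices, so $\#\Ver(\tilde\MCG)\ge\#\Ver(\Gamma\backslash\MCT)\ge3$ by hypothesis, and Proposition~\ref{ThmRamanujan} applies to the $(q+1)$-regular Ramanujan graph $\tilde\MCG$ with $n:=\#\Ver(\tilde\MCG)$, giving $\diam(\Gamma\backslash\MCT)\le\log_q(4n^2)$.

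Next I would estimate $n$. By Lemma~\ref{LemCovering}, $n=\frac{2\,\one(R)}{(q-1)^2}\prod_{\Fp\in R}(q_\Fp-1)$. Using $q_\Fp-1<q_\Fp=q^{\deg\Fp}$ together with $\sum_{\Fp\in R}\deg\Fp=\deg(r)$ gives $\prod_{\Fp\in R}(q_\Fp-1)<q^{\deg(r)}$, so $n<\frac{2\,\one(R)}{(q-1)^2}q^{\deg(r)}$. Since $\one(R)\in\{1,\,q(q-1)\}$, in either case $\one(R)\le q(q-1)$, hence $\log_q\one(R)\le1+\log_q(q-1)$. Plugging these into $\log_q(4n^2)=2\log_q2+2\log_q n$ and collecting terms yields
$$\diam(\Gamma\backslash\MCT)<2\deg(r)+4\log_q2+2\log_q\one(R)-4\log_q(q-1)\le 2\deg(r)+4\log_q2+2-2\log_q(q-1),$$
and the right-hand side equals $2\deg(r)+2\bigl(2\log_q(2)+1-\log_q(q-1)\bigr)$, as claimed.

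The argument is short, and I do not anticipate a genuine obstacle, since Lemma~\ref{LemCovering} and Proposition~\ref{ThmRamanujan} carry the substantive content. The two points that do need care are: verifying the hypothesis $n\ge3$ of Proposition~\ref{ThmRamanujan}, which follows from surjectivity of the covering and the standing assumption $\#\Ver(\Gamma\backslash\MCT)\ge3$; and checking that the final estimate survives both possible values of $\one(R)$ simultaneously, the case $\one(R)=q(q-1)$ being the tight one in which the inequality $\one(R)\le q(q-1)$ is used with equality.
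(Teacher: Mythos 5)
Your argument is correct and is essentially the paper's own proof: pass to the Ramanujan cover of Lemma~\ref{LemCovering}, note $\diam(\Gamma\backslash\MCT)\le\diam(\tilde\MCG)$, apply Proposition~\ref{ThmRamanujan}, and bound $\#\Ver(\tilde\MCG)$ via $\one(R)\le q(q-1)$ and $\prod_{\Fp\in R}(q_\Fp-1)\le q^{\deg(r)}$. The only difference is that you spell out two points the paper leaves implicit (that a covering cannot decrease the diameter, and that $\#\Ver(\tilde\MCG)\ge3$ so the Ramanujan bound applies), which is a welcome but inessential addition.
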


\begin{proof}
Let $\MCG = \Gamma \backslash \MCT$ and $\MCG^\prime$ be the covering from Lemma~\ref{LemCovering}. Then
\begin{eqnarray*}
 \diam(\MCG) \ \leq \ \diam(\MCG^\prime) &\stackrel{\ref{ThmRamanujan}}\leq &2 \log_q(\#\Ver(\MCG^\prime)) + \log_q(4)\\
&\stackrel{\ref{LemCovering}}\leq &2 \log_q \Big(\frac{2q}{q - 1} \prod_{\Fp \in R} (q_\Fp - 1)\Big) + \log_q(4)\\ 
&\leq& 4 \log_q(2) + 2 \log_q(\frac{q}{q-1}) + 2 \log_q \big(\prod_{\Fp \in R}q_\Fp \big)
\\ &=& 2(2 \log_q(2) + 1 - \log_q(q - 1)) + 2 \deg(r).
\end{eqnarray*}
\end{proof}

\begin{Cor}\label{CorSize}
With $\|\,\|$ as in (\ref{DefOfHeight}), the group $\Gamma$ is generated by the set 
$$\lbrace \gamma \in \Gamma \mid \| \gamma \| \leq \deg(\alpha)/2 + 2 \deg(r) + 2(2 \log_q(2) + 1 - \log_q(q - 1)) \rbrace.$$
\end{Cor}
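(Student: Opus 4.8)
The plan is to combine the generating set from Proposition~\ref{PropGenRel} with the diameter bound from Proposition~\ref{PropDiameter} and the height bound from Theorem~\ref{PropHomCompCharOdd}(b). Recall that by Proposition~\ref{PropGenRel}, $\Gamma$ is generated by the element $g_0$ (a scalar matrix with entries in $\BF_q^*$, hence of height $0$), the generators $g_v$ of $\Stab_\Gamma(v)$ for terminal vertices $v$ of $\CS$, and the edge-label elements $g_e$ for $e\in\EP$. It therefore suffices to bound $\|g_v\|$ and $\|g_e\|$ by the quantity on the right-hand side.

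First I would bound $\|g_e\|$ for $e\in\EP$. Each such $e=(v,v')$ has its origin $v$ in $\CS$ and the associated $g_e$ satisfies $g_e\in\Hom_\Gamma(v',w)$ for some vertex $w$ of $\CS$. Since $\CS$ maps onto a spanning tree of $\Gamma\backslash\CT$ inside the graph of diameter at most $\diam(\Gamma\backslash\CT)$, one can arrange (by choosing the initial vertex $v_0$ of Algorithm~\ref{AlgQuotGraphNew} appropriately, or simply by the construction, since $\CS$ is built up geodesically from $v_0$) that both $v'$ and $w$ are within distance $\diam(\Gamma\backslash\CT)$ of $v_0$; more carefully, using the remark after Theorem~\ref{PropHomCompCharOdd} one takes $v_0$ to be the midpoint of the geodesic from $v'$ to $w$, so that both have distance $d(v',w)/2\le \diam(\Gamma\backslash\CT)/2$ from $v_0$. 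Then Theorem~\ref{PropHomCompCharOdd}(b) with $n=\diam(\Gamma\backslash\CT)$ gives $\|g_e\|\le \diam(\Gamma\backslash\CT)+\deg(\alpha)/2$. Plugging in the bound $\diam(\Gamma\backslash\CT)\le 2\deg(r)+2(2\log_q(2)+1-\log_q(q-1))$ from Proposition~\ref{PropDiameter} yields exactly the claimed bound (note the estimate is even slightly wasteful, since one could use $d(v',w)/2$ rather than the full diameter).

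Next I would handle the terminal-vertex generators $g_v$. If $v$ is terminal in $\CS$, then $\Stab_\Gamma(v)\cong\BF_{q^2}^*$, so $g_v$ can be taken in $\Hom_\Gamma(v,v)=\End_\Gamma(v)$; applying Theorem~\ref{PropHomCompCharOdd}(b) with $v'=v$ and $n=d(v_0,v)\le\diam(\Gamma\backslash\CT)$ gives $\|g_v\|\le \diam(\Gamma\backslash\CT)+\deg(\alpha)/2$, the same bound. Finally $\|g_0\|=0$ trivially. Combining the three cases, every element of the generating set of Proposition~\ref{PropGenRel} has height at most $\deg(\alpha)/2+\diam(\Gamma\backslash\CT)$, and Proposition~\ref{PropDiameter} finishes the proof.

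I expect the main obstacle to be purely bookkeeping: one must be careful that the hypothesis $\Ver(\Gamma\backslash\CT)\ge 3$ of Proposition~\ref{PropDiameter} is in force (the degenerate two-vertex case handled in Step~1 of Algorithm~\ref{AlgQuotGraphNew} must be addressed separately, but there $\Gamma$ is generated by $g_0$ and a single element of small height coming from the stabilizer of a terminal vertex, so the bound holds trivially), and that the reference vertex used when invoking Theorem~\ref{PropHomCompCharOdd}(b) is chosen so that the relevant vertices are within the stated distance. No genuinely hard estimate is needed beyond what is already assembled in the preceding results.
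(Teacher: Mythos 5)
Your proof is correct and follows essentially the same route as the paper: the generators from Proposition~\ref{PropGenRel} are exactly the vertex and edge labels produced by Algorithm~\ref{AlgQuotGraphNew}, their heights are bounded by Theorem~\ref{PropHomCompCharOdd}(b) with $n$ equal to the distance from the initial vertex, hence $n\le\diam(\Gamma\backslash\CT)$, and Proposition~\ref{PropDiameter} then yields the stated bound. The only caveat is your parenthetical ``midpoint'' refinement: Theorem~\ref{PropHomCompCharOdd}(b) is stated and proved for the fixed reference vertex $[L(0,0)]$ with the height measured in the fixed $A$-basis of $\Lambda$ (the remark following that theorem concerns only the complexity, not the height), so shifting the reference vertex does not obviously give the sharper bound $d(v',w)/2+\deg(\alpha)/2$ --- but this aside is not needed for the corollary.
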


\begin{proof}
By Proposition~\ref{PropGenRel}, the group $\Gamma$ is generated by the vertex and edge labels of the quotient graph from
Algorithm~\ref{AlgQuotGraphNew}. By Proposition~\ref{PropHomCompCharOdd} these labels $g_t$ have norm 
$\| g_t \| \leq \deg(\alpha)/2 + n$, where $n$ is the distance in $\Gamma \backslash \mathcal T$ between the 
initial vertex and the labeled vertex. In particular, $n\le \diam(\Gamma \backslash \MCT)$.  
\end{proof}

\begin{Rem}\label{RemOnOptimalDiam}
 If $\one(R) = 1$, we can obviously subtract $2 + 2 \log_q(q - 1)$ from the diameter in Proposition~\ref{PropDiameter} and subsequently from the
bounds in Corollary~\ref{CorSize}. In the other case we expect this to be possible as well. This should follow by replacing 
$\Gamma(\Fp_0)$ by $$\tilde \Gamma_1(\Fp_0) := \lbrace \gamma \in \Gamma \mid \gamma \equiv 
\begin{pmatrix} 1 & \star \\ 0 & \star \end{pmatrix} \pmod {\Fp_0} \text{ in } \GL_2(\BFq) \rbrace.$$
Unfortunately we could not find this analog of \cite[Thm.~1.2]{Lubotzky2} for a congruence
subgroup other than $\Gamma(\Fp)$ in the literature although it seems likely to hold. 

If this was indeed true, we would obtain the improved bound 
$$\diam(\Gamma \backslash \MCT) \leq 2 \deg(r) + 4 \log_q(2) - 4 \log_q(q - 1).$$ 
For $q > 19$ it gives $\diam(\Gamma \backslash \MCT) \leq 2 \deg(r) - 4$. 
The nice feature of this last bound is that it was assumed in many concrete examples that we have computed.
\end{Rem}

\begin{Prop}
 Algorithm~\ref{AlgQuotGraphNew} computes the quotient graph $\Gamma \backslash \MCT$ in time 
$$\MCO((\#\Ver(\Gamma \backslash \MCT))^2 \diam(\Gamma \backslash \MCT)^5)\stackrel{\ref{ThmQuotStr}}=\CO(q^{2\deg(r)-6}\cdot\deg(r)^5)$$
in terms of operations over $\BFq$.
\end{Prop}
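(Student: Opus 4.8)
The plan is to bound the total work of Algorithm~\ref{AlgQuotGraphNew} by walking through its loop structure and charging each basic operation to a call of the subroutine computing $\Hom_\Gamma$ or $\End_\Gamma$, whose cost is controlled by Theorem~\ref{PropHomCompCharOdd}. The crucial structural fact is that every vertex and edge produced by the algorithm lies at distance at most $\diam(\Gamma\backslash\CT)$ from the initial vertex $v_0=[L(0,0)]$ (Theorem~\ref{FirstAlgoSucceeds} shows the algorithm builds exactly $\Gamma\backslash\CT$, and the breadth-first nature of the outer \texttt{while}-loop means each newly examined vertex $v'$ sits on a geodesic from $v_0$ of length at most the diameter). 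Writing $N:=\#\Ver(\Gamma\backslash\CT)$ and $\Delta:=\diam(\Gamma\backslash\CT)$, by Theorem~\ref{ThmQuotStr} we have $N\sim q^{\deg(r)-3}$ (up to a bounded constant depending only on the degrees of the ramified primes), and by Proposition~\ref{PropDiameter} we have $\Delta\le 2\deg(r)+O(1)$.

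The key steps, in order: (1) Each vertex of $\Gamma\backslash\CT$ is discovered exactly once, and when it is first reached as the target $v'$ of some edge $e$, the algorithm calls $\End_\Gamma(v')$ once (step (c)(i)2) — so there are $O(N)$ such calls, each costing $O(\Delta^4)$ field operations by Theorem~\ref{PropHomCompCharOdd}(a) since $v'$ has distance at most $\Delta$ from $v_0$. (2) The expensive part is step (c)(i)4: for a projectively unstable(-looking) new vertex $v'$ on the current level, one loops over all earlier entries $e'=(w,w')$ of the current list $L$ and computes $\Hom_\Gamma(v',w')$. The list $L$ at any point in the \texttt{while}-loop consists of edges emanating from vertices of one ``sphere'' around $v_0$; crudely, $\#L\le (q+1)N=O(N)$ over the whole run, but within a single level the relevant bound is that the number of pairs $(v',e')$ examined is $O((\#L_{\text{level}})^2)$, summing to $O(N^2)$ over all levels (the number of $\Hom$-computations is at most quadratic in the number of edges, which is $O(N)$). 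Each such $\Hom_\Gamma(v',w')$ call again costs $O(\Delta^4)$. (3) The per-edge bookkeeping — adding edges and their opposites to $\CG$, attaching labels, removing items from $L$ and $L'$, recomputing degrees, and computing vertex normal forms $\vnf(g_ev)$ of the loose vertices (of height $O(\Delta\cdot\deg(r))$ by Corollary~\ref{CorSize} and Theorem~\ref{PropHomCompCharOdd}(b)) — costs at most $O(\Delta^2\deg(r)^2)=O(\Delta^4)$ per edge, hence $O(N\Delta^4)$ total, which is dominated. (4) Combining, the dominant term is $O(N^2\Delta^4)$ from the pairwise $\Hom$-computations. I would then note that the $n'=O(\Delta+\deg(r))=O(\Delta)$ appearing in the proof of Theorem~\ref{PropHomCompCharOdd} together with $\Delta=\Theta(\deg(r))$ forces one further factor of $\Delta$ when one tracks the field-operation count honestly rather than via the structural-constant convention — i.e.\ the Gauss elimination in that proof is $O(n'^2)$ steps of $O(n'^2)$ operations, so $O(\Delta^4)$, but charging the outer $\#L$-loop overhead and normal-form reductions bumps the exponent on $\Delta$ from $4$ to $5$ — giving the stated $O(N^2\Delta^5)$. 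Finally substitute $N=O(q^{\deg(r)-3})$ and $\Delta=O(\deg(r))$ to get $O(q^{2\deg(r)-6}\deg(r)^5)$, matching the displayed bound with the cited references.

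**The main obstacle** I anticipate is pinning down the factor $\Delta^5$ rather than $\Delta^4$: one must be careful about \emph{which} $n$ enters Theorem~\ref{PropHomCompCharOdd} for each $\Hom$-call and whether the $\#L$-loop genuinely contributes only an extra linear (not quadratic) factor after the edge-counting argument is applied. Concretely, the honest accounting is: (number of $\Hom$-calls) $\times$ (cost per call) $+$ (number of normal-form computations) $\times$ (their cost); the first product is $O(N)\cdot O(\Delta^4)$ if one argues that only $O(N)$ pairs survive (because once $\degree_\CG(w')=q+1$ the vertex is dropped from $L$, so each target vertex absorbs at most $q+1$ successful pairings and the failed pairings are bounded by the total edge count), and the second is $O(N)\cdot O(\Delta^2\deg(r)^2)$. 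Naively this is only $O(N\Delta^4)$, not $O(N^2\Delta^5)$ — so the extra $N$ and the extra $\Delta$ must come from the worst-case scenario where within a single level \emph{every} pair $(v',e')$ must be tested before a match is found, which is genuinely $O((\#L_{\text{lev}})^2)$ per level and $O(N^2)$ overall, and from using the crude bound $n'=O(\Delta)$ uniformly. I would therefore present the proof via these two coarse bounds — $O(N^2)$ total $\Hom$-computations, each $O(\Delta^4)$ by Theorem~\ref{PropHomCompCharOdd}, with the normal-form overhead subsumed — stating that this yields $O(N^2\Delta^5)$ after absorbing the logarithmic-to-polynomial conversions, and then cite Theorem~\ref{ThmQuotStr} and Proposition~\ref{PropDiameter} for the final closed form. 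The slightly loose exponent on $\Delta$ is acceptable since it is anyway dominated by the $N^2$ factor once $\Delta=\Theta(\log_q N)$.
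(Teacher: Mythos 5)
Your proposal is correct and takes essentially the paper's approach: each comparison of vertices (a $\Hom_\Gamma$ or $\End_\Gamma$ computation) costs $\MCO(\diam(\Gamma\backslash\MCT)^4)$ by Theorem~\ref{PropHomCompCharOdd}, the total number of comparisons is bounded quadratically in $\#\Ver(\Gamma\backslash\MCT)$, and the closed form follows by substituting Theorem~\ref{ThmQuotStr} and Proposition~\ref{PropDiameter}. The only divergence is bookkeeping: the paper gets the fifth power of the diameter simply by allowing $(\#\Ver(\Gamma\backslash\MCT))^2$ comparisons in each of at most $\diam(\Gamma\backslash\MCT)$ steps of the while-loop, so your somewhat strained attempt to ``recover'' an extra diameter factor from loop overhead and normal-form costs is unnecessary --- your tighter count of $\MCO((\#\Ver(\Gamma\backslash\MCT))^2)$ total $\Hom$-calls already yields $\MCO((\#\Ver(\Gamma\backslash\MCT))^2\diam(\Gamma\backslash\MCT)^4)$, which implies the stated bound.
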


\begin{proof}
 According to Prop~\ref{PropHomCompCharOdd}, comparing two vertices in the algorithm can be done in time $\MCO(n^4)$,
where $n$ is always less or equal then $\diam(\Gamma \backslash \MCT)$. The list of vertices in each step of the algorithm
is always shorter than the cardinality of $\Ver(\Gamma \backslash \MCT)$, so in each step the number of
comparisons is bounded by $(\# \Ver(\Gamma \backslash \MCT))^2$. The number of steps is bounded by $\diam(\Gamma \backslash \MCT)$ and the result follows.
\end{proof}

\end{document}